\newcommand{\QQ}{\mathbb{Q}}
\newcommand{\CC}{\mathbb{C}}
\newcommand{\PP}{\mathbb{P}}
\newcommand{\GL}{\mathop{\rm GL}\nolimits}
\newcommand{\OO}{\mathcal{O}}
\newcommand{\Spec}{\mathop{\rm Spec}}
\newcommand{\Aut}{\mathop{\rm Aut}\nolimits}
\newcommand{\Char}{\mathop{\rm char}\nolimits}
\newcommand{\M}{\mathcal M}
\newcommand{\X}{\mathcal X}
\newcommand{\Y}{\mathcal Y}
\newcommand{\Kb}{\overline{K}}
\newcommand{\Yb}{\overline{Y}}
\newcommand{\Xb}{\overline{X}}
\newcommand{\Db}{\overline{D}}
\newcommand{\fb}{\overline{f}}
\newcommand{\Hb}{\overline{\mathcal H}}
\newcommand{\Mb}{\overline{\mathcal M}}
\newcommand{\Iiii}{I_3}
\newcommand{\Ivi}{I_6}
\newcommand{\one}{$1$}
\newcommand{\two}{$2$}
\newcommand{\three}{$3$}
\def\SageMath{SageMath~\cite{SageMath}}
\begin{document}

\title*{Reduction types  of genus-3 curves in a special stratum of their
	moduli  space}
\author{Irene Bouw, Nirvana Coppola, P{\i}nar K{\i}l{\i}\c{c}er, Sabrina Kunzweiler, Elisa Lorenzo Garc\'ia and Anna Somoza}

\authorrunning{I. Bouw, N. Coppola, P. K{\i}l{\i}\c{c}er, S. Kunzweiler, E. Lorenzo Garc\'ia and A. Somoza}

\institute{Irene Bouw \at Universität Ulm, Institut für Reine Mathematik, D-89081 Ulm, Germany, \email{irene.bouw@uni-ulm.de} \and Nirvana Coppola \at University of Bristol, School of Mathematics, BS8 1UG Bristol, United Kingdom, \email{nc17051@bristol.ac.uk} \and P{\i}nar K{\i}l{\i}\c{c}er \at Bernoulli Institute for Mathematics, Computer 
  Science and Artificial Intelligence, 9747 AG Groningen, Netherlands, \email{p.kilicer@rug.nl}
\and  Sabrina Kunzweiler \at {Universität Ulm, Institut für Reine Mathematik, 
  D-89081 Ulm, Germany,} \email{sabrina.kunzweiler@uni-ulm.de}
\and  Elisa Lorenzo Garc\'ia \at {Univ Rennes, CNRS, IRMAR - UMR 6625, F-35000
  Rennes, France.\\ Institut de Math\'ematiques, Universit\'e de Neuch\^atel, Rue Emile-Argand 11, 2000, Neuch\^atel, Switzerland,} \email{elisa.lorenzogarcia@univ-rennes1.fr} \and  Anna Somoza \at {Univ Rennes, CNRS, IRMAR - UMR 6625, F-35000 Rennes, France,} \email{anna.somoza@univ-rennes1.fr}
}

\maketitle

\abstract{
We study a $3$-dimensional stratum $\M_{3,V}$ of the moduli space
$\M_3$ of curves of genus $3$ parameterizing curves $Y$ that admit a
certain action of $V = C_2\times C_2$. We determine the possible
types of the stable reduction of these curves to characteristic
different from $2$. We define invariants for $\M_{3,V}$ and
characterize the occurrence of each of the reduction types in terms of 
them. We also calculate the $j$-invariant (respectively~the Igusa invariants)
of the irreducible components of positive genus of the stable
reduction~$Y$ in terms of the invariants.
}
\keywords{Plane quartic curves, Dixmier--Ohno invariants, stable reduction}

\noindent{\em 2010 Mathematics Subject Classification.}{14H10 (primary); 14H50, 14H25, 11G20, 14Q05 (secondary).}

\section{Introduction}

Let $(K, \nu)$ be a discrete valuation field, $\OO$ its ring of
integers, and $k$ its residue field. Let~$Y$ be a smooth projective
and absolutely irreducible curve over $K$ of genus $g(Y)\geq 1$.  A
theorem of Deligne--Mumford \cite{DM} states that after replacing
$K$ by a finite extension there exists a semistable model
$\mathcal{Y}$ over $\Spec(\OO)$ (see Definition~\ref{def:goodred}). If
$g(Y)\geq 2$ there exists a unique minimal semistable model, which we
call the \emph{stable model}.  Its special fiber $\Yb$ is called
the \emph{stable reduction} of $Y$.  For fixed genus $g(Y)\geq 2$ there
are only finitely many possibilities for the \emph{reduction type},
i.e.,~the graph of irreducible components of $\Yb$ together with the
genus of the normalization of each irreducible component. If $\Yb$ is smooth we say that $Y$ has \emph{potentially good reduction}.

In the case that $g(Y)=1$ a minimal semistable model does not need to
be unique, but the reduction type does not depend on the choice of a
minimal semistable model. It is determined by the $j$-invariant $j(Y)$
of $Y$. Namely, the special fiber $\Yb$ of any minimal semistable
model of $Y$ is smooth if and only if the valuation $\nu(j(Y))$ is
non-negative and $\Yb$ is a projective line that intersects itself in
one point (multiplicative reduction) otherwise. (See for
example \cite[Chapter VII, Prop.~5.5]{Silverman}).

In \cite[Th\'eor\`eme 1]{Liu} Liu has generalized this result to
curves of genus $2$. He determines the reduction types in terms of the
Igusa invariants. Moreover, Liu gives an expression for the
$j$-invariant of the irreducible components of positive genus in terms
of the Igusa invariants in the case that $\Yb$ is not smooth. (If
$\Yb$ is singular, then the irreducible components of positive genus are
necessarily elliptic curves.)

Smooth projective curves of genus $3$ are either hyperelliptic or
plane quartics, where the latter form a dense open subset of the
moduli space $\M_3$ of curves of genus~$3$.  The Dixmier--Ohno
invariants for smooth quartics form a set of parameters on the moduli space, similar to the Igusa
invariants for curves of genus $2$ and the $j$-invariant for elliptic
curves.  The Dixmier--Ohno invariants do not extend to the locus of
smooth hyperelliptic curves, and one should consider the locus in
$\M_3$ of smooth hyperelliptic curves as a part of the boundary of the
locus of smooth quartics in the Deligne--Mumford compactification
$\overline{\M_3}$.  There  exists a different set of invariants, called the
Shioda invariants, for smooth hyperelliptic curves of genus $3$.

It is natural to ask whether one can characterize the reduction types
of smooth plane quartics in terms of the Dixmier--Ohno invariants.
The first result in this direction is proved in \cite{LLLR}. The
authors of loc.~cit.~characterize when the stable reduction $\Yb$ of a smooth plane quartic
is a smooth hyperelliptic curve in terms of the Dixmier--Ohno
invariants. The authors of \cite{LLLR} also
compute the Shioda invariants of the smooth hyperelliptic curve $\Yb$.

An alternative direction to generalize the result of Liu is to
consider families of superelliptic curves, which are cyclic covers
$f:Y\to \PP^1_K$ of the projective line. In the case that either
$\Char(k)\nmid \deg(f)$ or $\Char(k)=\deg(f)=p$ there exist algorithms
to compute the stable reduction $\Yb$ (see for example \cite{BW} for
the general case or \cite{DDMM}  for hyperelliptic curves). However,
this approach does not yield a natural interpretation in terms of the
Dixmier--Ohno invariants. An interesting special case is the case of
Picard curves, which are both plane quartics and superelliptic
curves. In this case the possible reduction types can be found
in \cite{BBW} and \cite{BKSW}. The case of Picard curves with
potentially good reduction in terms of the Dixmier--Ohno invariants is
described in \cite[Section 4.2]{LLLR}.

The motivating question for the current paper is whether it is
possible to characterize the reduction type of a smooth plane quartic in
terms of the Dixmier--Ohno invariants also in the case that the stable
reduction $\Yb$ is not smooth. Due to the many different reduction
types we restrict to a specific $3$-dimensional stratum in
$\M_3$. In this paper we determine all possible reduction types
for this stratum and prove a result analogous to Liu's result for
curves of genus $2$. 

\bigskip
We now describe our results in more detail. In the rest of the
introduction we assume that both $K$ and $k$ have characteristic
different from $2$.  We consider the stratum $\M_{3,V}\subset \M_3$
consisting of smooth curves $Y/K$ of genus $3$ such that
$\Aut_{\Kb}(Y)$ contains a subgroup isomorphic to $V = C_2\times C_2$ with
$g(Y/V)=0$ and such that all degree-$2$ subcovers of $Y\to X:=Y/V$
have genus $1$.

The study of these curves goes back to Ciani in 1899
\cite{ciani}. They are sometimes called Ciani surfaces in his
honor. More recent references are \cite{LR}
and \cite{HLP}. In \cite{HLP} the authors find curves from this
family with many rational points and small conductor (see also
Example~\ref{exa:HLP}).

A dense open set of $\M_{3,V}$ parametrizes smooth plane quartics with an
action of the Klein $4$-group $V$.  Over a sufficiently large
field, these curves may be described by an explicit
quartic equation in standard form (see Lemma~\ref{lem:quartic_eq}),
and one can express the Dixmier--Ohno invariants in terms of the
coefficients of this equation. In Section~\ref{sec:invariants_quartic}
we replace the full set of
Dixmier--Ohno invariants by a smaller set $I_3, I_3', I_3'',
I_6$ of invariants, which are easier to handle in our
set-up. They may be expressed in terms of the Dixmier--Ohno
invariants (see \cite{Coppola}). Our main results are formulated in terms of these invariants in Section~\ref{sec:main_results}.

All curves $Y$ parametrized by $\M_{3,V}$ admit a $V$-Galois cover
$f:Y\to \PP^1_K$. This allows us to calculate the possible reduction
types by extending the method of \cite{BW} to our situation.  The key
ideas of this method in our set-up are explained in
Sections~\ref{sec:stable}---\ref{sec:compute}.

Combining these two approaches, we show in Theorem~\ref{FromGraphtoRedType} that there are exactly $13$
reduction types for curves parametrized by $\M_{3,V}$. The different types are
illustrated in Appendix~B. Proposition~\ref{prop:potgoodredquartics} characterizes potentially good quartic reduction. The analogous result in the case of potentially good hyperelliptic reduction can be found in  Proposition~\ref{prop:goodhyp}.
In Theorems~\ref{THM:Main_NonDegConic}
and~\ref{THM:Main_DegConic} we give explicit conditions in
terms of the invariants $I_3, I_3', I_3'', I_6$ characterizing the
different reduction types.  Moreover, we determine the Igusa
invariants (respectively~the $j$-invariant) of the irreducible components of
positive genus in the case that $\Yb$ is singular
(see the proofs in Section~\ref{sec:main_proofs}). In Section~\ref{Sec:Hyper} we discuss the case
where the curve $Y$ is hyperelliptic. Theorem~\ref{THM:hyperelliptic} is the result analogous to Theorems~\ref{THM:Main_NonDegConic}
and~\ref{THM:Main_DegConic} in the hyperelliptic case, and it is
phrased in terms of a modified set of invariants described in Section~\ref{SSec:invariants}. In \cite{Coppola}, an implementation in \SageMath~of the results in Theorems~\ref{THM:Main_NonDegConic}
and~\ref{THM:Main_DegConic} is given. It takes as input the coefficients of the plane quartic and outputs the reduction type.

\begin{acknowledgement}
	This project began at the Women in Numbers Europe 3 workshop in Rennes, August 2019. We are grateful to the organizers for bringing us together and providing us with an excellent working environment to get this project underway. We thank Christophe Ritzenthaler for his ideas for the proofs of Propositions \ref{Prop:Invariants_Generate}, \ref{prop:potgoodredquartics}, \ref{prop:invhyp} and \ref{prop:goodhyp}. We thank Raymond van Bommel and Andreas Pieper for pointing out a mistake in Table \ref{tab:correspondence} of the published version of this paper.
\end{acknowledgement}

\subsection{Notation} \(\,\)
\begin{longtable}{r@{\hspace*{1em}}p{.8\textwidth}}
	\hline
	\textbf{Symbol} & \textbf{Meaning and place of definition} \\
	\hline
	\endfirsthead
	\multicolumn{2}{c}%
	{\textit{Continued from previous page}} \\
	\hline
	\textbf{Symbol} & \textbf{Meaning and place of definition} \\
	\hline
	\endhead
	\hline \multicolumn{2}{c}{\textit{Continued on next page}} \\
	\endfoot
	\hline
	\endlastfoot
	\centering
	$K$            & a complete discrete valuation field of characteristic $\neq 2$                        \\
	$\nu$           & a valuation of $K$   \\
	$\Kb$           & an algebraic closure of $K$                                                         \\
	$\OO$           & the ring of integers of $K$ with uniformizer $\pi$  
	\\
	$k$       & the residue field of $\OO$ of characteristic $\neq 2$                               \\
	$V$            & Klein 4-group $C_2 \times C_2$                                                      \\
	$Y$            & a smooth curve of genus $3$ defined over $K$                                        \\
	$X$            & a conic defined over $K$                                                            \\
	$\Delta(C)$        & the discriminant of a plane curve $C$                                               \\
	$\mathcal{Y}$  & a model of $Y$           \\
	$\mathcal{X}$  & a model of $X$           \\
	$\Yb$           & the special fiber of $\mathcal{Y}$                                        \\
	$\Xb$           & the special fiber of $\mathcal{X}$                                        \\
	$g$            & the genus of a smooth projective curve                                              \\
	$\M_{g}$         & the moduli space of smooth projective curves of genus $g$                           \\
	$\Mb_{g}$         & the Deligne--Mumford compactification of $\M_g$                                      \\
	$\M_{3,V}$        & the stratum of genus $3$ curves in $\M_3$ such that $\Aut_{\Kb}(Y)$ contains a 
	subgroup isomorphic to $V$ with $g(Y/V)=0$ and such that all degree-$2$ 
	subcovers have genus $1$ (\S~\ref{Ssec:setup})                                            			   \\
	$\Mb_{3, V}$       & the closure of $\M_{3,V}$ in $\Mb_{3}$  (\S~\ref{Ssec:setup})                                                                                                                                       \\
	$\M_{3,V}^{\text{hyp}}$  & the hyperelliptic stratum in  $\M_{3,V}$ (\S~\ref{Ssec:setup})                                                                                         \\
	$\M_{3,V}^{\text{quar}}$ & the plane quartic locus in  $\M_{3,V}$ (\S~\ref{Ssec:setup})                                                                                         \\ 
\end{longtable}
\addtocounter{table}{-1}

\section{Stable reduction and admissible covers}

\subsection{The set-up}\label{Ssec:setup} Let $K$ be a discrete valuation field of characteristic $p\geq0$  different from $2$. Let $\Kb$ be a fixed algebraic closure of $K$.  We denote by $\M_g$ the moduli space of smooth projective
curves of genus $g$ defined over~$K$.  In this paper we consider the
locus $\M_{3, V}$ of curves of genus $3$ such that $\Aut_{\Kb}(Y)$
contains a subgroup isomorphic to the Klein $4$-group $V =
C_2\times C_2$ such that $X:=Y/V$ has genus~$0$ and each of the
intermediate covers of degree $2$ has genus $1$. Since we assume
$\Char(K)\neq 2$, the cover $f:Y\to X$ is tamely ramified.  The
following lemma states some elementary facts on such covers.

\begin{lemma}\label{lem:Galois} Let $Y/\Kb$ be a smooth projective
	curve of genus $3$, and $f:Y\to X\simeq \PP^1_{\Kb}$ a Galois cover
	with Galois group isomorphic to $V$ such that all intermediate covers
	of degree $2$ have genus~$1$.
	\begin{enumerate}[(1)]
		\item  Every element of order~$2$ in $V$ generates the inertia group of exactly $2$ branch points of~$f$. 
		\item If $Y$ is hyperelliptic, then $\Aut_{\Kb}(Y)$ contains a subgroup $A\simeq (C_2)^3$. 
		The hyperelliptic involution $\iota$ acts on $X$
		by interchanging the two branch points of $f$ with the same inertia
		generator. The cover $Y\to Y/A$ of degree $8$ is branched at $5$
		points. Two of these have inertia generator $\iota$, the other three
		one of the elements of order~$2$ contained in a unique subgroup
		$V\subset A$.
		\item The stratum  $\M_{3,V}$ is irreducible and of dimension $3$. The
		hyperelliptic curves form an irreducible substratum
		$\M_{3,V}^{\emph{hyp}}$ of dimension $2$. 
	\end{enumerate}
\end{lemma}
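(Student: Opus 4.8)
The plan is to treat the three parts in sequence, the key uniform input being that $\Char(\Kb)\neq 2$ makes every cover by a $2$-power-order subgroup of $\Aut_{\Kb}(Y)$ tame, so that all inertia groups are cyclic. For (1): since $V$ has exponent $2$ and is not cyclic, the (cyclic) inertia group at each branch point of $f$ is one of the three order-$2$ subgroups $\langle\sigma_i\rangle$, $i=1,2,3$. Writing $b_i$ for the number of branch points with inertia generator $\sigma_i$, I would apply Riemann--Hurwitz to $f$ (degree $4$, $g(Y)=3$, $g(X)=0$), where every branch point contributes $\tfrac{4}{2}(2-1)=2$, to obtain $b_1+b_2+b_3=6$. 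Applying it to each intermediate elliptic quotient $q_i\colon Y_i=Y/\langle\sigma_i\rangle\to X$ (degree $2$, $g(Y_i)=1$, $g(X)=0$) gives exactly $4$ branch points for $q_i$. Since $q_i$ ramifies precisely over those branch points of $f$ whose inertia is \emph{not} $\langle\sigma_i\rangle$, this reads $b_j+b_k=4$ for $\{i,j,k\}=\{1,2,3\}$, and solving this linear system forces $b_1=b_2=b_3=2$.

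For (2): the hyperelliptic involution $\iota$ is central in $\Aut_{\Kb}(Y)$, and $\iota\notin V$ because $Y/\langle\iota\rangle\cong\PP^1$ whereas every $Y/\langle\sigma_i\rangle$ is elliptic; hence $A:=\langle V,\iota\rangle=V\times\langle\iota\rangle\cong(C_2)^3$. As $\iota$ commutes with $V$ it descends to an involution $\bar\iota$ of $X=Y/V$ permuting the six branch points. The heart of the argument is an orbit--stabilizer analysis of $A$ acting on $Y$, in which tameness forces every point-stabilizer to be cyclic, hence of order $1$ or $2$ since $A$ has exponent $2$: no point can be fixed by two distinct involutions, as that would produce a non-cyclic stabilizer. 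Thus the $8$ Weierstrass points (each fixed by $\iota$) have stabilizer exactly $\langle\iota\rangle$, forming two $A$-orbits of size $4$ and so two branch points of $Y\to Y/A$ with inertia generator $\iota$; and for each $i$ the four fixed points of $\sigma_i$ have stabilizer $\langle\sigma_i\rangle$, forming a single $A$-orbit of size $4$ and so one branch point with inertia generator $\sigma_i$. This yields the $5$ branch points, matching Riemann--Hurwitz for $Y\to Y/A\cong\PP^1$, and identifies their inertia generators as claimed. Finally, the two branch points $Q_i,Q_i'$ of $f$ with inertia $\sigma_i$ are the images in $X$ of these four fixed points, which split as the two $V$-orbits (the fibers over $Q_i$ and $Q_i'$); since $A$ is transitive on the four points, $A/V=\langle\bar\iota\rangle$ is transitive on the two $V$-orbits, so $\bar\iota$ interchanges $Q_i$ and $Q_i'$.

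For (3): I would present $\M_{3,V}$ as a finite quotient of the Hurwitz space of $V$-covers of $\PP^1$ with the ramification data of (1); the branch locus is $6$ points on $\PP^1$, giving coarse dimension $6-\dim\PGL_2=3$, and irreducibility follows either from connectedness of this Hurwitz space or, more concretely, from the explicit rational normal form for such covers in Lemma~\ref{lem:quartic_eq}, whose parameter space is irreducible. For the hyperelliptic substratum, part (2) shows such a $Y$ is the double cover of $\PP^1=Y/\langle\iota\rangle$ branched along its $8$ Weierstrass points, which form two free $V$-orbits disjoint from the fixed points of the induced $V$-action on $\PP^1$; each orbit is determined by a single parameter on $\PP^1/V$, so $\dim\M_{3,V}^{\text{hyp}}=2$, with irreducibility again inherited from the irreducible parameter space.

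The main obstacle is part (2): correctly pinning down the inertia generator of each of the five branch points of $Y\to Y/A$ and, in particular, proving that $\bar\iota$ swaps the paired branch points. The recurring crux, used in every orbit count, is the systematic use of tameness to exclude non-cyclic stabilizers, which is exactly what pins the orbit sizes to $4$ and makes the bookkeeping close up. A secondary difficulty in (3) is supplying a genuinely rigorous irreducibility statement, via Hurwitz-space connectedness or the explicit parametrization, rather than merely a dimension count.
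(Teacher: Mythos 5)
Your proposal is correct and follows essentially the same route as the paper: parts (1) and (2) are exactly the Riemann--Hurwitz computations for $f$ and its degree-$2$ subcovers, respectively for $Y\to Y/A$ and its subcovers, that the paper invokes in one line, with the tameness-forces-cyclic-inertia observation doing the same work in both. For (3) the paper simply cites the literature, and your Hurwitz-space dimension count plus the explicit parametrization is a reasonable expansion of that citation (though, as you note yourself, a fully rigorous irreducibility argument for the whole stratum, including why the hyperelliptic locus sits in the closure of the quartic locus, would still need to be supplied).
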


\begin{proof}
	Statement (1) follows from the Riemann--Hurwitz formula applied to $f$
	and each of its subcovers of degree $2$. Statement (2) follows
	similarly by considering the cover $Y\to Y/A$ and its subcovers.
	Statement (3) is well-known, see e.g.~\cite[Thm.~3.1]{LRRS}. 
\hfill $\qed$
\end{proof}

We denote by $\M_{3,V}^{\text{quar}}=\M_{3,V}\setminus
\M_{3,V}^{\text{hyp}}$ the locus of smooth plane quartics with $V\subset \Aut_{\Kb}(Y)$.  We write $\Mb_3$ for the Deligne--Mumford
compactification of $\M_3$ and $\Mb_{3, V}$ for the closure of
$\M_{3,V}$ in $\Mb_3$. A $\Kb$-point of the boundary of $\Mb_{3, V}$
consists of a stable curve~$\Yb$ of genus $3$ on which $V$ acts
faithfully with a quotient of genus zero.  The goal of this paper is
to study the ``types'' of stable curves occurring in the boundary of~$\Mb_{3, V}$. Rather than working out necessary and sufficient
conditions for a $V$-action on a stable curve $\Yb$ of genus $3$ to
correspond to a point of this boundary, we determine instead the
compactification $\Hb_{3,V}$ of the Hurwitz space parametrizing
$V$-Galois covers $Y\to X\simeq \PP^1_{\Kb}$ as in
Lemma~\ref{lem:Galois}. We refer to \cite{RW} for precise definitions
and properties of Hurwitz spaces. There are different variants, but
for our purposes it is not necessary to specify which one we use.

Lemma~\ref{lem:Galois}.(3) implies that to determine the types of
stable curves occurring in the boundary of $\Mb_{3, V}$ it suffices to
consider the possible degenerations of the non-hyperelliptic
curves of genus~$3$, i.e.,~smooth plane quartics. 

The non-hyperelliptic curves of genus $3$ with non-trivial automorphism
group (over $\mathbb{C}$) have been classified by Vermeulen
\cite{Vermeulen} (see also \cite{Hen}). In \cite{LRRS} it is shown
that this classification also holds in positive characteristic with a
few exceptional cases in small characteristic.  The following result
is a special case of this classification. Actually, one may
additionally assume that the parameters $A, B, C$ are equal to $1$. 

\begin{lemma}\label{lem:quartic_eq} Let $Y/\Kb$ be a smooth plane quartic such 
	that there exists a subgroup $V\subseteq \Aut_{\Kb}(Y)$.
	\begin{enumerate}[(1)]
		\item  Then $Y$ may be defined by an equation
		\begin{equation}\label{eq: F}
		Y:\; F:=Ax^4+By^4+Cz^4+ay^2z^2+bx^2z^2+cx^2y^2 =0
		\end{equation}
		and the non-trivial elements of $V$ act as $ (x:y:z)\mapsto (\pm x:\pm y:z)$.
		\item  The ramification points of $f:Y\to Y/V=:X$ are the points with $xyz=0$.
	\end{enumerate}
\end{lemma}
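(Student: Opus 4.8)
The plan is to prove Lemma~\ref{lem:quartic_eq} in two parts, treating the normal form of the equation and then the location of the ramification points. The key input is that $V \subseteq \Aut_{\Kb}(Y)$ acts on the smooth plane quartic $Y$, and since $Y$ is non-hyperelliptic of genus $3$, its canonical embedding realizes $Y$ as a plane quartic in $\PP^2_{\Kb}$ in an essentially unique way; in particular $\Aut_{\Kb}(Y)$ acts linearly, embedding as a subgroup of $\PGL_3(\Kb)$. So the first step is to understand how a copy of $V = C_2 \times C_2$ can sit inside $\PGL_3(\Kb)$. Since $\Char(\Kb)\neq 2$, any finite abelian group of exponent $2$ acting linearly can be simultaneously diagonalized, so after a choice of coordinates $(x:y:z)$ the three non-trivial involutions of $V$ are represented by commuting diagonal matrices whose entries are $\pm 1$. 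Up to scaling and relabelling coordinates, the only faithful way to realize $C_2\times C_2$ by such diagonal matrices is the action $(x:y:z)\mapsto(\pm x:\pm y:z)$, i.e.\ the three involutions act as $(-x:y:z)$, $(x:-y:z)$ and $(-x:-y:z)$.

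Second, I would impose invariance of the defining quartic form $F$ under this action. Writing $F = \sum_{i+j+k=4} \lambda_{ijk}\, x^i y^j z^k$, the condition that $F$ be fixed (up to scalar, but a scalar $\pm1$ here) by each of the three sign-change involutions forces every monomial with an odd exponent in $x$ or in $y$ to vanish. The surviving monomials are exactly those in which both the $x$-degree and the $y$-degree are even, and since the total degree is $4$, these are $x^4, y^4, z^4, y^2z^2, x^2z^2, x^2y^2$. This yields precisely the shape of \eqref{eq: F}, with coefficients $A,B,C,a,b,c$. This establishes part~(1); the remark that one may further normalize $A=B=C=1$ comes from rescaling the coordinates $x,y,z$ by suitable scalars (possible after passing to a finite extension, which is harmless since we work over $\Kb$), but this is not needed for the statement as phrased.

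For part~(2), I would identify the quotient map $f\colon Y\to X=Y/V$ with the map to the coordinate ring of $V$-invariants. The invariants of the diagonal action are generated by $u=x^2, v=y^2, w=z^2$ (of weight the squares), so $X\simeq\PP^1$ can be coordinatized via these, and the ramification of $f$ occurs exactly where the stabilizer of a point of $Y$ in $V$ is non-trivial. A point $(x:y:z)\in Y$ is fixed by some non-trivial involution precisely when one of its coordinates is forced to $0$: the involution $(-x:y:z)$ fixes the locus $x=0$ (together with the isolated point where $y=z=0$), and similarly for the others. Hence the ramification locus of $f$ is contained in $\{xyz=0\}\cap Y$; conversely each coordinate hyperplane section meets $Y$ in points stabilized by a non-trivial element of $V$. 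Matching this against Lemma~\ref{lem:Galois}.(1), which says each of the three involutions is the inertia generator at exactly $2$ branch points, confirms that the ramification points are exactly the points of $Y$ with $xyz=0$, giving part~(2).

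I expect the main obstacle to be the diagonalization/classification step: one must justify carefully that, over a field of characteristic $\neq 2$, the embedding $V\hookrightarrow\PGL_3(\Kb)$ is (up to change of coordinates) the diagonal sign action, rather than merely verifying that the stated action works. The cleanest route is to lift to $\GL_3$, use that commuting semisimple operators are simultaneously diagonalizable over the algebraically closed field $\Kb$, and then enumerate the possible eigenvalue patterns of order $2$ up to scaling, ruling out patterns that fail to give a faithful action of $C_2\times C_2$ or that fix $Y$ only trivially. Since this is exactly the content of the classification of Vermeulen and \cite{LRRS} cited just above, one may alternatively simply invoke that classification and read off the stated normal form.
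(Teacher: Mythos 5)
Your proof is correct, but it takes a genuinely different (and more self-contained) route than the paper. The paper disposes of part~(1) in one line by invoking the classification of non-hyperelliptic genus-$3$ curves with non-trivial automorphisms in \cite{LRRS} (going back to Vermeulen), and declares part~(2) a ``direct verification''; you instead reprove the relevant piece of that classification from scratch: linearity of automorphisms via the canonical embedding, lifting the three involutions from $\PGL_3(\Kb)$ to commuting involutions in $\GL_3(\Kb)$, simultaneous diagonalization in characteristic $\neq 2$, and then reading off the invariant monomials. Your approach buys transparency and independence from the reference, at the cost of having to justify two small points that the citation hides. First, the lifting step: commuting order-$2$ elements of $\PGL_3$ do lift to commuting involutions of $\GL_3$ (the commutator of the lifts is a scalar $c$ with $c^2=c^3=1$), but this deserves a sentence since central extensions are the standard pitfall here. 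Second, when you impose invariance of $F$ you allow the scalar $\pm 1$ and then conclude as if the scalar were $+1$; you should rule out $\sigma^*F=-F$ explicitly, e.g.\ by noting that it would force every monomial of $F$ to have odd degree in the negated variable, so $F$ would be divisible by that variable, contradicting irreducibility (smoothness) of $Y$. For part~(2) your identification of ramification points with points of non-trivial stabilizer is exactly the intended ``direct verification''; it is worth adding that the isolated fixed points $(1:0:0)$, $(0:1:0)$, $(0:0:1)$ lie on $Y$ only if one of $A,B,C$ vanishes, in which case $Y$ is singular, so for smooth $Y$ the fixed locus on $Y$ is precisely $Y\cap\{xyz=0\}$.
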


\begin{proof}
	Statement (1) follows from the classification of non-hyperelliptic
	curves of genus~$3$ with non-trivial automorphism group
	\cite{LRRS}. Note that for a plane quartic $Y$ the existence of a
	subgroup of $\Aut_{\Kb}(Y)$ isomorphic to $V$ already implies that
	$g(Y/V)=0$ and that the degree-$2$ subcovers have genus $1$. 
	Statement (2) follows by direct verification.
   \hfill $\qed$ 
   \end{proof}

In the rest of the paper, if $Y$ is a smooth plane quartic, then we denote the non-trivial elements of $V$ as 
\begin{equation*}
\begin{aligned}
\sigma_a((x:y:z))=&\,(-x:y:z),\\
\sigma_b((x:y:z))=&\,(x:-y:z), \\\sigma_c((x:y:z))=&\,(x:y:-z).
\end{aligned}
\end{equation*}

We find that $X:=Y/V$ admits an equation of the form
\begin{equation}\label{eq: G}
X:\; Au^2+Bv^2+Cw^2+avw+buw + cuv=0,
\end{equation}
where $u=x^2, v=y^2$, and $w=z^2$ and the map $f:Y\to X$ is given by
$(x:y:z)\mapsto (u:v:w)$.

For $i\in \{a,b,c\}$ we define $E_i=Y/\langle\sigma_i\rangle$. Then
$E_i$ is an elliptic curve for all $i$, as explained in the proof of Lemma~\ref{lem:quartic_eq}. We obtain the following diagram.
\begin{equation*}
\xymatrix{
	& Y\ar@{-}[ld]\ar@{-}[d]\ar@{-}[rd] & \\
	Y/\langle\sigma_a\rangle=E_a\ar@{-}[rd]&Y/\langle\sigma_b\rangle=E_b\ar@{-}[d] & Y/\langle\sigma_c\rangle=E_c\ar@{-}[ld]\\
	&Y/V = X. &  
}
\end{equation*}

Set 
\begin{equation}\label{eq:pa}
p_a(T)=T^2-2aT+4BC,\quad p_b(T)=T^2-2bT+4AC, \quad p_c(T)=T^2-2cT+4AB.
\end{equation}
For $i\in \{a,b,c\}$ we let $\Delta_i$ be $1/4$ times the discriminant of $p_i$, i.e.,
\begin{equation}\label{eq:Delta_a}
\Delta_a= a^2-4BC, \quad \Delta_b=b^2-4AC, \quad \Delta_c=c^2-4AB.
\end{equation}
One computes
\[
\begin{split}
\Delta(X)=&-4 A B C + A a^{2} + B b^{2} + C c^{2} -  a b c,\\
\Delta(Y)=&-2^{-20}A  B  C  \Delta_a^{2}  \Delta_b^{2}  \Delta_c^{2}   \Delta(X)^{4},\\
%\Delta(E_a)=&\, 2^4 A^{2}  (b^{2}-4 A C)  (c^{2} - 4 A B)\Delta(X)^{2},\\
%j(E_a)=&\frac{-2^{12}(-12A^2BC - A^2a^2 + 3ABb^2 + Aabc + 3ACc^2 - b^2c^2)^3}{\Delta(E_a)}.\\
j(E_a) = & 256\,\frac{\left((2Aa-bc)^2- 3A\Delta(X) \right)^3}{A^2\Delta_b\Delta_c\Delta(X)^2}.
\end{split}
\]

Here $\Delta(X)$ and $\Delta(Y)$ denote the discriminants of $X$ and $Y$ as hypersurfaces (see for example \cite{Demazure}),
respectively, and $j(E_a)$ is the $j$-invariant of the elliptic curve $E_a$. The analogous formulas for $E_b$ and $E_c$ are obtained by permuting $(A,B,C,a,b,c)$ in the obvious way.

\begin{remark} The Jacobian of $Y$ is isogenous to $E_a\times E_b\times E_c$. This may for example be deduced from \cite[Proposition~15]{HLP}. Loc. cit. also gives an explicit finite extension of $K$ over which the isomorphism between the Jacobian of $Y$ and $E_a\times E_b\times E_c$ may be defined.
	The authors of that
	paper use this to find an example of a curve in this family with small conductor \cite[Example 16]{HLP}.
\end{remark}

Let $\alpha$ be a root of $p_a$, $\beta$ a root of $p_b$, and $\gamma$
a root of $p_c$.  Then the branch points of the cover $f:Y \rightarrow
X$ are
\begin{equation}\label{eq:branchV4}
\begin{aligned}
\sigma_a :&\; P_a = (0:\alpha: -2B), &P_a' = (0:-2C:\alpha),\\
\sigma_b :&\; P_b = (-2C:0:\beta), &P_b' = (\beta:0:-2A),\\
\sigma_c :&\; P_c = (\gamma:-2A:0), &P_c' = (-2B : \gamma: 0).
\end{aligned}
\end{equation}
The group element $\sigma_i$ is  the inertia
generator of the  points $P_i$ and $P_i'$ for $i\in \{a,b,c\}$.

\subsection{Stable reduction of covers}\label{sec:stable}
In this section $(K, \nu)$ is a complete discrete valuation field. We
assume that the characteristic of $K$ is different from $2$.
We denote by $\mathcal{O}$ the ring
of integers of $K$, by $\pi$ a uniformizing element, and by $k$ its residue
field.  We assume that $k$ is algebraically closed and of
characteristic $\neq 2$. We allow $K$ to be replaced by a finite
extension. We assume the valuation to be normalized by $\nu(p)=1$,
where $p>0$ is the residue characteristic of $\nu$. In the case of
equal characteristic~$0$, one adapts this choice of normalization
suitably.

Let $Y$ be a smooth projective absolutely irreducible curve over $K$.
A \emph{model} of $Y$ is a flat proper normal $\mathcal{O}$-scheme
$\mathcal{Y}$ such that $\mathcal{Y}\otimes_{\mathcal{O}} K\simeq
Y$. If the model is clear from the context, we call its special fiber
the \emph{reduction of $Y$} and denote it by $\overline{Y}$.

\begin{definition}\label{def:goodred}
	\begin{enumerate}[(1)]
		\item A curve $Y$ over $K$ has \emph{good reduction} if there exists a  model of~$Y$ over~$\OO$ such that~$\Yb$ is smooth.
		\item A curve $Y$ over $K$ has \emph{potentially 
			good reduction} if it has good reduction after replacing~$K$ by a
		finite extension. 
		\item A curve $Y$ over $K$
		has \emph{geometric bad reduction} if it does not have
		potentially good reduction.  
		\item A curve $Y$
		over $K$ has \emph{semistable reduction} if there
		exists a model $\Y$ of $Y$ whose special fiber
		$\overline{Y}$ is semistable, i.e.,~is reduced and has
		at most ordinary double points as
		singularities.  \end{enumerate}
\end{definition}

The Stable Reduction Theorem of Deligne--Mumford \cite{DM} states
that every curve~$Y$ of genus $g(Y)\geq 2$ admits a semistable model
after replacing $K$ by a finite extension, if necessary. Moreover,
there exists a unique minimal semistable model $\mathcal{Y}^{{\rm st}}$,
which is called the \emph{stable model} of~$Y$. Its special fiber
$\Yb$ is characterized by the property that each irreducible component
of genus~$0$ intersects the rest of $\Yb$ in at least three
points. Every semistable model $\mathcal{Y}$ admits a surjective map
$\mathcal{Y}\to \mathcal{Y}^{{\rm st}}$, which contracts the
``superfluous'' irreducible components of genus $0$ of the special
fiber and is an isomorphism on the generic fiber.

After replacing $K$ by a finite extension, we may assume that the
branch points of $f$ are $K$-rational.  Let $D\subset X$ be the branch
locus  of $f$. We consider it as a marking on $X$. A \emph{semistable
	$\mathcal{O}$-model} $(\mathcal{X}, \mathcal{D})$ of the marked curve
$(X,D)$ is a semistable model $\mathcal{X}$ of $X$ over $\mathcal{O}$,
together with a relative divisor $\mathcal{D}\to \Spec(\OO)$ with
$D = \mathcal{D}\otimes_{\OO} K$. Here $\mathcal{D}$ is the union of
disjoint sections $s_1, \ldots, s_r:\Spec(\OO)\to \mathcal{X}^{\rm
	sm}$ into the smooth locus of $\mathcal{X}$.

The following result is a version of the Stable Reduction Theorem for
covers. It follows immediately from \cite[Proposition~3.2]{BW}. Note that
the conditions of the following result are satisfied in our
situation.

\begin{proposition}\label{prop:stablymarked1}
	Let $G$ be a group. Let $f:Y\to X\simeq \PP^1_K$ be a $G$-Galois cover over $K$ such that
	the branch points of $f$ are $K$-rational, the number of branch
	points is greater than or equal to $3$, and the residue
	characteristic of $K$ does not divide the order of $G$.
	\begin{enumerate}[(1)]
		\item There exists a unique minimal semistable model  $(\mathcal{X}, \mathcal{D})$ of the marked curve $(X,D)$.
		\item The special fiber $\overline{X}$ of $\mathcal{X}$ is a tree of projective lines. Every irreducible component $\Xb_i$ of $\Xb$ contains at least three points which are either  singular points of $\Xb$ or belong to the support of $\overline{D}$.
	\end{enumerate}
\end{proposition}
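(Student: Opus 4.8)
The plan is to observe that conclusions (1) and (2) concern only the marked base $(X,D)=(\PP^1_K,D)$, where $D$ consists of the $r\geq 3$ distinct $K$-rational branch points, and to identify $(\mathcal{X},\mathcal{D})$ as the stable $r$-pointed genus-$0$ curve associated with this $K$-point. The group $G$ and the tameness hypothesis play no role in producing $(\mathcal{X},\mathcal{D})$ itself: they enter only when one lifts the base model to a semistable model $\mathcal{Y}$ of the cover, which is carried out in the following sections. Thus for this statement I would use nothing beyond $r\geq 3$.

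First I would invoke the theory of stable pointed curves. For $r\geq 3$ the Deligne--Mumford compactification $\Mb_{0,r}$ is a fine moduli scheme, smooth and proper over $\Spec(\ZZ)$ (Knudsen). Since the branch points are distinct and $K$-rational, the marked curve $(X,D)$ determines a $K$-point $\xi:\Spec(K)\to \M_{0,r}\subset\Mb_{0,r}$. Properness of $\Mb_{0,r}$ over $\ZZ$ together with the valuative criterion then show that $\xi$ extends to an $\OO$-point $\Spec(\OO)\to\Mb_{0,r}$; pulling back the universal stable curve along it yields the pair $(\mathcal{X},\mathcal{D})$ over $\OO$, with $\mathcal{D}$ the union of the $r$ tautological sections.

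Next I would read off the special fiber directly from the definition of a stable pointed curve. By construction $\overline{X}$ is a connected nodal curve of arithmetic genus $0$; as the arithmetic genus equals the sum of the genera of the components plus the first Betti number of the dual graph, every component has genus $0$ and the dual graph is a tree, so $\overline{X}$ is a tree of projective lines. The $r$ sections specialize to distinct smooth points, giving $\overline{D}$, and stability says precisely that each component carries at least three special points, namely nodes of $\overline{X}$ or points of $\overline{D}$; this is statement (2). For statement (1), uniqueness of the extension is the separatedness clause of the valuative criterion (equivalently, separatedness of $\Mb_{0,r}$), while minimality among all semistable marked models of $(X,D)$ follows because any such model admits a contraction onto the stable one, collapsing exactly the genus-$0$ components meeting the remaining special points in fewer than three points.

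The genuine content---and the step I would treat most carefully---is the behaviour when several branch points share a specialization in the naive reduction $\PP^1_k$: the sections then fail to be disjoint and stability fails. Separating such colliding points by blowing up $\PP^1_\OO$, and iterating, eventually produces a stable marked curve, and the assertion that this process terminates is exactly what properness of $\Mb_{0,r}$ encodes. There is by contrast no characteristic obstruction, since stable $r$-pointed genus-$0$ curves have trivial automorphisms and no pathologies in any residue characteristic, so tameness is not needed for (1) or (2). This explicit blow-up construction is the mechanism underlying \cite[Proposition~3.2]{BW}, from which the proposition is quoted; matching its output to the paper's definition of a semistable marked $\OO$-model---disjoint sections into $\mathcal{X}^{\rm sm}$---is then immediate, as the marked points of a stable pointed curve are by definition distinct smooth points.
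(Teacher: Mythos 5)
Your proposal is correct and is essentially the paper's approach: the paper's proof consists of the single citation to \cite[Proposition~3.2]{BW}, and your argument via the properness and separatedness of $\Mb_{0,r}$ over $\ZZ$ (plus contraction of unstable components for minimality) is exactly the standard proof underlying that reference. Your observation that only $r\geq 3$ matters here, while $G$ and tameness enter only for the model $\mathcal{Y}$ of the cover, is also accurate.
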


We call the model $(\mathcal{X}, \mathcal{D})$ from
Proposition~\ref{prop:stablymarked1} the \emph{stably marked model}  of $(X,
D)$. The normalization~$\mathcal{Y}$ of~$\mathcal{X}$ in the function
field $K(Y)$ of $Y$ is a model of $Y$. After replacing~$K$ by a
further finite extension, we may assume that the special fiber $\Yb$
of $\mathcal{Y}$ is reduced.

\begin{definition}\label{def:adm}
	An \emph{admissible cover} is a $G$-Galois cover $\overline{f}: \Yb\to
	\Xb$ between projective semistable irreducible curves over an
	algebraically closed field such that:
	\begin{enumerate}[(1)]
		\item  The singular points of $\Yb$ map to the singular points of $\Xb$.
		\item  For every singular point $\tau\in \Yb$ the inertia group of $\tau$ acts on the two branches of $\Yb$ via characters that are inverse to each other.
	\end{enumerate}
\end{definition}

We refer to \cite[Section 5]{RW} or \cite{Wewers} for more details on
admissible covers and their deformation. In our situation all elements
of the Galois group $V$ have order~$1$ or $2$, and condition (2) of
Definition~\ref{def:adm} reduces to the condition 
that the inertia
group of the restriction of $\overline{f}$ to each of the branches of
$\Yb$ at the singular point $\tau$ is the same subgroup of $V$.

\begin{proposition}\label{prop:stablymarked2}
	\begin{enumerate}[(1)]
		\item  The model $\mathcal{Y}$ of $Y$ is semistable.
		\item The map $f$ extends to a finite flat map 
		$f:\mathcal{Y}\to\mathcal{X}$ over $\OO$.
		\item The special fiber $\overline{f}:\Yb\to \Xb$ of $f:\mathcal{Y}\rightarrow\mathcal{X}$ is an admissible cover.
		\item Every admissible $G$-Galois cover $\overline{f}:\Yb\to \Xb$ over $k$ occurs as the
		reduction of a finite map of semistable models of curves.
	\end{enumerate}
\end{proposition}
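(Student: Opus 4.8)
The plan is to deduce (1)--(3) from a purely local analysis of the normalization $f:\mathcal{Y}\to\mathcal{X}$ at each point of the special fibre $\Xb$, exploiting that $f$ is tamely ramified because $\Char(k)\nmid |G|$, and then to obtain (4) from the deformation theory of admissible covers. Since $\OO$ is a complete (hence excellent, Nagata) discrete valuation ring, $\mathcal{X}$ is excellent, so the normalization $\mathcal{Y}\to\mathcal{X}$ in $K(Y)$ is finite; moreover $\mathcal{Y}$ is already known to be a model of $Y$, so it is $\OO$-flat. Thus it remains to identify the completed local ring of $\mathcal{X}$, and of $\mathcal{Y}$ above it, at the three kinds of points of $\Xb$: the smooth points off the branch locus, the branch sections, and the nodes. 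In each case the computation will exhibit $f$ as \emph{finite locally free of degree $|G|$}, which simultaneously yields the flatness in (2) and lets us read off the local shape of $\Yb$.

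First I would treat $\Xb^{\mathrm{sm}}$. Away from the branch divisor $\mathcal{D}$ the cover $f$ is étale on the generic fibre, and by tameness together with purity it extends to an étale cover over a neighbourhood in $\mathcal{X}^{\mathrm{sm}}$, so $\mathcal{Y}$ is smooth over $\OO$ and $f$ is finite étale there. At a section $s_i$ of $\mathcal{D}$, which by Proposition~\ref{prop:stablymarked1} lies in $\mathcal{X}^{\mathrm{sm}}$, the ramification is tame along a divisor smooth over $\OO$, so Abhyankar's lemma presents the completed local ring of $\mathcal{Y}$ as a Kummer extension $t\mapsto t^{e}$ of that of $\mathcal{X}$; after adjoining the requisite roots of the uniformiser to $K$, the normalization is again smooth over $\OO$ with a single point of ramification index $e$ above $s_i$. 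Hence $\Yb$ is smooth over $\Xb^{\mathrm{sm}}$, and $f$ is finite locally free there.

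The heart of the argument, and the step I expect to be the main obstacle, is the analysis at a node $\xi$ of $\Xb$, whose completed local ring on $\mathcal{X}$ has the form $\OO[[s,t]]/(st-\pi^{m})$. The restriction of $f$ is a tame $G$-cover whose inertia at the two branches is generated by a common element. Computing the normalization of this ring inside the corresponding tame Kummer extension---after enlarging $K$ so that the relevant roots of $\pi$ are available and the special fibre becomes reduced---one finds that each point $\tau$ above $\xi$ again has completed local ring of nodal type $\OO[[s',t']]/(s't'-\pi^{m'})$, with branches pulled back through $s=s'^{e}$, $t=t'^{e}$. The delicate points here are tracking the thickness $m'$ and the base extension needed to make $\Yb$ reduced, and checking that the two branch characters are genuinely inverse to each other (for $G=V$, equal). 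This single computation then gives everything: $f$ is finite locally free of degree $|G|$ at $\tau$, so together with the previous paragraph we obtain the flatness asserted in (2); the fibre $\Yb$ is reduced with at most ordinary double points, so $\mathcal{Y}$ is semistable, proving (1); singular points of $\Yb$ map to singular points of $\Xb$ and the inertia acts through inverse characters, which is precisely Definition~\ref{def:adm}, proving (3).

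Finally, for (4) I would invoke the deformation theory of admissible covers, for which I refer to \cite[Section~5]{RW} and \cite{Wewers}. Given an abstract admissible $G$-Galois cover $\overline{f}:\Yb\to\Xb$ over $k$, tameness guarantees that the local deformations at the nodes are unobstructed and again governed by Kummer theory, so $\overline{f}$ lifts to a finite cover $f:\mathcal{Y}\to\mathcal{X}$ of semistable $\OO$-models whose special fibre is $\overline{f}$, possibly after a finite extension of $K$. The principal technical burden remains the node computation of the third paragraph: it is there that one must simultaneously control the thickness, the required ramified base change, and the inverseness of the branch characters that encodes the admissibility condition.
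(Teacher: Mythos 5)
Your proposal is correct and follows essentially the same route as the paper: the paper's own proof simply cites \cite[Theorem 3.4]{BW} for (1)--(2) and \cite{Wewers} for (3)--(4), and your local analysis (\'etaleness off the branch locus by tameness and purity, Abhyankar's lemma along the branch sections, the Kummer computation at the nodes, and the deformation theory of tame admissible covers for (4)) is precisely the content of those references. The one delicate point you rightly flag --- enlarging $K$ so that the special fiber $\Yb$ becomes reduced, i.e.\ so that the thickness at each node is compatible with the ramification index --- is exactly the finite extension the paper allows in the sentence immediately preceding the proposition.
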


\begin{proof}  Statements (1) and (2) follow from \cite[Theorem 3.4]{BW}. Statements (3) and (4) follow from \cite{Wewers}. Here we use that
	the residue characteristic of $K$ does not divide the cardinality of the
	Galois group.
   \hfill $\qed$ 
   \end{proof}

The special fiber $\overline{f}:\Yb\to \Xb$ of the map
$f:\mathcal{Y}\to \mathcal{X}$ from
Proposition~\ref{prop:stablymarked2}.(2) is called the \emph{stable
	reduction} of $f$.

\subsection{Combinatorial description of the stable reduction} \label{sec:combi}
The boundary of the Hurwitz space $\Hb_{3,V}$ parame\-trizes admissible
$V$-Galois covers (see Definition~\ref{def:adm}). The natural map
\[
\Hb_{3,V}\to \Mb_{3, V},\qquad [Y\to X]\mapsto Y^{\text{st}},
\]
is finite and surjective.  Here $Y^{\text{st}}$ is the stable curve of
genus $3$ obtained by contracting all irreducible components of genus
$0$ that intersect the rest of $\Yb$ in at most two points. To find
all possibilities for the stable curves in the boundary of $\Mb_{3,
	V}$ it therefore suffices to find all possibilities for the
corresponding admissible covers by
Proposition~\ref{prop:stablymarked2}.(4). In this section we give a
combinatorial description for the admissible $V$-Galois covers arising
as the reduction of the Galois covers described in
Lemma~\ref{lem:Galois}.  For simplicity, we call these covers 
simply \emph{admissible $V$-Galois covers}.

\begin{definition}\label{def:decorated_graph}
	A \emph{decorated graph} over $k$ is a datum $(\Xb, \overline{D})$, where
	\begin{enumerate}[(1)]
		\item $\Xb$ is a semistable curve of genus $0$ over $k$, i.e.,~a tree of 
		projective lines,
		\item  $\overline{D}\subset \Xb^{{\rm sm}}$ is a set of smooth $k$-rational
		points of cardinality $6$,
		\item  every irreducible component of $\Xb$ contains at
		least three points that are either in~$\overline{D}$ or singular
		points of $\Xb$,
		\item  the points of $\overline{D}$ are labelled by one of $\{1,2,3\}$, 
		where each label occurs exactly twice.
	\end{enumerate}
\end{definition}

Definition~\ref{def:decorated_graph} may easily be adapted to a more
general set-up. In this paper the notion always refers to our
particular set-up. 

The following lemma follows from the fact that $\Xb$ is a tree,
together with the description of the fundamental group of an affine
curve of genus $0$. A similar statement of the lemma can be found in \cite[Section 3, Prop. 3.6]{flon}.

\begin{lemma}\label{lem:graph}
	Let $(\Xb, \overline{D})$ be a decorated graph. Then there is a unique
	way to label the singular points of $\Xb$ by one of $\{0, 1,2,3\}$
	such that the product of the inertia generators $\sigma_i\in V$ of the
	marked and singular points on each irreducible component of $\Xb$ is
	the trivial element of $V$.
\end{lemma}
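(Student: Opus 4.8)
The plan is to translate the statement into linear algebra over $\FF_2$ and then exploit the tree structure of $\Xb$. Identify $V\cong\FF_2^2$ additively, so that the trivial element is $0$ and the non-trivial elements $\sigma_1,\sigma_2,\sigma_3$ corresponding to the labels $1,2,3$ are the three non-zero vectors; then the condition that a product of inertia generators be trivial becomes the condition that a sum of vectors vanish. Let $T$ be the dual graph of $\Xb$: its vertices are the irreducible components and its edges are the singular points, each of which lies on exactly two components since $\Xb$ is a tree of projective lines by Definition~\ref{def:decorated_graph}(1). A labeling of the singular points by $\{0,1,2,3\}$ is precisely a function $x\colon E(T)\to\FF_2^2$, and for each vertex $v$ I set $m(v)\in\FF_2^2$ to be the sum of the labels of the points of $\overline D$ lying on the component $v$ (this is fixed data). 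Because every element of $V$ has order dividing $2$, condition (2) of Definition~\ref{def:adm} forces the same inertia generator on both branches at a node, so the requirement of the lemma is exactly the linear system
\[
\sum_{e\ni v} x(e)=m(v)\qquad\text{for every vertex } v\in T.
\]

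First I would record the global consistency condition. Summing these equations over all vertices, each edge is counted at its two endpoints and so contributes $2x(e)=0$; hence a solution can exist only if $\sum_v m(v)=0$. This holds in our situation: by Definition~\ref{def:decorated_graph}(4) each label $1,2,3$ occurs exactly twice among the six points of $\overline D$, so $\sum_v m(v)=2(\sigma_1+\sigma_2+\sigma_3)=0$ in $\FF_2^2$.

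I would then prove that on a tree the system above has a unique solution whenever $\sum_v m(v)=0$, by induction on the number of vertices. If $T$ has a single vertex there are no edges and the condition reads $m(v)=0$, which is the hypothesis, so the empty labeling is the unique solution. Otherwise choose a leaf $\ell$ with unique incident edge $e_0=\{\ell,v\}$. The equation at $\ell$ forces $x(e_0)=m(\ell)$, determining this value uniquely. Deleting $\ell$ and $e_0$ yields a tree $T'$ with one fewer vertex; replacing $m(v)$ by $m'(v):=m(v)+m(\ell)$ and leaving the remaining weights unchanged, the equation at $v$ in $T$ becomes the equation at $v$ in $T'$, and one checks $\sum_{v'\in T'} m'(v')=\sum_{v'\in T} m(v')=0$. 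By the induction hypothesis $T'$ has a unique solution, which together with the forced value $x(e_0)=m(\ell)$ gives a unique solution on $T$.

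The crux is uniqueness, and this is exactly where the tree hypothesis enters: the leaf-pruning argument uses repeatedly that a finite tree with at least one edge has a leaf and that deleting it keeps the graph a tree, so every edge value is pinned down in turn. On a graph with cycles the analogous system would have a kernel indexed by the cycle space and uniqueness would fail; the genus-$0$ assumption on $\Xb$ is what rules this out. The only other point to check carefully is the bookkeeping in the pruning step, namely that transferring the weight $m(\ell)$ to the neighbor $v$ preserves both the tree structure and the vanishing of the total weight, as indicated above.
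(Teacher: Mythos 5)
Your proof is correct and complete. The paper itself does not write out an argument for this lemma: it only remarks that the statement ``follows from the fact that $\Xb$ is a tree, together with the description of the fundamental group of an affine curve of genus $0$'' and points to Flon for a similar result. Your argument is the natural formalization of exactly that remark. Writing $V\cong\FF_2^2$ additively, the per-component relation $\sum_{e\ni v}x(e)=m(v)$ is the additive form of the product-of-inertia-generators relation coming from $\pi_1$ of a punctured projective line, and the leaf-pruning induction is precisely where the tree hypothesis does its work. The two points you single out for care are the right ones and are handled correctly: the global compatibility $\sum_v m(v)=2(\sigma_1+\sigma_2+\sigma_3)=0$, which is needed both for the base case and for solvability, and the bookkeeping in the pruning step, where $m'(v)=m(v)+m(\ell)$ preserves the vanishing of the total weight. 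Your closing observation that on a graph with cycles the system would have a kernel indexed by the cycle space, so that uniqueness would fail, correctly identifies why the genus-$0$ (tree) assumption is essential; this is the content the paper compresses into its one-sentence justification.
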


Let $\overline{f}:\Yb\to \Xb$ be an admissible $V$-Galois cover.
Write $\overline{D}\subset \Xb^{{\rm sm}}$ for the branch points of~$\fb$ contained in the smooth locus $\Xb^{{\rm sm}}$ of $\Xb$.
Proposition~\ref{prop:stablymarked1} and Lemma~\ref{lem:Galois} imply
that every choice of a numbering of the elements of order~$2$ of $V$
gives rise to a decorated graph. In fact one checks that the labeling
of a singular point $\tau$ of $\Xb$ given by Lemma~\ref{lem:graph} is
$0$ if and only if $\fb$ is unbranched above $\tau$. If the label of
$\tau$ is non-zero, it corresponds to the inertia generator of $\tau$. This follows from the fact that the restriction of
$\overline{f}$ to an irreducible component  of $\Yb$ 
is a tamely ramified cover of
a component of $\Xb$ isomorphic to $\PP^1_k$.

The following lemma is straightforward, but a proof sketch is provided in Appendix A. A similar statement in the case of cyclic covers can be found in \cite[Section 4]{BW} and also in \cite[Section 3]{flon}. An elementary introduction can be found in \cite{An}.

\begin{lemma}\label{lem:adm}
	Let $(\Xb, \overline{D})$ be a decorated graph.
	Choose a numbering $\{\sigma_1, \sigma_2, \sigma_3\}$ of the elements
	of order~$2$ of $V$.  There exists a unique admissible $V$-Galois cover
	$\overline{f}:\Yb\to \Xb$ such that the inertia generator of
	$x\in \overline{D}$ is given by its label and $\fb$ is unbranched
	outside $\overline{D}\cup \Xb^{{\rm sing}}$. The decorated graphs are listed in Appendix~B. 
\end{lemma}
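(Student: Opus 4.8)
The plan is to build $\fb$ one irreducible component of $\Xb$ at a time and then glue along the nodes, exploiting that $V$ is abelian and that, since $\Char(k)\neq 2$ does not divide $|V|=4$, every cover in sight is tamely ramified. Fix the numbering $\{\sigma_1,\sigma_2,\sigma_3\}$ and, following Lemma~\ref{lem:graph}, label each singular point of $\Xb$ by an element of $\{0,1,2,3\}$, reading $0$ as the identity and $i$ as $\sigma_i$. On a single component $\Xb_i\simeq\PP^1_k$, let $x_1,\dots,x_n$ be the marked and singular points lying on it, with assigned inertia generators $\tau_1,\dots,\tau_n\in V$. The tame (prime-to-$p$) fundamental group of $\PP^1_k$ minus these points is the profinite completion of $\langle\gamma_1,\dots,\gamma_n\mid \gamma_1\cdots\gamma_n=1\rangle$, so a $V$-Galois cover branched at the $x_j$ with inertia generator $\tau_j$ at $x_j$ corresponds to a homomorphism sending $\gamma_j\mapsto\tau_j$. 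As $V$ is abelian this homomorphism is determined by the images of the generators and exists precisely when $\tau_1\cdots\tau_n=1$, which is exactly the content of Lemma~\ref{lem:graph}. Hence over each component there is a unique $V$-cover branched exactly at its special points with the prescribed inertia, and unbranched at points labelled $0$ (equivalently, via Kummer theory, the fibre product of the double covers cut out by the characters $V\to\mu_2$).

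First I would assemble the existence statement. The covers built over the individual components are glued over each node $\tau$ of $\Xb$: if $\tau$ carries the label $\sigma$, then on either side the fibre of the component cover over $\tau$ is $V$-equivariantly a copy of the coset space $V/\langle\sigma\rangle$, and choosing any $V$-equivariant identification of the two fibres produces a node of $\Yb$ over $\tau$. Since every non-trivial element of $V$ has order~$2$, the inertia groups on the two branches coincide and are inverse to one another automatically, so condition~(2) of Definition~\ref{def:adm} holds; condition~(1) holds because nodes are created only over nodes. The resulting curve $\Yb$ is nodal and carries a $V$-action with quotient $\Xb$, i.e.~an admissible $V$-Galois cover with the required branch and inertia data.

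For uniqueness, which I expect to be the only real difficulty, the subtlety is that the gluing of fibres at a node is not unique: two identifications differ by an element of $\Aut_V(V/\langle\sigma\rangle)\cong V/\langle\sigma\rangle$, so a priori different gluings might yield non-isomorphic covers. The decisive point is that the dual graph of $\Xb$ is a \emph{tree}. I would argue by induction on the number of components: choose a leaf component $\Xb_0$, meeting the rest of $\Xb$ in a single node $\tau$. The cover over $\Xb_0$ is unique up to $V$-equivariant isomorphism by the first paragraph, and the group $V$ acts on the fibre over $\tau$ with stabiliser $\langle\sigma\rangle$, hence transitively on the set of gluings at $\tau$. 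Thus any two choices of gluing differ by a deck transformation of the cover over $\Xb_0$ and produce isomorphic covers. Applying the inductive hypothesis to the subtree $\Xb\setminus\Xb_0$ and combining with this single-node adjustment completes the induction; equivalently, the obstruction to reconciling the component-wise isomorphisms lives in an $H^1$ of the dual graph, which vanishes because the graph is a tree.

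Finally, the assertion that these decorated graphs are precisely those tabulated in Appendix~\ref{sec:adm} is a finite combinatorial enumeration rather than a step of the existence-and-uniqueness argument: each component must carry at least three special points while $\overline{D}$ has only six points and $\Xb$ is a tree, which bounds the number of components, after which one lists the admissible trees and label distributions directly.
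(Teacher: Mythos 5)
Your argument is correct and is exactly the standard component-by-component construction that the paper leaves implicit (it declares the lemma straightforward and points to the analogous statements for cyclic covers in \cite{BW} and \cite{flon}): the abelianness of $V$ makes the inertia generators well-defined elements, so the product-one relation of Lemma~\ref{lem:graph} is precisely the existence criterion for the tame $V$-cover of each component, and the tree structure of $\Xb$ is what absorbs the $V/\langle\sigma\rangle$-torsor of gluing choices at each node into deck transformations. Nothing essential is missing; at most one could add the one-line remark that the glued curve $\Yb$ is connected because each $\sigma_i$ occurs as a label on $\Db$, so the inertia subgroups attached to the various components generate all of $V$.
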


In the following theorem we determine the different possibilities for
the stable reduction of $Y$ in our situation.  By \emph{type} of a
stable curve we mean the intersection graph of its irreducible
components together with the genus of each of the irreducible components.

\begin{theorem}\label{FromGraphtoRedType}
	Let $Y/K$ be a smooth projective curve of genus $3$ such that there
	exists a subgroup $V \subseteq \Aut_{\Kb}(Y)$ with $g(Y/V)=0$
	and such that all subcovers of $Y\to Y/V$ of degree $2$ have
	genus~$1$.  Then there are $13$ different possibilities for the
	type of stable reduction of $Y$.
\end{theorem}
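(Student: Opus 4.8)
The plan is to reduce the theorem to a finite combinatorial enumeration by exploiting the dictionary between stable reductions and admissible $V$-Galois covers established in the preceding sections. By Proposition~\ref{prop:stablymarked2}.(4) together with the finiteness and surjectivity of the map $\Hb_{3,V}\to \Mb_{3,V}$, every stable reduction type of $Y$ arises as $Y^{\text{st}}$ for some admissible $V$-Galois cover $\fb:\Yb\to \Xb$. Moreover, by Lemma~\ref{lem:adm} such covers are in bijection (after fixing a numbering of the three involutions in $V$) with the decorated graphs of Definition~\ref{def:decorated_graph}. So the first step is to enumerate all decorated graphs $(\Xb,\overline{D})$: that is, all ways of distributing the $6$ marked points (two labelled $1$, two labelled $2$, two labelled $3$) among a tree of projective lines subject to the stability condition (3), that every component carries at least three special points. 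This is a genuinely finite combinatorial problem because stability bounds the number of components.

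Next I would, for each decorated graph, use Lemma~\ref{lem:graph} to compute the induced labels $\{0,1,2,3\}$ on the singular points (nodes) of $\Xb$, which record the inertia generators along the edges of the tree. Then I would reconstruct the corresponding admissible cover $\Yb\to\Xb$ via Lemma~\ref{lem:adm} and determine the type of $\Yb$: over each component $\Xb_i\cong\PP^1_k$ one has a tamely ramified $V$-cover branched at the special points with the prescribed inertia, so the Riemann--Hurwitz formula yields the genus of each component of $\Yb$ lying above $\Xb_i$ and the number of such components (which depends on whether the inertia subgroup generated by the local monodromy is all of $V$, a single $C_2$, or trivial). Assembling these components along the nodes, with the gluing dictated by condition (2) of Definition~\ref{def:adm}, gives the intersection graph and the genera, i.e.~the type. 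Finally one contracts the genus-$0$ components meeting the rest in at most two points to pass from $\Yb$ to the stable curve $Y^{\text{st}}$.

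The main obstacle, and the bulk of the work, is controlling the redundancy in this enumeration: many distinct decorated graphs produce the same stable type, so the count $13$ emerges only after identifying which admissible covers yield isomorphic stable curves. Part of this redundancy comes from the choice of numbering of $\{\sigma_1,\sigma_2,\sigma_3\}$ in Lemma~\ref{lem:adm} (the outer $S_3$-symmetry permuting the three labels), and part from different tree configurations of $\Xb$ collapsing to the same genus-$3$ stable curve after contraction. I would organize the enumeration by the combinatorial type of the tree $\Xb$ (number of components and their adjacency), treat the symmetry group acting on the labels to cut down cases, and then carefully match the resulting stable curves. Since the decorated graphs are explicitly tabulated in Appendix~\ref{sec:adm}, the proof reduces to verifying that this list, taken up to the relevant symmetries and after contraction, realizes exactly $13$ distinct types; this verification is a direct, if lengthy, case check rather than a conceptual difficulty.
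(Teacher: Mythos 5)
Your proposal follows essentially the same route as the paper's proof: enumerate the possible tree configurations of $\Xb$ imposed by the stability condition, list the resulting decorated graphs up to the $S_3$-renumbering of the involutions (the paper finds $20$, tabulated in Appendix~\ref{sec:adm}), reconstruct each admissible cover via Lemmas~\ref{lem:graph} and~\ref{lem:adm} together with Riemann--Hurwitz, and contract the superfluous genus-$0$ components to identify which covers yield the same stable type. The paper likewise carries out only one representative case (type III.5) in full and leaves the remaining verifications as a routine case check, so your level of detail matches its own.
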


\begin{proof}
	Let
	$X=Y/V$ and $D$ be the branch locus of $f:Y\to X$, considered as a
	divisor on $X$. After replacing $K$ by a finite extension, we
	may assume that the divisor $D$ splits over~$K$. It follows by Proposition~\ref{prop:stablymarked1} that $(X, D)$ admits a stable
	model $(\mathcal{X}, \mathcal{D})$. Its special fiber
	$(\Xb, \overline{D})$ gives rise to a decorated graph
	(see Definition~\ref{def:decorated_graph}).

	Since $\overline{D}$ has cardinality $6$ and
	$(\Xb,\overline{D})$ must fulfill condition (3) from
	Definition~\ref{def:decorated_graph}, the curve $\Xb$ cannot
	have more than $4$ components. There are the following
	possibilities for $\Xb$:
	\begin{enumerate}[(I)] 
		\item $\Xb$ is
		irreducible.  
		\item $\Xb$ is a chain of two projective lines. Either there are three
		marked points on each component or   there are four marked points on one
		component and two marked points on the other.
		\item $\Xb$ is a chain
		of three projective lines. Either there are exactly two marked
		points on each component or there is only one marked point on
		the middle component and two, respectively three, marked
		points on the remaining components.  
		\item $\Xb$ is a chain of
		four projective lines and there is one marked point on each of the
		two components in the middle and two marked points on the
		other two components.  
		\item[(IV$^*$)] $\Xb$ consists of four
		projective lines such that the first component intersects all other
		components and there are no further singularities. There is no marked point on the first component
		and two marked points on each of the remaining
		components.  \end{enumerate}

	Next, we assign a label in $\{0,1,2,3\}$ to the points in
	$\overline{D}$ and the set $S$ of singularities of~$\Xb$. Let
	$\sigma_1, \sigma_2, \sigma_3$ denote the non-trivial elements
	of $V$. Lemma~\ref{lem:adm} states that the label on the
	singularities is uniquely determined by that of the marked
	points. Moreover, none of the marked points is assigned the label $0$.

	Up to the choice of the numbering of the elements of order~$2$
	in $V$, there are 20 possibilities for the decorated
	graph. These are depicted in Figures~(\ref{figs:dec-graphs-I})--(\ref{figs:dec-graphs-IV}) 
	in Appendix~B. The numbering of the cases in the rest
	of the proof refers to the numbers of the cases there. 
	
	To finish the proof it remains to describe the admissible cover
	$\Yb\to \Xb$ and the image of $\Yb$ in $\Mb_{3,V}$ for each of the
	possibilities for the decorated graph $(\Xb, \overline{D})$. For more details, we refer to the extended version of this proof in Appendix A. Recall
	from the beginning of Section~\ref{sec:combi} that the image in
	$\Mb_{3,V}$ of the semistable curve $\Yb$ is obtained by contracting
	all its irreducible components of genus $0$ which intersect the rest
	of $\Yb$ in at most two points. We discuss one of the cases. The other
	cases are similar. The different types are depicted in Figure~\ref{figs:admissible-covers} in Appendix~B.

	Let  $(\Xb, \overline{D})$ be a decorated graph of type III.5,
	Figure~\ref{fig:TypeIII.5}.  
	
	\begin{figure}[h!] \sidecaption
		\begin{tikzpicture}[scale=.6]
		%axis
		\draw (-0.5,0) -- (3.5,0);
		\draw (0,0) -- +(110:4cm);
		\draw (0,0) -- +(-70:0.5cm);
		\draw (3,0) -- +(70:4cm);
		\draw (3,0) -- +(-110:0.5cm);
		%ticks
		\foreach \x/\y in {1/\two,2/\one,3/\one}
		\draw (110:\x)++(20:0.1) -- ++(-160:0.2) node[left=0.5mm] {\y };
		\draw (1.5,-0.1) -- (1.5,0.1) node[below=3mm] {\two };
		\foreach \x/\y in {1/\three,2/\three}
		\draw (3,0)++(70:\x)++(160:0.1) -- ++(-20:0.2) node[right=0.5mm] {\y };
		\end{tikzpicture}
		\caption{$\Xb$ of Type III.5. \label{fig:TypeIII.5}}
	\end{figure}
	
	We number the irreducible components of $\Xb$ as
	$\Xb_1,\Xb_2,\Xb_3$ (from left to right). Let $\fb: \Yb\to\Xb$
	be the (unique) admissible $V$-Galois cover from
	Lemma~\ref{lem:adm}.  By Lemma~\ref{lem:graph} the label of
	the intersection point of $\Xb_1$ and $\Xb_2$ is $2$; the
	label of the intersection point of $\Xb_2$ and $\Xb_3$ is
	$0$. 
	
	The restriction of $\fb$ to $\Xb_1$ contains branch points with two
	different inertia generators. Therefore there is a unique
	irreducible component $\Yb_1$ of $\Yb$ above $\Xb_1$. The
	Riemann--Hurwitz formula implies that $\Yb_1$ has genus $1$.
	All branch points of the restriction of $\fb$ to~$\Xb_2$
	(respectively~$\Xb_3$) have the same label. Therefore there are two
	irreducible components of $\Yb$ above $\Xb_2$ and two above~$\Xb_3$. The Riemann--Hurwitz formula implies that all four
	components have genus zero. Since $\fb$ is unbranched at the
	intersection point of $\Xb_2$ and $\Xb_3$, these four
	components intersect in four points as depicted in
	Figure~\ref{fig:TypeIII.5_Y}.
	
	\begin{figure}[h!]\sidecaption  \begin{tikzpicture}[scale=.6]
		%axis
		\draw (-0.5,0) -- (4.5,0);
		\draw (-1,1) -- (5,1);
		\draw (0,0) -- +(110:4cm);
		\draw (0,0) -- +(-70:0.5cm);
		\draw (3,0) -- +(70:4cm);
		\draw (3,0) -- +(-110:0.5cm);
		\draw (4,0) -- +(70:4cm);
		\draw (4,0) -- +(-110:0.5cm);
		\draw (-1.5,4.5) node[] {$\Yb_1$};
		\draw (-2,1.5) node[] {$\Yb_2$};
		\draw (-1.5,0) node[] {$\Yb'_2$};
		\draw (4.5,4.5) node[] {$\Yb_3$};
		\draw (6,4.5) node[] {$\Yb'_3$};
		\end{tikzpicture}
		\caption{$\Yb$ corresponding to Type III.5. \label{fig:TypeIII.5_Y}}
	\end{figure}
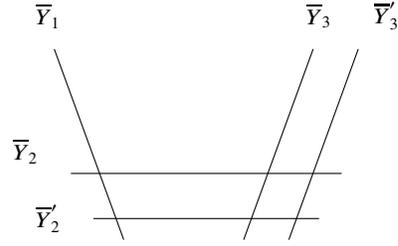

	We conclude that there are exactly two irreducible components of $\Yb$
	of genus zero that intersect the rest of $\Yb$ in at most two points,
	namely the two components above~$\Xb_3$. Contracting these yields the
	image of $\Yb$ in $\Mb_{3,V}$ as depicted in Figure~\ref{fig:TypeIII.5_stable}.
	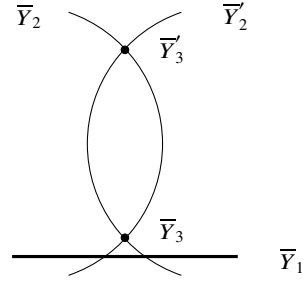
\begin{figure}[h!] \sidecaption
		\begin{tikzpicture}[scale=.5]
		\draw (-0.5,-0.5) to[quick curve through={(2,3)}]
		(-0.5,6.5);
		\draw (2.5,-0.5) to[quick curve through={(0,3)}]
		(2.5,6.5);
		\draw[very thick] (-2,0) to (4,0);
		\draw (6,0.5)  node[below=3mm,left=0mm] {$\Yb_1$};
		\draw (-1,7)  node[below=3mm,left=0mm] {$\Yb_2$};
		\draw (4.5,7)  node[below=3mm,left=0mm] {$\Yb'_2$};
		\draw [fill] (1,0.5) circle [radius=0.1];
		\draw [fill] (1,5.5) circle [radius=0.1];
		\draw (2.8,1.3)  node[below=3mm,left=0mm] {$\Yb_3$};
		\draw (2.8, 6.1)  node[below=3mm,left=0mm] {$\Yb'_3$};
		\end{tikzpicture} 
		\caption{Special fiber of the stable model. \label{fig:TypeIII.5_stable}}
	\end{figure}	
   \hfill $\qed$ 
   \end{proof}

\begin{remark}
	We note that none of the singular reduction types occurring in 
	Theorem~\ref{FromGraphtoRedType} is of compact type. Phrased differently, 
	all the singular curves in Appendix~B have loops. This is not
	surprising, as the inertia group of a singular point of $\Yb$ is
	cyclic since the residue characteristic is different from $2$. One may
	deduce from the fact that $\Yb$ is connected, that we always have loops.
\end{remark}

\subsection{Computing the stable reduction}\label{sec:compute}

In this section we outline the method we use to compute the decorated
graph associated with a curve $Y/K$ endowed with an action of~$V$ such
that all degree-$2$ subcovers have genus $1$. We assume that $K$ is
sufficiently large to have all branch points of $f:Y\to
X:=Y/V$ $K$-rational. Recall that we may assume that $Y$ is given
by Equation \eqref{eq: F}. As in the proof of
Theorem~\ref{FromGraphtoRedType}, it suffices to determine the
decorated graph associated with the special fiber $(\Xb,
\overline{D})$ of the stably marked model $(\mathcal{X},
\mathcal{D})$ of $(X, D)$, where $D$ is the branch locus
of the natural map $f:Y\to X$ marked by the inertia generators.

To compute $(\Xb, \overline{D})$ we follow the strategy from
\cite[Section 4.2]{BW}. Loc.~cit.~treats the superelliptic case,
i.e.,~the case of cyclic covers of the projective line. With our
preparations, the adaptation to the current set-up is straightforward. 

The key idea used in \cite[Section 4.2]{BW} is to describe the
irreducible components of~$\Xb$ in terms of \emph{coordinates}
$\xi:X\stackrel{\sim}{\to}\PP^1_K$. Every coordinate $\xi$ defines a
model $\PP^1_{\OO}$ of~$X$, and hence by reduction, a projective line
$\Xb_\xi$ over $k$ \cite[Proposition~4.2.(1)]{BW}. Let $T$ 
be the set of triples of pairwise distinct points of $D$. Every $t=(P,Q,R)\in T$
defines a unique coordinate $\xi_t$ with 
\[
\xi_t(P) =0, \; \xi_t(Q) =1,\; \xi_t(R) = \infty. 
\]
Two coordinates $\xi_1$ and $\xi_2$ are \emph{equivalent} if
$\xi_1\circ \xi_2^{-1}:\PP^1_K\stackrel{\sim}{\to} \PP^1_K$ extends to
an isomorphism over $\OO$. This equivalence relation defines an
equivalence relation on $T$ denoted by $\sim$.
\cite[Proposition~4.2.(3)]{BW} states that there is a bijection between
$T/_\sim$ and the set of irreducible components of $\Xb$. We write
$\xi_t$ (respectively~$\Xb_t$) for the coordinate (respectively~the irreducible
component of $\Xb$) corresponding to $t\in T$. In \cite[Remark
4.3]{BW} it is explained how to reconstruct the intersection points
between the different irreducible components from the values
$\xi_t(P)$, where $t\in T/_\sim$ and $P\in D$ runs over the branch
points.

\begin{example}\label{exa:blowup}
	Let $Y/K$ be a plane quartic curve defined by Equation
	\eqref{eq: F}.
	Assume that $\nu(\Delta(X)) = 0$, $\nu(A),
	\nu(B) > 0$, $\nu(C)=0$, $\nu(a)>0$, and $\nu(b)= \nu(c) = 0$. We
	determine the stable reduction of $Y$ under these conditions. This
	is a special case of case (f.vi) of Theorem~\ref{THM:Main_NonDegConic}: the
	result there is formulated more symmetrically in terms of the invariants we
	introduce in Section~\ref{sec:invariants_quartic}.

	We let $\X_0$ be the model of a conic $X$ defined by Equation \eqref{eq: G} and use the
	notation of Equation \eqref{eq:branchV4} for the branch points of $f:Y\to
	X$. Write $\beta$ (respectively~$\gamma$) for the root of $p_b$ (respectively~$p_c)$
	of valuation zero, with \(p_b,\,p_c\) defined as in Equation \eqref{eq:pa}. 
	Both roots, $\alpha$ and ${\alpha}'$, of $p_a$ have positive valuation, $\nu(\alpha)+\nu({\alpha}')=\nu(B)$ and we can assume $\nu(\alpha)\leq\nu({\alpha}')$. In particular, $0=\nu(C)<\nu(\alpha)<\nu(B)$, and the branch points $P_a, \; P_a'$ and $P_c'$ reduce to
	the point $(0:1:0)$, the branch points $P_b'$ and~$P_c$ reduce to the
	point $(1:0:0)$, and $P_b=(-2C:0:\beta)$ reduces to a different point from the previous ones on the special fiber
	$\Xb_0$ of $\X_0$. In particular, $\X_0$ is not stably marked. The
	previous discussion implies that $\Xb_0$ is one of the irreducible
	components of the special fiber $\Xb$ of the stably marked model $\X$.
	
	\begin{figure}[!htbp]\sidecaption
		{
			\begin{tikzpicture}[scale=.5]
			\draw (-2,0) -- (5,0);
			\foreach \x/\y in {0/{\(\overline{P_a}, \overline{P_c'}, \overline{P_a'}\)},1/\(\overline{P_b}\),2/{\(\overline{P_b'},\overline{P_c}\)}}
			\draw (2*\x,-0.1) -- (2*\x,0.1) node[below=3mm,left=0mm,rotate=45] {\y};
			\draw (7,0.6)  node[below=3mm,left=0mm] {$\overline{X}_0$};
			\end{tikzpicture}
		}
		\caption{Configuration of the branch points on the component $\Xb_0$.}
		\label{fig:X0}
	\end{figure}
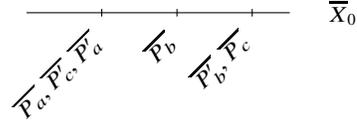

	Inspection of the cases in Appendix~B yields that the
	decorated graph $(\Xb, \Db)$ is of type IV.2, IV.3 or III.6. In
	order to determine the reduction type of $\Xb$, it suffices to
	distinguish between these three cases. We can do this by considering 
	the  coordinate $\xi=\xi_t$ of $X$ corresponding to $t = (P_a', P_a, P_b')$. 
	This coordinate is given by 
	\[
	\xi = \frac{\beta u + \alpha v + 2Cw}{2(\alpha-a) \cdot v}.
	\]
	Here $u,v,w$ are the coordinates of $X$ as in Equation \eqref{eq: G}.
	
	Using the assumptions on the valuations of the parameters
	$A,B,C,a,b,c$ one computes that $\overline{\xi(P_b')} =
	\overline{\xi(P_c)} = \infty$. Hence the type of $\Xb$ only depends on
	$\overline{\xi(P_c')}$. One may check that
	$\overline{\xi(P_b)}=\infty$ as well, by finding a different
	expression for $\xi$. However, this is not needed to distinguish
	between the possibilities for $\Xb$.
	
	Namely, $P_a, P_a'$, and $P_c'$ specialize to pairwise distinct points
	of the component $\Xb_\xi$ different from the intersection point of
	$\Xb_\xi$ with the rest of $\Xb$ if and only if
	$\overline{\xi(P_c')}\not\in \{0, 1, \infty\}.$ Otherwise, we need an
	additional coordinate $\xi'$ to separate $P_c'$ from the point $Q\in
	\{P_a, P_a', P_b\}$ with $\xi(P_c')\equiv \xi(Q)\pmod{\pi}$. However,
	to decide what decorated graph occurs it suffices to know for which
	point $Q$ this holds. It is not necessary to calculate the coordinate
	$\xi'$ explicitly. The possible configurations of the components $\Xb_0$ and $\Xb_{\xi}$ are depicted in Figure \ref{fig:example_specializations}.
	
	\begin{figure}[!htbp]
		\centering
		\captionsetup[subfigure]{labelformat=empty}
		\subfloat[\(\xi(P_c') = 1 \mod{\pi}\)]{
			\begin{tikzpicture}[scale=.4]
			%axis
			\draw (-0.5,0) -- (6,0);
			\draw (0,0) -- +(110:5.5cm);
			\draw (0,0) -- +(-70:0.5cm);
			%ticks
			\foreach \x/\y in {3/{\(\overline{P_a}, \overline{P_c'}\)},1.5/\(\overline{P_a'}\)}
			\draw (110:\x)++(20:0.1) -- ++(-160:0.2) node[left=0.5mm] {\y };
			\foreach \x/\y in {1/\(\overline{P_b}\),2/{\(\overline{P_b'},\overline{P_c}\)}}
			\draw (2*\x,-0.1) -- (2*\x,0.1) node[below=3mm,left=0mm,rotate=45] {\y};
			\end{tikzpicture}
		}
		\subfloat[\(\xi(P_c') = \infty \mod{\pi}\)]{
			\begin{tikzpicture}[scale=.4]
			%axis
			\draw (-0.5,0) -- (6,0);
			\draw (0,0) -- +(110:5.5cm);
			\draw (0,0) -- +(-70:0.5cm);
			%ticks
			\foreach \x/\y in {3/\(\overline{P_a}\),1.5/\(\overline{P_a'}\)}
			\draw (110:\x)++(20:0.1) -- ++(-160:0.2) node[left=0.5mm] {\y };
			\foreach \x/\y in {1/\(\overline{P_b}\),2/{\(\overline{P_b'},\overline{P_c}\)}}
			\draw (2*\x,-0.1) -- (2*\x,0.1) node[below=3mm,left=0mm,rotate=45] {\y};
			\draw (0,0)++(55:0.1) -- ++(-125:0.2) node[below = 3mm, left=0mm] {\(\overline{P_c'}\)};
			\end{tikzpicture}
		}
		\subfloat[\(\xi(P_c') \neq 0,1,\infty \mod{\pi}\)]{
			\begin{tikzpicture}[scale=.4]
			%axis
			\draw (-0.5,0) -- (6,0);
			\draw (0,0) -- +(110:5.5cm);
			\draw (0,0) -- +(-70:0.5cm);
			%ticks
			\foreach \x/\y in {4.5/{\(\overline{P_c'}\)}, 3/{\(\overline{P_a}\)},1.5/\(\overline{P_a'}\)}
			\draw (110:\x)++(20:0.1) -- ++(-160:0.2) node[left=0.5mm] {\y };
			\foreach \x/\y in {1/\(\overline{P_b}\),2/{\(\overline{P_b'},\overline{P_c}\)}}
			\draw (2*\x,-0.1) -- (2*\x,0.1) node[below=3mm,left=0mm,rotate=45] {\y};
			\end{tikzpicture}
		}
		\caption{Possible configurations of $\Xb_{\xi}$ and $\Xb_0$.}
		\label{fig:example_specializations}
	\end{figure}
	
		Recall that $\nu(\alpha)\leq\nu({\alpha}')$, so $\nu(\alpha)\leq
	\nu(a)$ and $2\nu(\alpha) \leq \nu(BC)$. We have
	\[
	\xi(P_c') = \frac{-2B \beta + \alpha \gamma }{2(\alpha -a) \gamma}=\frac{\alpha \gamma -2B \beta }{\alpha \gamma- {\alpha}' \gamma}.
	\]	
	
	By our assumptions, we have that $\nu(\alpha-a)\geq \nu(\alpha)$. 
	Hence we get $\overline{\xi(P_c')} \neq 0$. There remain three cases:
	\[
	\overline{\xi(P_c')}=\begin{cases}
	\overline{\xi(P_a)}=1& \text{ iff } \nu({\alpha}')>\nu(\alpha) \text{ iff } \nu(BC)>2\nu(a),\; \text{(type IV.3)},\\
	\overline{\xi(P_b)}= \infty& \text{ iff } \nu({\alpha}-a)>\nu(\alpha)\text{ iff }\nu(\Delta_a)>\nu(BC)=2\nu(a),\; \text{(type IV.2)},\\
	\neq 0,1,\infty&  \text{ otherwise},\; \text{(type III.6)}.
	\end{cases}
	\]
	One may check that the last case occurs if and only if
	$\nu(\Delta_a)=\nu(BC)<2\nu(a)$.
	
	As in the proof of Theorem~\ref{FromGraphtoRedType} one finds that
	the reduction type of $\Yb$ is Winky Cat if $\Xb$ is of type III.6,
	Cat if $\Xb$ is of type IV.3, and Garden if $\Xb$ is of type IV.2.

	If $\Xb$ is of type III.6, then the curve $\Yb$ has one component $\Yb_1$ of
	positive genus. The curve $\Yb_1$ is an elliptic curve and the
	restriction of $\fb$ to $\Yb_1$ factors as $\Yb_1\to \Xb_{\xi'}\to
	\Xb_\xi$, where $\Xb_{\xi'}:=\Yb_1/\langle\sigma_c\rangle$ is a
	projective line. Lemma~\ref{lem:graph} yields a description of the inertia
	generators. We explain how to compute
	the $j$-invariant of $\Yb_1$.
	
	The map $\Xb_{\xi'}\to \Xb_\xi$ has degree $2$ and is
	exactly branched at $\overline{\xi(P_a')}=0$ and
	$\overline{\xi(P_a)}=1$. Normalizing the unique point of $\Xb_{\xi'}$
	above $P_a'$ (respectively~$P_a$) to $0$ (respectively~$1$) and one of the points
	above the intersection point of $\Xb_{\xi}$ with the rest of $\Xb$ to
	$\infty$, we obtain Figure~\ref{fig:ellipitc_curve}.
	
	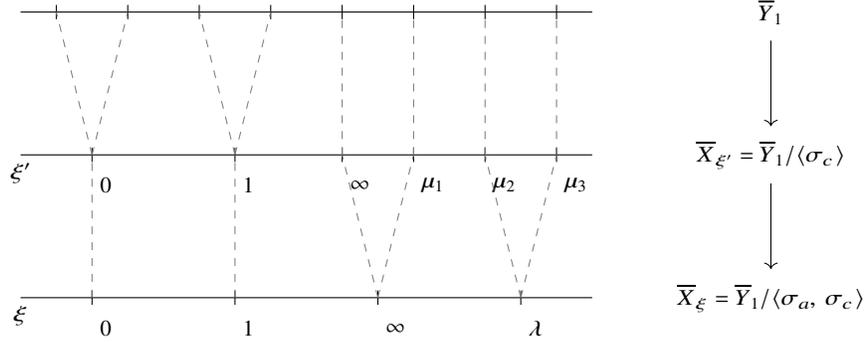
\begin{figure}[!htbp]
		\centering
		\begin{tikzpicture}[scale = 0.95]
		%axis
		\draw (0.5,4)  --  (8.5,4);
		\draw  (0.5,2) node[left, below]{$\xi'$} -- (8.5,2) ;
		\draw (0.5,0) node[left, below] {$\xi$} -- (8.5,0);
		%ticks
		\foreach \x/\y in {1, 2, 3, 4, 5, 6, 7, 8}
		\draw (\x,3.9) -- (\x,4.1) ;
		\foreach \x/\y in {1.5/$0$,3.5/$1$,5/$\infty$,6/$\mu_1$, 7/ $\mu_2$, 8 / $\mu_3$}
		\draw (\x,1.9) -- (\x,2.1) node[below=5mm, right] {\y };
		\foreach \x/\y in {1.5/ $0$,3.5/ $1$,5.5/ $\infty$,7.5/  $\lambda$}
		\draw (\x, -0.1) -- (\x, 0.1) node[below=5mm, right] {\y };
		%ramification (sigma_c)
		\foreach \x/\y in {1.5,3.5}
		\draw[draw = gray, dashed] (\x,0) -- (\x,2);
		\foreach \x/\y in {5.5,7.5}{
			\draw[draw = gray, dashed] (\x,0) -- (\x-.5,2); 
			\draw[draw = gray, dashed] (\x,0) -- (\x+.5,2); 
		}
		%ramification (sigma_a)
		\foreach \x/\y in {5,6,7,8}
		\draw[draw = gray, dashed] (\x,2) -- (\x,4);
		\foreach \x/\y in {1.5,3.5}{
			\draw[draw = gray, dashed] (\x,2) -- (\x-.5,4); 
			\draw[draw = gray, dashed] (\x,2) -- (\x+.5,4); 
		}
		
		%cover
		\draw[->] (11, 3.6)  -- (11,2.4);
		\draw[->](11,1.6) -- (11,0.4);
		\draw (11, 4) node{$\Yb_1$};
		\draw (11,2) node{$\Xb_{\xi'}=\Yb_1/\langle\sigma_c\rangle$};
		\draw (11,0) node {$\Xb_{\xi}=\Yb_1/\langle\sigma_a,\,\sigma_c\rangle$};
		\end{tikzpicture}
		\caption{The factorization of $\bar{f}$ restricted to $\Yb_1$.}
		\label{fig:ellipitc_curve}
	\end{figure}
	
	The coordinate $\xi$ can be then be written as a quotient of polynomials of 	degree smaller or equal to $2$ on $\xi'$, and from the conditions above we obtain
	\[
	\Xb_{\xi'}\to \Xb_{\xi}:\, \xi'\mapsto \xi=\frac{(\xi')^2}{2\xi'-1}.
	\]
	
	The degree-$2$ map $\Yb_1\to \Xb_{\xi'}$ is branched at the inverse
	image of $\xi(P_c')=:\lambda$ and $\xi=\infty$, i.e.,~at $\xi'=\infty,
	\mu_1=1/2$ and the two roots $\mu_2$ and $\mu_3$ of $t^2-2\lambda t+\lambda$. Using the assumptions
	on the parameters we find that $\lambda=\alpha/2(\alpha-a)$.
	Taking the cross ratio of these $4$ points, we find that
	\[
	j(\Yb_1)\equiv \frac{2^6(a^2+12BC)^3}{\Delta_a^2\cdot 4BC}\pmod{\pi}.
	\]
\end{example}

\begin{remark}\label{rem:blowup}
	\begin{itemize}
		\item[(a)] In Proposition \ref{Prop:norm1} we formulate certain minimality
		conditions on the parameters $A,B,C,a,b,c$. Assuming these
		conditions, Equation \eqref{eq: G} defines a model of $X$, which
		we denote by $\mathcal{X}_0$. In the situation of Example
		\ref{exa:blowup} this model is semistable. However, this is not true
		in general. It may happen that the special fiber $\Xb_0$ of
		$\mathcal{X}_0$ is not reduced. In Proposition
		\ref{prop:degeneratedconicproperties}.(ii) this case is
		characterized. The method to compute the stable model of $X$ still
		works, but one needs to go to an extension of $K$ to
		find a model of $X$ whose special fiber is reduced. More details can
		be found in the proof of Lemma
		\ref{lem:degeneratehyp}. 
		\item[(b)] In Example \ref{exa:blowup} we sketched a systematic method
		for computing the invariants of the components of positive genus of
		the stable reduction of a curve $Y$. This method has the advantage
		that it always works. Once one knows the type of the stable
		reduction of a curve $Y$, it is sometimes faster to explicitly write
		down a model $\mathcal{Y}$ of $Y$, that is not necessarily semistable, to
		calculate the invariants of the components of positive genus of the
		stable reduction of a curve $Y$.
		
		Namely, let $\mathcal{Y}$ be a not necessarily semistable model of $Y$
		and assume that the normalization of its reduced special fiber
		contains an irreducible component $Z$ of positive genus. Then the
		uniqueness of the stable model implies that~$Z$ is also an irreducible
		component of the normalization of the stable reduction
		$\Yb^{\text{stab}}$ of $Y$. Therefore an equation for $Z$ may be used
		to compute the invariants for the corresponding irreducible component
		of the stable reduction. This method is used, for example, in the proof of Lemma
		\ref{lemma:Case0a-c}. We refer to this proof for more details. 
	\end{itemize}
\end{remark}

\section{The smooth plane quartic case} \label{sec:setup}

Let $Y/K$ be a smooth projective plane quartic over a complete
discrete valuation field of characteristic $0$ and residue
characteristic $p \geq 0$ different from $2$, such that $\Aut_{\Kb}(Y)$ contains
a subgroup isomorphic to $V = C_2\times C_2$. Recall that, by
Lemma~\ref{lem:quartic_eq}, $Y$ admits an equation of the
form:
\[
Y :\, Ax^4+By^4+Cz^4+ay^2z^2+bz^2x^2 + cx^2y^2 = 0
\]
for some $A,B,C,a,b,c \in K$, possibly after replacing $K$ by a finite
extension, as in Equation~\eqref{eq: F}.

In Section \ref{sec:main_results} we identify the reduction type of a
given curve $Y/K$, where all possible types are listed in
Appendix~B. Before stating the results we discuss the
problem that an equation of the form \eqref{eq: F}, and hence the
coefficients $A,B,C,a,b,c$, for $Y/K$ are only unique up to
$K$-isomorphisms. Proposition~\ref{Prop:norm1} states a normalization
condition for the valuation of the coefficients of Equation \eqref{eq: F}. This
allows us in Proposition~\ref{prop:invariants} to exhibit a set of
invariants $\Iiii,\Iiii',\Iiii'',\Ivi$ for the special locus $\mathcal{M}_{3,V}^{\text{quar}}$.  Proposition~\ref{Prop:norm1} allows us to
assume that $\Iiii,\Iiii',\Iiii'',\Ivi$ have non-negative valuation
and at least one has valuation zero. The classification of the
reduction types of $Y$ in terms of the invariants is stated in
Theorems~\ref{THM:Main_NonDegConic} and~\ref{THM:Main_DegConic}. 

\subsection{Invariants} \label{sec:invariants_quartic}

Let $Y/K$ be given as in Equation \eqref{eq: F}; such equation can be normalized, as given by the following result:

\begin{proposition}\label{Prop:norm1} 
	After a suitable change of variables in Equation \eqref{eq: F} we can always assume that the valuation of at least one of the elements in each set $\{A,B,c\}$, $\{A,b,C\}$, $\{a,B,C\}$, $\{A,b,c\}$, $\{a,B,c\}$, $\{a,b,C\}$ is zero while all the others are non-negative.
\end{proposition}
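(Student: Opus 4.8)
The plan is to turn the statement into a linear feasibility problem for the valuations of the six coefficients and to solve it by an optimization argument. First I would record the changes of variables that preserve the shape of Equation~\eqref{eq: F}: the diagonal scalings $x\mapsto \lambda x$, $y\mapsto \mu y$, $z\mapsto \eta z$ together with an overall scaling $F\mapsto \rho F$ (permutations of $x,y,z$ also preserve the collection of six sets but will not be needed). Under these, the valuations transform by
\[
\nu(A)\mapsto \nu(A)+2\pi_1,\quad \nu(B)\mapsto \nu(B)+2\pi_2,\quad \nu(C)\mapsto \nu(C)+2\pi_3,
\]
\[
\nu(a)\mapsto \nu(a)+\pi_2+\pi_3,\quad \nu(b)\mapsto \nu(b)+\pi_1+\pi_3,\quad \nu(c)\mapsto \nu(c)+\pi_1+\pi_2,
\]
where $\pi_1=2\nu(\lambda)+\tfrac12\nu(\rho)$ and similarly for $\pi_2,\pi_3$. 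Writing the six monomials of the conic~\eqref{eq: G} as the six lattice points of the triangle with vertices $A,B,C$ and edge-midpoints $a,b,c$, the shift applied to each coefficient is twice the value at its lattice point of the affine function with corner-values $\pi_1,\pi_2,\pi_3$; and the six sets in the statement are exactly the three ``edges'' $\{A,B,c\},\{A,b,C\},\{a,B,C\}$ and the three ``corner triangles'' $\{A,b,c\},\{a,B,c\},\{a,b,C\}$ of this triangle.

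I thus want a vector $\pi=(\pi_1,\pi_2,\pi_3)$ with all six shifted valuations $\geq 0$ and with each of the six sets containing a coefficient whose shifted valuation equals $0$. Since the conditions ``shifted valuation $\geq 0$'' are lower bounds on the $\pi_i$, I would \emph{minimize} $\pi_1+\pi_2+\pi_3$ over the resulting polyhedron $\mathcal{P}$. Summing the three corner constraints gives $2(\pi_1+\pi_2+\pi_3)\geq -(\nu(A)+\nu(B)+\nu(C))$, so the objective is bounded below; as $\mathcal{P}$ is nonempty, a minimizer $\pi^{\ast}$ exists, and it remains to show that at $\pi^{\ast}$ each of the six sets carries a zero.

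For the three corner triangles this is immediate: the only shifted valuations depending on $\pi_1$ are those of $A,b,c$, so if all three were positive we could decrease $\pi_1$ slightly, stay feasible, and lower the objective, contradicting minimality; hence one of them vanishes, and symmetrically for $\pi_2,\pi_3$. The edges are the main obstacle, since forcing a zero on an edge by lowering a single $\pi_i$ may violate a constraint coming from an adjacent midpoint. I would treat $\{A,B,c\}$ as follows: assuming its three shifted valuations are all positive, minimality forces those of $b$ and of $a$ to vanish (otherwise decreasing $\pi_1$, resp.\ $\pi_2$, lowers the objective). Then the move $\pi_1\mapsto\pi_1-\varepsilon$, $\pi_3\mapsto\pi_3+\varepsilon$ keeps the $b$-constraint tight, preserves feasibility and the objective for small $\varepsilon$, and makes the $a$-constraint strict; from this new minimizer one may now decrease $\pi_2$ to lower the objective, a contradiction. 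So $\{A,B,c\}$ also carries a zero, and symmetrically for the other two edges.

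Finally I would convert $\pi^{\ast}$ into a genuine change of variables: the prescribed shift at $A$ has the form $4\nu(\lambda)+\nu(\rho)=2\pi_1$, and analogously at $B,C$, so choosing $\lambda,\mu,\eta,\rho$ with suitable valuations realizes $\pi^{\ast}$ (the midpoint shifts are then automatically correct). Since the data $\nu(A),\dots,\nu(c)$ are integers the minimizer may be taken rational, and any denominators are absorbed by passing to a finite extension of $K$, which is harmless because Equation~\eqref{eq: F} is in any case only available after such an extension. The only genuinely delicate point is the two-step edge argument above; everything else is bookkeeping.
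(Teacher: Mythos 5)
Your proof is correct, but it takes a genuinely different route from the paper's. The paper argues greedily, one set at a time: for a ``corner'' set such as $\{A,b,c\}$ it applies $(x,y,z)\mapsto(x/\pi_1,y,z)$ with $\nu(\pi_1)=\min(\nu(A)/4,\nu(b)/2,\nu(c)/2)$, which creates a zero in that set while leaving $\nu(a),\nu(B),\nu(C)$ untouched; for an ``edge'' set such as $\{A,B,c\}$ it first observes that normalization of the adjacent corner sets forces $\nu(a)=\nu(b)=0$, whence the substitution $(x,y,z)\mapsto(x/\pi_2,y/\pi_2,\pi_2 z)$ creates the required zero and is harmless because every set containing $C$ also contains $a$ or $b$. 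You replace this bookkeeping by a single linear program: minimize $\pi_1+\pi_2+\pi_3$ over the feasibility polyhedron, read off the corner zeros from the fact that each $\pi_i$ occurs in exactly the three constraints indexed by one corner set, and obtain the edge zeros via the exchange move $\pi_1\mapsto\pi_1-\varepsilon$, $\pi_3\mapsto\pi_3+\varepsilon$. I checked that move: it fixes the $b$-constraint and the objective, strictly loosens the $a$-constraint, preserves feasibility for small $\varepsilon$, and leaves all three constraints involving $\pi_2$ slack, contradicting minimality --- so the edge case is sound. Your formulation frees you from worrying about the order in which the six sets are treated and from re-verifying at each step that earlier normalizations survive (which is exactly the delicate point in the paper's argument); the paper's version is more constructive, producing the substitutions explicitly. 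Two minor points to tidy up: the data $\nu(A),\dots,\nu(c)$ are a priori only rational rather than integral (the valuation is normalized by $\nu(p)=1$ and $K$ may already be ramified), which still yields a rational minimizer; and attainment of the infimum should be cited as the standard fact that a linear functional bounded below on a nonempty polyhedron achieves its minimum there.
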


\begin{proof} Let \(o_r, o_s, o_t\in\OO\) be elements with valuation \(r,s, t\in \QQ_{\geq 0}\) respectively. We assume the valuation to be normalized by $\nu(p)=1$,
	where $p>0$ is the residue characteristic of $\nu$. We allow $K$ to be replaced by a finite
	extension (if necessary) such that $K$ contains an element of this valuation.
	
	Observe that the change of variables
	\[(x,y,z) \mapsto (o_rx, o_sy, o_tz)\]
	changes the valuation of the coefficients as follows
	\[
	\begin{aligned}
	\nu(A) &\mapsto \nu(A) + 4r, & \nu(B) &\mapsto \nu(B) + 4s,& \nu(C) &\mapsto \nu(C) + 4t,\\
	\nu(a) &\mapsto \nu(a)+ 2s+2t, & \nu(b) &\mapsto \nu(b)+2r+2t,& \nu(c) &\mapsto \nu(c)+2r+2s.
	\end{aligned}
	\]
	
	Let \(U\) be one of the sets listed in the statement, and assume that all the valuations of the parameters in \(U\) are positive. We will see how to obtain an isomorphic model for \(Y\) that is normalized with respect to all the sets for which the original model was normalized, and also with respect to \(U\).
	
	By symmetry it is enough to consider the cases where \(U\) is $\{A,b,c\}$ or $\{A,B,c\}$.
	\begin{itemize}
		\item For the first case consider the change
		\[(x,y,z) \mapsto (x/\pi_1, y, z)\]
		with $\pi_1\in\mathcal{O}$ an element of valuation $\nu(\pi_1) = \min(\nu(A)/4, \nu(b)/2, \nu(c)/2)$. With this change, the valuation of at least one among \(A,b,c\) becomes zero, and the valuations of \(a,B\), and \(C\) remain the same.
		\item Assume now that the model is normalized with respect to every set of the form \(\{L,m,n\}\), and assume that it is not with respect to \(\{A,B,c\}\). It follows that \(\nu(a) = \nu(b) = 0\), since otherwise we could normalize further with respect to either \(\{a,B,c\}\) or \(\{A,b,c\}\). Consider now the change
		\[(x,y,z) \mapsto (x/\pi_2, y/\pi_2, \pi_2z)\]
		with $\pi_2\in\mathcal{O}$ an element of valuation $\nu(\pi_2) = \min(\nu(A)/4, \nu(B)/4, \nu(c)/4)$. With this change, the valuation of at least one among \(A,B,c\) becomes zero, the valuations of \(a\) and \(b\) remain the same, and the valuation of \(C\) increases by $4\nu(\pi_2)$. The increase of the valuation of \(C\) does not affect the normalization with respect to any other set, since any set containing \(C\) also contains \(a\) or \(b\), which have valuation zero, as explained above. 
	\end{itemize}
   \hfill $\qed$ 
   \end{proof}

Dixmier--Ohno invariants~\cite{Dixmier,Ohno} 
classify isomorphism classes of plane quartics. 
Moreover, in \cite{LRS16} a reconstruction method is 
presented to compute the equation of a curve 
corresponding to a given tuple of Dixmier--Ohno 
invariants. In \cite[{Function \texttt{IsInstrataD4}}]{Jeroen} the expressions for Dixmier--Ohno invariants for quartics in the locus $\mathcal{M}_{3}^{\text{quar}}$ are given. However, it is more convenient to work with a smaller set of invariants, specifically for the locus $\mathcal{M}_{3,V}^{\text{quar}}$ instead of the general Dixmier--Ohno invariants; indeed explicit computation with these is unnecessarily complicated. 
We therefore consider the four invariants as in the following statement.

\begin{proposition}\label{prop:invariants}
	The elements
	\begin{eqnarray*}
		\Iiii=ABC,\qquad\Iiii'=A\Delta_a + B \Delta_b + C \Delta_c,\\
		\Iiii''=-4ABC + Aa^2 + Bb^2 + Cc^2 - abc,\qquad
		\Ivi=\Delta_a\Delta_b\Delta_c
	\end{eqnarray*}
	are invariants for the locus $\mathcal{M}_{3,V}^{\emph{quar}}$.
\end{proposition}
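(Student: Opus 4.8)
The plan is to realize $\M_{3,V}^{\text{quar}}$ as the quotient of the space of standard-form equations \eqref{eq: F} by the group of coordinate changes that preserve this form, and then to check that the four quantities transform as relative invariants under this group. First I would pin down the relevant group. By Lemma~\ref{lem:quartic_eq} every curve in $\M_{3,V}^{\text{quar}}$ is given by an equation \eqref{eq: F} on which the non-trivial elements of $V$ act by $(x:y:z)\mapsto(\pm x:\pm y:z)$. Two such equations define isomorphic curves exactly when they are related by an element $g\in\PGL_3$ carrying one $V$-action to the other, together with a rescaling of the equation. On the dense open locus where $\Aut_{\Kb}(Y)=V$ such a $g$ must normalize $V$, so $g\in N_{\PGL_3}(V)$. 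Each non-trivial element of $V$ is an involution whose eigenspaces are a coordinate point and the opposite coordinate line; hence conjugation by $g$ permutes the three coordinate points, forcing $g$ to be a monomial matrix. Thus $N_{\PGL_3}(V)=T\rtimes S_3$ is the group of monomial matrices, and the group $G$ we must test is generated by the diagonal rescalings $(x,y,z)\mapsto(\lambda x,\mu y,\nu z)$, the permutations of $(x,y,z)$, and the scaling $F\mapsto tF$ of the equation.

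Next I would record the action of each generator on $(A,B,C,a,b,c)$. The torus action is exactly the computation in the proof of Proposition~\ref{Prop:norm1}: up to the overall factor $t$ one has $(A,B,C)\mapsto(\lambda^4A,\mu^4B,\nu^4C)$ and $(a,b,c)\mapsto(\mu^2\nu^2a,\lambda^2\nu^2b,\lambda^2\mu^2c)$, and a short computation then gives $\Delta_a\mapsto t^2\mu^4\nu^4\Delta_a$, and similarly $\Delta_b\mapsto t^2\lambda^4\nu^4\Delta_b$ and $\Delta_c\mapsto t^2\lambda^4\mu^4\Delta_c$. A permutation $\pi\in S_3$ of $(x,y,z)$ simply permutes the three pairs $(A,a),(B,b),(C,c)$ and correspondingly the three discriminants $\Delta_a,\Delta_b,\Delta_c$; for instance the swap $x\leftrightarrow y$ sends $(A,B,C,a,b,c)$ to $(B,A,C,b,a,c)$ and interchanges $\Delta_a\leftrightarrow\Delta_b$ while fixing $\Delta_c$.

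Finally I would verify invariance. Each of $\Iiii=ABC$, $\Iiii'=A\Delta_a+B\Delta_b+C\Delta_c$, $\Iiii''=-4ABC+Aa^2+Bb^2+Cc^2-abc$, and $\Ivi=\Delta_a\Delta_b\Delta_c$ is a symmetric function of the three positions, hence invariant under $S_3$; here it is reassuring that $\Iiii''$ is precisely the conic discriminant $\Delta(X)$, whose invariance is classical. Under the torus and equation scaling one checks directly that $\Iiii,\Iiii',\Iiii''$ each transform by the single character $\chi=t^3(\lambda\mu\nu)^4$, whereas $\Ivi$ transforms by $\chi^2$. These are polynomial identities, so they hold identically on the whole stratum, not merely generically. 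Consequently the tuple $(\Iiii:\Iiii':\Iiii'':\Ivi)$ is a well-defined point of the weighted projective space $\PP(1,1,1,2)$ that depends only on the isomorphism class of $Y$, which is exactly the assertion that the four quantities are invariants for $\M_{3,V}^{\text{quar}}$. (Alternatively, one could deduce invariance by expressing them through the Dixmier--Ohno invariants as in \cite{Coppola}, but the direct character computation is self-contained.)

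The step I expect to be the main obstacle is the first one: justifying that $G$ — monomial matrices together with equation scaling — really is the complete group of transformations one must check. This rests on the normalizer computation $N_{\PGL_3}(V)=T\rtimes S_3$ and on the fact that a generic curve in the stratum satisfies $\Aut_{\Kb}(Y)=V$, so that every isomorphism between standard-form curves normalizes $V$; the loci with larger automorphism group cause no trouble because invariance is a closed (polynomial) condition and therefore propagates from the dense open locus. Once $G$ is identified, the remaining verifications are the routine character computations sketched above.
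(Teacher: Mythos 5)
Your proposal is correct and follows essentially the same route as the paper: isomorphisms between canonical (plane quartic) models are linear and must carry the $V$-action to the $V$-action, hence are monomial matrices, and the four expressions are then checked to be relative invariants under the diagonal torus, the $S_3$-permutations, and rescaling of the equation. You merely make explicit the character computations and the density argument for the locus with extra automorphisms, which the paper leaves implicit.
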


\begin{proof} 
	By Lemma \ref{lem:quartic_eq}.(1), any isomorphism between plane quartics in $\mathcal{M}_{3,V}$ has to preserve the automorphism group
	$$
	V\simeq \Bigg\langle\begin{pmatrix}
	-1 & 0 & 0\\0 & -1 & 0\\0 &0&1
	\end{pmatrix},\begin{pmatrix}
	-1 & 0 & 0\\0 & 1&0\\0 & 0 &-1
	\end{pmatrix}\Bigg\rangle \subset \Aut_K(Y).
	$$
	Since plane quartics are given by their canonical
	models, isomorphisms between them are linear. Moreover, those isomorphisms leave invariant by conjugation 
	the previous group. This implies that isomorphisms of plane quartics
	in $\mathcal{M}_{3,V}^{\text{quar}}$ are given by products of
	permutation matrices and diagonal matrices. Hence the elements
	from the statement of the lemma considered as element of
	$K[A,B,C,a,b,c]$ are invariants for the locus
	$\mathcal{M}_{3,V}^{\text{quar}}$.
   \hfill $\qed$ 
   \end{proof}

\begin{remark}\label{rem:DeltaX}
	Notice that $\Iiii''=\Delta(X)$.
\end{remark}

\begin{proposition}\label{Prop:Invariants_Generate} The invariants $\Iiii,\Iiii',\Iiii''$ and $\Ivi$ are generators for the invariants algebra of the locus $\mathcal{M}_{3,V}^{\emph{quar}}$. 
\end{proposition}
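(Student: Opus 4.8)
The plan is to exploit the explicit description of the isomorphisms obtained in the proof of Proposition~\ref{prop:invariants}: in the coordinates $(A,B,C,a,b,c)$ every isomorphism between curves in $\M_{3,V}^{\text{quar}}$ is a product of a permutation matrix and a diagonal matrix. Concretely, the diagonal substitution $(x,y,z)\mapsto(\lambda x,\mu y,\nu z)$ rescales the six coefficients with the weights recorded in the proof of Proposition~\ref{Prop:norm1}, while $S_3$ permutes $(A,B,C)$ and $(a,b,c)$ simultaneously. Thus the invariant algebra $\A$ is the ring of polynomials in $A,B,C,a,b,c$ that are semi-invariant of \emph{balanced} weight under the diagonal torus $T=\mathbb{G}_m^3$ (that is, transform by a power of the determinant character $(\lambda\mu\nu)^{\bullet}$) and are invariant under $S_3$. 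I would first note that the balanced-weight monomials $A^{n_A}B^{n_B}C^{n_C}a^{n_a}b^{n_b}c^{n_c}$ are precisely those with $2n_A+n_b+n_c=2n_B+n_a+n_c=2n_C+n_a+n_b$, and that summing these three equalities forces the total degree to be a multiple of $3$.

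Next I would determine the ring $S$ of balanced-weight semi-invariants, before imposing $S_3$-symmetry. A short enumeration shows that the degree-$3$ balanced monomials are exactly $P_1:=ABC$, $Q_A:=Aa^2$, $Q_B:=Bb^2$, $Q_C:=Cc^2$ and $R:=abc$. The crucial point is that these five monomials already generate the balanced-weight semigroup, i.e.\ they are its Hilbert basis; combined with the single binomial relation $Q_AQ_BQ_C=P_1R^2$ (coming from $(Aa^2)(Bb^2)(Cc^2)=(ABC)(abc)^2$), this identifies $S$ with the affine toric fourfold $K[P_1,Q_A,Q_B,Q_C,R]/(Q_AQ_BQ_C-P_1R^2)$. \textbf{This semigroup-generation statement is the main obstacle}: it amounts to the finite combinatorial check that no indecomposable balanced monomial of degree $\geq 6$ exists, which one verifies either by hand, using the $S_3$-symmetry to reduce the cases, or by a Hilbert-basis computation.

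Granting this, the $S_3$-action fixes $P_1$ and $R$ and permutes $Q_A,Q_B,Q_C$, so $S^{S_3}$ is generated by $P_1,R$ and the elementary symmetric functions $e_1,e_2,e_3$ of the $Q_i$. Since the toric relation reads $e_3=Q_AQ_BQ_C=P_1R^2$, the generator $e_3$ is redundant; and as $S$ is a toric domain of Krull dimension $4$ and $S_3$ is finite, $S^{S_3}$ is again a domain of dimension $4$. Being generated by the four elements $P_1,R,e_1,e_2$, these must be algebraically independent, so $\A=S^{S_3}=K[P_1,R,e_1,e_2]$ is a polynomial ring. Finally I would match this with the stated invariants through the triangular change of variables $\Iiii=P_1$, $\Iiii'=e_1-12P_1$, $\Iiii''=e_1-4P_1-R$, and $\Ivi=R^2-4e_2+16P_1e_1-64P_1^2$, the last identity being the expansion of $\Delta_a\Delta_b\Delta_c$ in the generators. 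The linear part expressing $(\Iiii,\Iiii',\Iiii'')$ in terms of $(P_1,R,e_1)$ has determinant $1$, hence is invertible over $K$, and the fourth equation then solves for $e_2$ after dividing by $4$, which is legitimate since $\Char K\neq 2$. Therefore $K[\Iiii,\Iiii',\Iiii'',\Ivi]=K[P_1,R,e_1,e_2]=\A$, which proves that the four invariants generate the invariant algebra (and in fact that it is the polynomial ring on them).
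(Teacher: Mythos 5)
Your proof is correct, and it takes a genuinely different route from the paper's. The paper first normalizes $A=B=C=1$, so that the residual monomial transformations act through an explicit finite group $G$ on $K[a,b,c]$; it then computes $K[a,b,c]^G$ with Derksen's algorithm in \Magma\ over $\FF_3$ and $\QQ$, and invokes Molien's formula to see that the Hilbert series, hence the degrees of a generating set, are the same in every characteristic $\neq 2$. You instead keep all six coefficients, identify the invariant algebra inside $K[A,B,C,a,b,c]$ as the $S_3$-invariant part of the span of balanced monomials, and argue combinatorially — no machine computation and no case split on the characteristic. The step you flag as the ``main obstacle'' (that $P_1=ABC$, $Q_A=Aa^2$, $Q_B=Bb^2$, $Q_C=Cc^2$, $R=abc$ generate the semigroup of balanced exponent vectors) in fact admits a two-line proof rather than a brute-force search: balancedness gives $2(n_A-n_B)=n_a-n_b$ and cyclic analogues, so $n_a\equiv n_b\equiv n_c\pmod 2$; taking $r=\min(n_a,n_b,n_c)$, $q_A=(n_a-r)/2$, etc., and $p=n_A-q_A$ yields a consistent decomposition with non-negative exponents. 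One point worth making explicit, since you work in all characteristics including $3$ where $|S_3|$ is not invertible: the passage to $S^{S_3}$ should be justified via the orbit-sum basis of the permutation action on balanced monomials (each orbit sum, written with $\min(q_A,q_B,q_C)=0$, equals $P_1^pR^r$ times a monomial symmetric function of the $Q_i$, hence is an integral polynomial in $P_1,R,e_1,e_2,e_3$ with $e_3=P_1R^2$), rather than via a Reynolds operator. The concluding change of variables is verified correctly and uses only that $2$ is invertible. What your approach buys is a characteristic-free, computer-free argument together with the extra information that the invariant ring is a polynomial ring on the four generators; what the paper's approach buys is brevity, at the cost of relying on \Magma\ and on Molien's formula to interpolate between characteristics.
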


\begin{proof} 	
	In general, for any characteristic different from $2$, we can proceed as follows: first, we normalize to obtain $A=B=C=1$. With this normalization the group of linear transformations acts on $K[a,b,c]$ via the finite group 
	\[G:=\Bigg\langle \begin{pmatrix}0 & 0 & 1\\ 1 & 0 & 0\\ 0 & 1 & 0\end{pmatrix},\begin{pmatrix}0 & 1 & 0\\ 1 & 0 & 0\\0 & 0 & 1\end{pmatrix},\begin{pmatrix}i & 0 & 0\\ 0 & i & 0\\ 0 & 0 & -1\end{pmatrix}\Bigg\rangle.\]
	
	Then, we compute the invariants $K[a,b,c]^G$ via Derksen's algorithm \cite[Algorithm 4.1.9]{Kemper} with the \texttt{Magma}~\cite{Magma} function \texttt{FundamentalInvariants}, and we obtain generators: $1, a^2 + b^2 + c^2,\,abc,\,a^2b^2 + a^2c^2 + b^2c^2$. After de-normalizing we get the weight $3,3,3,6$ invariants $ABC,\,Aa^2+Bb^2+Cc^2,\,abc,\,ABa^2b^2+BCb^2c^2+CAc^2a^2$. 
	
	In order to do the computations in \texttt{Magma} we needed to fix the field, so we fixed the fields $\mathbb{F}_3$ and $\mathbb{Q}$ because the order of $G$ is a product of a power of $2$ and a power of $3$. For any other characteristic $p>3$ and because of Molien's Formula \cite[Theorem 3.2.2]{Kemper} we always find the same Hilbert series $H(K[a,b,c]^G,t)=H(\mathbb{Q}[a,b,c]^G,t)=\frac{1}{(1-t^2)(1-t^3)(1-t^4)}$, so $K[a,b,c]^G$ is generated by $3$ invariants of weights $2,\,3$ and $4$. Since the $3$ expressions $a^2 + b^2 + c^2,\,abc,\,a^2b^2 + a^2c^2 + b^2c^2$ are invariants for any characteristic and the one of weight $4$ is not a multiple of the square of the one of weight $2$, they are generators of $K[a,b,c]^G$ for all characteristics different from $2$.
   \hfill $\qed$ 
   \end{proof}

\begin{remark} In characteristic $0$ we know that the Dixmier--Ohno invariants generate the invariant ring of smooth plane quartics \cite{Dixmier,Ohno}. Work in progress by  R.~Lercier, E.~Lorenzo García, and C.~Ritzenthaler aims to show that this also holds in characteristic $p>7$. The Dixmier--Ohno invariants of plane quartics can be written in terms of the invariants $\Iiii,\Iiii',\Iiii''$ and $\Ivi$ (see \cite[\texttt{InvariantsGenerateDO}]{Coppola} for the details), which gives another proof of Proposition \ref{Prop:Invariants_Generate} in the characteristic $0$ case. 
\end{remark}

The invariants $\Iiii,\Iiii',\Iiii'',\Ivi$ are homogeneous of weight
$3,3,3,6$, respectively.  Moreover, considered as functions on the
weighted projective space $\PP^3_{3,3,3,6}$, they are algebraically independent. 
To state the classification theorems for the
reduction types of plane quartics it is also convenient to define
\begin{align*}
I = AB\Delta_a\Delta_b + AC\Delta_a\Delta_c + BC \Delta_b\Delta_c.
\end{align*}

The invariant $I$ is in the algebra generated by $\Iiii,\Iiii',\Iiii'',\Ivi$.
Concretely, we have
\begin{equation}\label{eq:I}
4I + \Ivi-\Iiii'^2+16\Iiii\Iiii''+2\Iiii'\Iiii''-\Iiii''^2 =0.
\end{equation}

We now restate Proposition \ref{Prop:norm1} in terms of these invariants:
\begin{corollary}\label{COR:Normalization_Invariants}
	After a change of variables as in Proposition~\ref{Prop:norm1} we can always work with an integer model as in Equation \eqref{eq: F} such that all the valuations of $\Iiii,\Iiii',\Iiii''$ and $\Ivi$ are non-negative and at least one is equal to zero.
\end{corollary}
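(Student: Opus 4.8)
The plan is to prove the two assertions separately: the non-negativity of all four valuations, which is essentially immediate, and the existence of one vanishing valuation, which is the real content.

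For non-negativity I would first record that after the change of variables of Proposition~\ref{Prop:norm1} every coefficient $A,B,C,a,b,c$ lies in $\OO$: each of them occurs in at least one of the six sets, and within each set all the valuations are non-negative. Since the residue characteristic is different from $2$ we have $\nu(2)=0$, so each $\Delta_i$ (for $i\in\{a,b,c\}$), being of the shape $(\cdot)^2-4(\cdot)(\cdot)$ in elements of $\OO$, again lies in $\OO$. As each of $\Iiii,\Iiii',\Iiii'',\Ivi$ is a sum of products of such elements, all four have non-negative valuation.

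For the vanishing I would argue by contradiction, assuming $\nu(\Iiii),\nu(\Iiii'),\nu(\Iiii''),\nu(\Ivi)$ are all strictly positive, and organise the proof by the number $n$ of non-units among $A,B,C$. Here I would exploit the $S_3$-symmetry of the set-up that simultaneously permutes the pairs $(A,a),(B,b),(C,c)$ and fixes all four invariants, so that the ``without loss of generality'' reductions are legitimate. Since $\nu(\Iiii)=\nu(A)+\nu(B)+\nu(C)$, the case $n=0$ is excluded outright. If $n=3$, the normalization conditions on $\{a,B,C\}$, $\{A,b,C\}$, $\{A,B,c\}$ force $a,b,c$ to be units; then $\nu(4BC),\nu(4AC),\nu(4AB)>0$ make each $\Delta_i$ a unit, so $\nu(\Ivi)=0$, a contradiction. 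If $n=2$, say $A,B$ non-units, the condition on $\{A,B,c\}$ forces $c$ a unit, so $\Delta_c$ is a unit and the reduction of $\Iiii'=A\Delta_a+B\Delta_b+C\Delta_c$ equals $\overline{C}\,\overline{\Delta_c}\neq 0$; thus $\nu(\Iiii')=0$, again a contradiction.

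The main obstacle is the case $n=1$, say $A$ a non-unit and $B,C$ units, where no single invariant is visibly a unit and one must chain the three positivity hypotheses together. The condition on $\{A,b,c\}$ gives $\min(\nu(b),\nu(c))=0$, so I may assume $b$ is a unit, whence $\Delta_b$ is a unit. In $\Iiii'=A\Delta_a+B\Delta_b+C\Delta_c$ the first term dies modulo $\pi$ and $\nu(B\Delta_b)=0$; if $\nu(\Delta_c)>0$ the reduction would be $\overline{B}\,\overline{b}^2\neq 0$, contradicting $\nu(\Iiii')>0$. Hence $\nu(\Delta_c)=0$, which (since $\nu(4AB)>0$) forces $c$ a unit, and then $\nu(\Iiii')>0$ yields the relation $\overline{B}\,\overline{b}^2+\overline{C}\,\overline{c}^2=0$ in $k$. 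With $\Delta_b,\Delta_c$ now units we get $\nu(\Ivi)=\nu(\Delta_a)$, so $\nu(\Ivi)>0$ forces $\nu(\Delta_a)>0$ and therefore $a$ a unit (as $\nu(4BC)=0$). Finally, reducing $\Iiii''=-4ABC+Aa^2+Bb^2+Cc^2-abc$ modulo $\pi$ kills the two terms containing $A$, and the relation above collapses the rest, giving $\overline{\Iiii''}=-\overline{abc}$, which is a unit since $a,b,c$ are all units — contradicting $\nu(\Iiii'')>0$. This closes the last case. The delicate point, and the step I would take most care with, is precisely this interplay of $\Ivi$, $\Iiii'$ and $\Iiii''$ in the single-non-unit case.
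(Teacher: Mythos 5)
Your proof is correct, and it follows the same basic strategy as the paper's --- argue by contradiction from the assumption that all four valuations are positive, and play the six normalization sets of Proposition~\ref{Prop:norm1} against the unit/non-unit status of $A,B,C,a,b,c$ --- but with a different decomposition. The paper takes $\nu(A)>0$ without loss of generality, extracts the three conditions $\nu(Bb^2+Cc^2)>0$, $\nu(abc)>0$ and $\nu(BCb^2c^2)>0$ from the positivity of $\nu(I_3')$, $\nu(I_3'')$ and $\nu(I_6)$ respectively, and checks that every resulting configuration makes all three elements of one of the six sets non-units, which is an immediate contradiction. You instead stratify by the number $n$ of non-units among $A,B,C$, use the normalization to force various of $a,b,c$ to be units, and then exhibit an invariant of valuation zero. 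Both organizations work; the main difference shows in your case $n=1$, which the paper dispatches in one line (with $B,C$ units, $\nu(BCb^2c^2)>0$ together with $\nu(Bb^2+Cc^2)>0$ forces $\nu(b),\nu(c)>0$, contradicting the set $\{A,b,c\}$, with no need for $I_3''$ or for the relation $\overline{B}\,\overline{b}^2+\overline{C}\,\overline{c}^2=0$ in $k$), whereas your chain through $I_3'$, then $I_6$, then $I_3''$ is longer but equally sound. Your explicit justification of the $S_3$-symmetry underlying the ``without loss of generality'' steps and of the integrality of the invariants is a welcome addition that the paper leaves implicit.
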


\begin{proof}
	Suppose that all four invariants have positive valuation. Then we have $\nu(\Iiii) = \nu(ABC)>0$, and without loss of generality we can assume that \(\nu(A)\) is positive. Then, since $\nu(\Iiii')$, $\nu(\Iiii'')$ and \(\nu(\Ivi)\) are positive, we obtain $\nu(Bb^2+Cc^2), \nu(abc), \nu(BCb^2c^2)>0$, respectively. So we are in one of the following scenarios:
	\begin{itemize}
		\item $\nu(B), \nu(C), \nu(abc)>0$, or
		\item $\nu(B), \nu(c)>0$ or, symmetrically, $\nu(C), \nu(b)>0$, or
		\item $\nu(b), \nu(c)>0$,
	\end{itemize}
	but all contradict the normalization conditions in Proposition~\ref{Prop:norm1}. Hence the corollary follows.
   \hfill $\qed$ 
   \end{proof}

\subsection{Main results} \label{sec:main_results}

In this section we characterize the possible reduction types of a
plane quartic curve~$Y$ in terms of the valuations of the four
invariants $\Iiii,\Iiii',\Iiii'',\Ivi$.  We assume that $Y$ is given
by an equation of the form $Ax^4 +By^4 + Cz^4 + ay^2z^2 + bx^2z^2 +
cx^2y^2 = 0$, normalized as in Proposition~\ref{Prop:norm1}; in particular by
Corollary \ref{COR:Normalization_Invariants} all the invariants have
non-negative valuation, and  at least one of them has valuation zero. Additionally, we use the invariant $I$ determined by Equation \eqref{eq:I}.

In terms of the
invariants, we have
\begin{equation}\label{eq:Delta}
\Delta(Y)=-2^{-20} \Iiii\Iiii''^4\Ivi^2.
\end{equation}

\begin{proposition}\label{prop:potgoodredquartics}
	Let $Y$ be a plane quartic defined by 
	$$Ax^4 +By^4 + Cz^4
	+ ay^2z^2 + bx^2z^2 + cx^2y^2 = 0$$ normalized as in
	Proposition~\ref{Prop:norm1}. Let $\Delta(Y)$ be the discriminant of $Y$. 
	The following statements are equivalent:
	\begin{enumerate}[(i)]
		\item $Y$ has potentially good reduction,
		\item $\nu(\Delta(Y))=0$,
		\item $\nu(\Iiii)=\nu(\Iiii'')=\nu(\Ivi)=0$.
	\end{enumerate}
\end{proposition}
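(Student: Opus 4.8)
The plan is to prove the cyclic chain of implications (iii) $\Rightarrow$ (ii) $\Rightarrow$ (i) $\Rightarrow$ (iii), exploiting the explicit formula for the discriminant of $Y$ in terms of the invariants. The starting point is Equation~\eqref{eq:Delta}, which gives
\[
\Delta(Y)=-2^{-20}\Iiii\,\Iiii''^4\,\Ivi^2.
\]
Since $\Char(K)\neq 2$, the factor $2^{-20}$ is a unit, so $\nu(\Delta(Y))=\nu(\Iiii)+4\nu(\Iiii'')+2\nu(\Ivi)$. Under the normalization of Proposition~\ref{Prop:norm1}, Corollary~\ref{COR:Normalization_Invariants} guarantees that each of $\nu(\Iiii),\nu(\Iiii''),\nu(\Ivi)$ is non-negative. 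The implication (iii) $\Rightarrow$ (ii) is then immediate, and the converse implication (ii) $\Rightarrow$ (iii) follows because a sum of non-negative integers vanishes only if each summand does; this already establishes the equivalence of (ii) and (iii).

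For the equivalence with (i) I would use the standard criterion that a smooth plane quartic has good reduction at a model if and only if its defining equation reduces to a smooth plane quartic, which is detected by the non-vanishing of the discriminant of the reduced equation. Concretely, (ii) $\Rightarrow$ (i): if $\nu(\Delta(Y))=0$ then, after the normalization, the reduction of Equation~\eqref{eq: F} is a plane quartic whose discriminant is the reduction of $\Delta(Y)$, hence a unit, so the reduced curve is smooth and $Y$ has good reduction over $\OO$ itself (in particular potentially good reduction). The subtlety I expect here is that Proposition~\ref{Prop:norm1} only normalizes the \emph{valuations} of the six coefficients, so one must check that the integral model given by the normalized Equation~\eqref{eq: F} indeed has a reduction that is a genuine (geometrically reduced, degree-$4$) plane quartic; the condition $\nu(\Delta(Y))=0$ is precisely what rules out degeneration of the degree or the appearance of singularities.

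The genuinely delicate direction is (i) $\Rightarrow$ (ii), equivalently (i) $\Rightarrow$ (iii). Here one cannot simply read off the discriminant of a single chosen equation, since potentially good reduction allows passage to a finite extension and a different model, and the invariants $\Iiii,\Iiii'',\Ivi$ might a priori all acquire positive valuation on the given normalized model even when $Y$ has potentially good reduction. The cleanest argument is to invoke that $\Iiii,\Iiii',\Iiii'',\Ivi$ are weighted-homogeneous generators of the invariant algebra of $\M_{3,V}^{\text{quar}}$ (Proposition~\ref{Prop:Invariants_Generate}), of respective weights $3,3,3,6$, so their ratios of matching weight are genuine isomorphism invariants of the point in the weighted projective space $\PP^3_{3,3,3,6}$. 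Potentially good reduction means the corresponding point of (the quartic locus in) $\Mb_3$ extends to a smooth point over the residue field; since the discriminant $\Delta(Y)$, up to the unit $2^{-20}$, is the monomial $\Iiii\,\Iiii''^4\,\Ivi^2$ of weight $3+12+12=27$, the hypothesis of potentially good reduction forces this monomial invariant to be non-vanishing in the reduction, which, combined with Corollary~\ref{COR:Normalization_Invariants}, yields $\nu(\Iiii)=\nu(\Iiii'')=\nu(\Ivi)=0$. The main obstacle will be making this last step rigorous: one must argue that potential good reduction of $Y$ is equivalent to the non-vanishing in the reduction of the appropriate discriminant-type invariant, independently of the chosen model and extension, which is where the interpretation of $\Delta(Y)$ as a weighted-homogeneous invariant (rather than a coefficient-dependent quantity) is essential.
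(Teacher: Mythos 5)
Your reduction of the problem to the single implication (i)~$\Rightarrow$~(iii) is sound: the equivalence of (ii) and (iii) via Equation~\eqref{eq:Delta} together with Corollary~\ref{COR:Normalization_Invariants}, and the implication (ii)~$\Rightarrow$~(i) by reducing the normalized integral equation, are both correct and essentially match the paper (your direct proof of (ii)~$\Rightarrow$~(iii) is in fact slightly cleaner than the paper's detour through (i)).

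The gap is in (i)~$\Rightarrow$~(iii), and it is not merely a matter of tightening the sketch you give: as stated, that sketch would prove something false. If ``potentially good reduction'' is read literally (the stable reduction is smooth), it does \emph{not} force the weight-$27$ invariant $\Iiii\Iiii''^4\Ivi^2$ to be a unit after normalization. A curve in this stratum can have smooth stable reduction which is a \emph{hyperelliptic} genus-$3$ curve; then no integral plane-quartic model has unit discriminant, and indeed in case (c.i) of Theorem~\ref{THM:Main_DegConic} (Lemma~\ref{lem:degeneratehyp}) one has good hyperelliptic reduction while $\nu(\Iiii'')>0$ and $\nu(\Ivi)>0$. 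The proposition must therefore be read, as the introduction makes explicit, as characterizing potentially good \emph{quartic} reduction. The missing ingredient is the nontrivial fact that good quartic reduction can always be realized by an integral model in the standard form~\eqref{eq: F} (hence compatible with the $V$-action) with $\nu(\Delta(Y))=0$; the paper imports this from Theorem~3.15 of \cite{LLLR}. Given such a model, its four invariants have non-negative valuation and satisfy $\nu(\Iiii)=\nu(\Iiii'')=\nu(\Ivi)=0$; comparing with the normalized model of the statement via the weight-$w$ scaling $I_w\mapsto\lambda^w I_w$ and invoking Corollary~\ref{COR:Normalization_Invariants} (at least one invariant of the normalized model is a unit) forces $\nu(\lambda)=0$ and hence (iii). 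Your appeal to Proposition~\ref{Prop:Invariants_Generate} and to the point in $\PP^3_{3,3,3,6}$ neither produces such a model nor excludes the hyperelliptic degeneration, so the step you flag as ``the main obstacle'' is a genuine gap rather than a routine verification.
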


\begin{proof} If $\nu(\Delta(Y))=0$, then the curve has good reduction. If $Y$ has potentially good reduction then over a finite extension of the base field, it admits a plane quartic integral model with good reduction, and because of Theorem $3.15$ in \cite{LLLR}, this model can be taken in the form $$Ax^4 +By^4 + Cz^4
	+ ay^2z^2 + bx^2z^2 + cx^2y^2 = 0$$ with $A,B,C,a,b,c\in\mathcal{O}$, $\nu(\Delta(Y))=0$. In particular, with $\nu(\Iiii),\nu(\Iiii'),\nu(\Iiii''),\nu(\Ivi)\geq0$ and hence with  $\nu(\Iiii)=\nu(\Iiii'')=\nu(\Ivi)=0$. Finally, the fact that $(iii)$ implies $(ii)$ is immediate from the expression for $\Delta(Y)$ in Equation \eqref{eq:Delta}.
   \hfill $\qed$ 
   \end{proof}

In what follows we assume that
$Y$ has geometric bad reduction. Our result concerning the characterization of the possible reduction types is divided into two
statements, depending on whether $\nu(\Iiii'')$ is zero
(Theorem \ref{THM:Main_NonDegConic}) or positive
(Theorem \ref{THM:Main_DegConic}). Recall from Remark \ref{rem:DeltaX}
that $I_3''$ is the discriminant of the conic $X$ defined
by Equation \eqref{eq: G}. Hence the two cases correspond to the reduction
of this conic being non-degenerate or degenerate (see also
Remark \ref{rem:blowup}.(b)).

Theorem \ref{THM:Main_NonDegConic} is proved in
Section \ref{sec:main_proof_nondeg} and
Theorem \ref{THM:Main_DegConic} in
Section \ref{sec:main_proofs_deg}. In these sections we  also
give the Igusa invariants (respectively~the $j$-invariant) of the
irreducible components of the stable reduction of $Y$ with positive
genus in each of the cases. 

\begin{theorem}\label{THM:Main_NonDegConic}
	Let $Y$ be a plane quartic curve defined by 
	\[Ax^4 +By^4 + Cz^4 + ay^2z^2 + bx^2z^2 + cx^2y^2 = 0\]
	normalized as in
	Proposition~\ref{Prop:norm1}. Let $\Delta(Y)$ be the discriminant of
	the quartic $Y$, and let $\Delta(X)$ be the discriminant of
	the conic $X$ defined by Equation \eqref{eq: G}, which we assume to have valuation $0$, that
	is, \(\nu(\Iiii'') = 0\).
	
	Then if the valuation of \(\Delta(Y)\) is positive, $Y$ has geometric
	bad reduction and one of the cases in Table~\ref{tab:non-deg} occurs.
\end{theorem}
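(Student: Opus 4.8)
The plan is to reduce the statement to the combinatorial classification already established in Theorem~\ref{FromGraphtoRedType}, and then to translate the arithmetic hypotheses on the invariants into conditions on the valuations of the coefficients $A,B,C,a,b,c$ governing the stably marked model of $(X,D)$. First I would note that the hypothesis $\nu(\Iiii'')=\nu(\Delta(X))=0$ means that the conic $X$ has a smooth model $\X_0$ over $\OO$ defined by Equation~\eqref{eq: G}, whose special fiber $\Xb_0$ is a smooth conic, hence isomorphic to $\PP^1_k$. This is the ``non-degenerate conic'' case alluded to in Remark~\ref{rem:blowup}.(b). By Equation~\eqref{eq:Delta} we have $\nu(\Delta(Y)) = \nu(\Iiii) + 4\nu(\Iiii'') + 2\nu(\Ivi)$ (up to the unit $-2^{-20}$), and since $\nu(\Iiii'')=0$, positivity of $\nu(\Delta(Y))$ is equivalent to $\nu(\Iiii)>0$ or $\nu(\Ivi)>0$; by Proposition~\ref{prop:potgoodredquartics} this is precisely the condition that $Y$ has geometric bad reduction.

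The core of the proof is then a case analysis on the valuations of $\Iiii = ABC$ and $\Ivi = \Delta_a\Delta_b\Delta_c$. Since the conic reduces smoothly, the branch points $P_i, P_i'$ of $f$ specialize to points on $\Xb_0\cong\PP^1_k$, and the decorated graph $(\Xb,\Db)$ is built by blowing up $\X_0$ wherever branch points collide on $\Xb_0$. The plan is to determine, for each pattern of collisions, which decorated graph from Appendix~\ref{sec:adm} arises, exactly as carried out in Example~\ref{exa:blowup}. The key computations are: two branch points $P_i, P_i'$ with the same inertia generator $\sigma_i$ collide on $\Xb_0$ precisely when $\nu(\Delta_i)>0$ (since $P_i,P_i'$ are the two roots of $p_i$, whose discriminant is $4\Delta_i$), and branch points with different inertia generators collide according to the relative sizes of $\nu(A),\nu(B),\nu(C)$ and $\nu(a),\nu(b),\nu(c)$. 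Using the normalization of Proposition~\ref{Prop:norm1} one sees that at most one of $A,B,C$ and a controlled combination of the off-diagonal coefficients can have positive valuation, which bounds the number of collision patterns and hence the number of rows in Table~\ref{tab:non-deg}. For each surviving pattern one reads off the type of $\Xb$, then applies the recipe of Theorem~\ref{FromGraphtoRedType} (via Lemmas~\ref{lem:graph} and~\ref{lem:adm}) to pass from the decorated graph to the reduction type of $\Yb$.

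The main obstacle will be the bookkeeping required to show the case list is \emph{exhaustive and mutually exclusive}: one must verify that every admissible combination of valuations satisfying $\nu(\Iiii'')=0$ and $\nu(\Delta(Y))>0$, after normalization, falls into exactly one row of Table~\ref{tab:non-deg}, and that no spurious cases survive. This requires carefully exploiting the symmetry of the invariants under the $S_3$-action permuting $(A,B,C)$ together with $(a,b,c)$, so that the many sign- and index-permutations collapse to the representative cases listed. A secondary subtlety is handling the boundary configurations where several collisions coincide (for instance when $\nu(\Delta_a)$, $\nu(BC)$, and $2\nu(a)$ coincide or are ordered differently), since these determine whether an extra component appears in $\Xb$; here the explicit coordinate computations of Example~\ref{exa:blowup}, comparing $\nu(\Delta_i)$ with $\nu$ of the relevant product of diagonal coefficients, decide the type. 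Once the correspondence between valuation patterns and decorated graphs is pinned down, the identification of the resulting reduction type is immediate from Appendix~\ref{sec:adm}, and the deferred computation of the Igusa and $j$-invariants of the positive-genus components proceeds by the cross-ratio method illustrated in Example~\ref{exa:blowup}.
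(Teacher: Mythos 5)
Your overall strategy coincides with the paper's: exploit $\nu(\Iiii'')=0$ to get a smooth model $\X_0$ of the conic whose special fiber $\Xb_0$ is a component of $\Xb$, read off the decorated graph from the collision pattern of the six branch points on $\Xb_0$ (with $P_i,P_i'$ colliding iff $\nu(\Delta_i)>0$, and cross-pair collisions governed by $\nu(A),\nu(B),\nu(C)$ — this is exactly Lemma~\ref{lemma:specialization_of_points}), resolve the ambiguous configurations by the explicit coordinate computations of Example~\ref{exa:blowup}, and then pass to the reduction type via Theorem~\ref{FromGraphtoRedType} and compute the invariants of the positive-genus components by cross-ratios. This is precisely how the paper proceeds in Lemmas~\ref{lemma:Case0a-c}--\ref{lemma:CaseNDh}.

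There is, however, one concrete error that would derail the exhaustiveness argument you correctly identify as the main obstacle. You assert that the normalization of Proposition~\ref{Prop:norm1} forces \emph{at most one} of $A,B,C$ to have positive valuation. This is false: each of the six normalization sets contains at least one of $a,b,c$ or a different diagonal coefficient, so if $\nu(a)=\nu(b)=\nu(c)=0$ then $\nu(A),\nu(B),\nu(C)$ may all be positive simultaneously. Indeed, cases (g) and (h) of Table~\ref{tab:non-deg} (decorated graphs III.3 and IV*.3, reduction types Loop and Looop) occur exactly when two, respectively all three, of $\nu(A),\nu(B),\nu(C)$ are positive while all $\nu(\Delta_i)=0$; see Lemmas~\ref{lemma:CaseNDg} and~\ref{lemma:CaseNDh}. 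A case enumeration built on your bound would omit these rows. The correct dichotomy is between the conditions on the invariants $\nu(\Iiii)$, $\nu(\Iiii')$, $\nu(\Ivi)$, $\nu(I)$, which jointly detect \emph{how many} of the $\nu(A),\nu(B),\nu(C)$ and of the $\nu(\Delta_i)$ are positive (e.g.\ in case (g) the combination $\nu(\Iiii)>0$, $\nu(\Ivi)=0$, $\nu(I)>0$ forces exactly two diagonal coefficients to degenerate). Once this is repaired, the rest of your plan goes through as in the paper.
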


\renewcommand{\arraystretch}{1.25}
\begin{table}[!ht]
	\caption{Cases of Theorem~\ref{THM:Main_NonDegConic}.}
	\label{tab:non-deg}
	
	\begin{adjustbox}{max width = \textwidth}
		\begin{tabular}{cccccc>{\centering\arraybackslash}m{5.5cm}ccc}
			\hline\hline
			&       \(\nu(I_3)\)       &      \(\nu(I_3')\)       &     \(\nu(I_3'')\)      &      \(\nu(I_6)\)       &       \(\nu(I)\)        & Other conditions                                                                & Decorated graph & Stable curve &                 Lemma                 \\ \hline\hline
			(a)   &         \(= 0\)          &                          &         \(=0\)          &         \(>0\)          &         \(=0\)          &                                                                                 &       II.3       &     Loop     & \multirow{3}{*}{\ref{lemma:Case0a-c}} \\ \cline{1-9}
			(b)   &         \(= 0\)          &         \(= 0\)          &         \(=0\)          &         \(>0\)          &         \(>0\)          &                                                                                 &      III.1       &     DNA      &                                       \\ \cline{1-9}
			(c)   &         \(= 0\)          &         \(> 0\)          &         \(=0\)          &         \(>0\)          &         \(>0\)          &                                                                                 &      IV*.1       &    Braid     &                                       \\ \hline
			(d)   &         \(> 0\)          &                &         \(=0\)          &         \(=0\)          &         \(=0\)          &                                                                                 &       II.4       &     Lop      &          \ref{lemma:CaseNDd}          \\ \hline
			(e)   &         \(> 0\)          &         \(= 0\)          &         \(=0\)          &         \(>0\)          &         \(=0\)          &                                                                                 &      III.2       &    Looop     &          \ref{lemma:CaseNDe}          \\ \hline
			(f.i)  & \multirow{8}{*}{\(> 0\)} & \multirow{8}{*}{\(= 0\)} & \multirow{8}{*}{\(=0\)} & \multirow{8}{*}{\(>0\)} & \multirow{8}{*}{\(>0\)} & \(2\nu(I) > \nu(I_3)+\nu(I_6) > 2\nu(I_3)\) or \(\nu(I_3) < \nu(I) < \nu(I_6)\) &       IV.1       &   Grl Pwr    & \multirow{8}{*}{\ref{lemma:CaseNDf}}  \\ \cline{1-1}\cline{7-9}
			(f.ii)  &                          &                          &                         &                         &                         & \(2\nu(I) > \nu(I_3)+\nu(I_6) > 2\nu(I_6)\) or \(\nu(I_3) > \nu(I) > \nu(I_6)\) &       IV.3       &     Cat      &                                       \\ \cline{1-1}\cline{7-9}
			(f.iii) &                          &                          &                         &                         &                         & \(2\nu(I) > \nu(I_3)+\nu(I_6) = 2\nu(I_3)\) or \(\nu(I_3) = \nu(I) = \nu(I_6)\) &       II.1       &    Candy     &                                       \\ \cline{1-1}\cline{7-9}
			(f.iv)  &                          &                          &                         &                         &                         & \(\nu(I) < \nu(I_3), \, \nu(I) < \nu(I_6)\)                                     &       IV.2       &    Garden    &                                       \\ \cline{1-1}\cline{7-9}
			(f.v)  &                          &                          &                         &                         &                         & \(\nu(I) = \nu(I_3) < \nu(I_6)\)                                                &      III.5       &     Tree     &                                       \\ \cline{1-1}\cline{7-9}
			(f.vi)  &                          &                          &                         &                         &                         & \(\nu(I) = \nu(I_6) < \nu(I_3)\)                                                &      III.6       &  Winky Cat   &                                       \\ \hline
			(g)   &         \(> 0\)          &         \(= 0\)          &         \(=0\)          &         \(=0\)          &         \(>0\)          &                                                                                 &      III.3       &     Loop     &          \ref{lemma:CaseNDg}          \\ \hline
			(h)   &         \(> 0\)          &         \(> 0\)          &         \(=0\)          &         \(=0\)          &         \(>0\)          &                                                                                 &       IV*.3       &    Looop     &          \ref{lemma:CaseNDh}          \\ \hline\hline\\
		\end{tabular}
	\end{adjustbox}
\end{table}

\begin{theorem}\label{THM:Main_DegConic}
	Let $Y$ be a plane quartic curve defined by 
	\[Ax^4 +By^4 + Cz^4 + ay^2z^2 + bx^2z^2 + cx^2y^2 = 0\]
	normalized as in Proposition~\ref{Prop:norm1}. Let $\Delta(Y)$ be the
	discriminant of the quartic $Y$, and let $\Delta(X)$ be
	the discriminant of the conic $X$ defined
	by Equation \eqref{eq: G}, which we assume to have positive valuation, that is, \(\nu(\Iiii'') > 0\).
	
	Then the valuation of \(\Delta(Y)\) is positive, $Y$ has geometric
	bad reduction and one of the cases in Table~\ref{tab:deg} occurs. 
\end{theorem}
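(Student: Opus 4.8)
The plan is to mirror the structure of the non-degenerate case (Theorem~\ref{THM:Main_NonDegConic}), adapting the argument to the situation $\nu(\Iiii'') = \nu(\Delta(X)) > 0$, which by Remark~\ref{rem:DeltaX} signals that the reduction of the conic $X$ is degenerate. First I would record that the positivity of $\nu(\Delta(Y))$ is automatic here: from Equation~\eqref{eq:Delta} we have $\Delta(Y) = -2^{-20}\Iiii\,\Iiii''^{4}\Ivi^{2}$, so $\nu(\Iiii'') > 0$ forces $\nu(\Delta(Y)) > 0$, and then Proposition~\ref{prop:potgoodredquartics} shows $Y$ fails to have potentially good reduction. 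Thus $Y$ has geometric bad reduction and must fall into one of the thirteen reduction types of Theorem~\ref{FromGraphtoRedType}; the content of the theorem is to pin down which decorated graph occurs in terms of the valuations of $\Iiii,\Iiii',\Iiii'',\Ivi$ and the auxiliary invariant $I$ from Equation~\eqref{eq:I}.

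The core of the proof is a case analysis driven by the method of Section~\ref{sec:compute}. Following the strategy illustrated in Example~\ref{exa:blowup}, I would compute the stably marked model $(\mathcal{X}, \mathcal{D})$ of the marked conic $(X, D)$ by determining which coordinates $\xi_t$ (for triples $t \in T$ of branch points from Equation~\eqref{eq:branchV4}) give rise to distinct irreducible components of $\Xb$ and how the six branch points specialize. Since $\nu(\Delta(X)) > 0$, the naive model $\mathcal{X}_0$ defined by Equation~\eqref{eq: G} has non-reduced or degenerate special fiber (cf.\ Remark~\ref{rem:blowup}.(a) and Proposition~\ref{prop:degeneratedconicproperties}), so the analysis must account for the extra blow-ups needed to reach a semistable model; this is precisely the point where the degenerate case diverges from Theorem~\ref{THM:Main_NonDegConic}. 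Once the decorated graph $(\Xb, \overline{D})$ is identified, Lemma~\ref{lem:adm} produces the admissible $V$-cover $\fb\colon\Yb\to\Xb$ uniquely, and the passage to the stable curve (by contracting genus-$0$ components meeting the rest in at most two points, as in the proof of Theorem~\ref{FromGraphtoRedType}) yields the reduction type. The translation of the geometric conditions on specializations into numerical inequalities among $\nu(\Iiii), \nu(\Iiii'), \nu(\Iiii''), \nu(\Ivi), \nu(I)$ is carried out case by case, exactly as in the worked example, and is deferred to the lemmas referenced in Table~\ref{tab:deg}.

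The argument is organized so that the defining inequalities for the cases partition the relevant region of valuation space, guaranteeing both exhaustiveness and mutual exclusivity; this is what makes the case-by-case verification in the individual lemmas suffice to prove the full classification. For each case one also extracts the $j$-invariant (respectively the Igusa invariants) of the positive-genus components by the cross-ratio computation of Example~\ref{exa:blowup}, or alternatively by writing down an explicit non-semistable model and normalizing, as described in Remark~\ref{rem:blowup}.(b). The main obstacle I anticipate is the bookkeeping forced by the degenerate conic: because $\mathcal{X}_0$ is not semistable, one must pass to a ramified extension of $K$ to find a reduced model, track how the branch points $P_i, P_i'$ redistribute among the resulting chain of $\PP^1$'s, and verify that the resulting valuations of $\alpha, \beta, \gamma$ (the roots of $p_a, p_b, p_c$) and of the $\Delta_i$ combine correctly into the stated conditions on the invariants. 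Managing this normalization uniformly across all subcases of Table~\ref{tab:deg}, while keeping the invariant-theoretic conditions symmetric, is the technically delicate part and is the reason the proof is distributed over the separate lemmas in Section~\ref{sec:main_proofs_deg}.
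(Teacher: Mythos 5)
Your outline reproduces the paper's strategy faithfully at the level of structure: the observation that $\nu(\Delta(Y))>0$ is automatic from $\Delta(Y)=-2^{-20}\Iiii\Iiii''^4\Ivi^2$, the reduction to identifying the decorated graph of the stably marked conic, and the passage from decorated graph to stable curve via Lemma~\ref{lem:adm} and the contraction argument of Theorem~\ref{FromGraphtoRedType} are all exactly what the paper does. But the proposal stops at the point where the theorem's actual content begins: every one of the verifications is ``deferred to the lemmas referenced in Table~\ref{tab:deg}'', which in a blind attempt means the case analysis is simply not carried out. Concretely, you do not establish the degenerate-case analogue of Lemma~\ref{lemma:specialization_of_points}, namely Proposition~\ref{prop:degeneratedconicproperties}, which is the tool that decides (i) when $\Xb_0$ is reducible versus non-reduced, (ii) when $P_i,P_i'$ land on different lines, and (iii) when branch points from different pairs share a component (via the valuation of $\alpha(2b-\beta)+(2a-\alpha)\beta-4Cc$). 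Without this, none of the specialization claims driving cases (a), (b), (d), (e) can be made, and the conditions in the table cannot be derived.

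The more serious omission is case (c), where all of $\nu(\Delta_a),\nu(\Delta_b),\nu(\Delta_c)$ are positive and $\Xb_0$ is a double line. Your plan — pass to a ramified extension, blow up to a reduced semistable model of the marked conic, and track where the $P_i$ go — would in principle apply, but it is not what the paper does and it gives no hint of how conditions such as $2\nu(\Ivi)=3\nu(\Iiii'')$ or $\nu(\Iiii'^2-16\Iiii\Iiii'')$ would arise. The paper instead completes the square to write $Y$ as $(\sqrt{A}x^2+\sqrt{B}y^2+\sqrt{C}z^2)^2+(\text{terms of positive valuation})=0$, exhibits an explicit hyperelliptic model $t^2=x_1^8+Mx_1^6+Nx_1^4+Mx_1^2+1$, computes its invariants as $(L_1:L_2:L_3)=(2\Iiii':16\Iiii\Iiii'':-4\Ivi\Iiii)$ in $\PP^2_{1,2,3}$, and imports the classification of Theorem~\ref{THM:hyperelliptic}. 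This change of mechanism is also why subcase (c.i) yields \emph{good} hyperelliptic reduction — which sits awkwardly with your blanket assertion (via Proposition~\ref{prop:potgoodredquartics}) that $\nu(\Iiii'')>0$ forces geometric bad reduction; that proposition characterizes potentially good \emph{quartic} reduction, and the distinction matters precisely in this case. A complete proof along your lines would need either to supply this hyperelliptic detour or to carry out the semistable-model computation for the non-reduced conic explicitly; as written, the proposal contains the right scaffolding but none of the load-bearing steps.
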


\begin{table}[!ht]
	\caption{Cases of Theorem~\ref{THM:Main_DegConic}.\label{tab:deg}}
	\begin{adjustbox}{max width = \textwidth}
		\begin{tabular}{cccccc>{\centering\arraybackslash}m{5.5cm}ccc}
			\hline\hline
			&       \(\nu(I_3)\)        &         \(\nu(I_3')\)          &      \(\nu(I_3'')\)      &       \(\nu(I_6)\)       &        \(\nu(I)\)        & Other conditions                                                                                                     & Decorated graph & Stable curve &                   Lemma                   \\ \hline\hline
			(a)   &          \(= 0\)          &                                &          \(>0\)          &          \(=0\)          &                          &                                                                                                                      &       II.2       &     DNA      &            \ref{lemma:CaseDa}             \\ \hline
			(b.i)  & \multicolumn{2}{c}{\multirow{3}{*}{}} & \multirow{3}{*}{\(>0\)}  & \multirow{3}{*}{\(>0\)}  & \multirow{3}{*}{\(=0\)}  & \(0 < \nu(I_3'') < \nu(I_6)\)                                                                                        &      IV*.2       &     Cat      &     \multirow{3}{*}{\ref{lem:caseb}}      \\ \cline{1-1}\cline{7-9}
			(b.ii)  &                           &                                &                          &                          &                          & \(\nu(I_3'') > \nu(I_6)>0\)                                                                                          &       IV.5       &    Grl Pwr     &                                           \\ \cline{1-1}\cline{7-9}
			(b.iii) &                           &                                &                          &                          &                          & \(\nu(I_3'') = \nu(I_6)>0\)                                                                                          &      III.4       &    Candy     &                                           \\ \hline
			(c.i)  & \multirow{12}{*}{\(= 0\)} &   \multirow{12}{*}{\(> 0\)}    & \multirow{12}{*}{\(>0\)} & \multirow{12}{*}{\(>0\)} & \multirow{12}{*}{\(>0\)} & \(2\nu(I_6) = 3\nu(I_3'') \leq 6\nu(I_3')\)                                                  &        I         &  Good (hyp)  & \multirow{12}{*}{\ref{lem:degeneratehyp}} \\ \cline{1-1}\cline{7-9}
			(c.ii)  &                           &                                &                          &                          &                          & \(2\nu(I_6) > 3\nu(I_3'')\) and \(2\nu(I_3')\geq \nu(I_3'') = \nu(I_3'^2 - 16I_3I_3'')\) &       II.3       &  Loop (hyp)  &                                           \\ \cline{1-1}\cline{7-9}
			(c.iii) &                           &                                &                          &                          &                          &  \(2\nu(I_6) > 3\nu(I_3'')\) and \(2\nu(I_3') =  \nu(I_3'') < \nu(I_3'^2 - 16I_3I_3'')\)   &      III.1       &  DNA (hyp)   &                                           \\ \cline{1-1}\cline{7-9}
			(c.iv)  &                           &                                &                          &                          &                          & \(2\nu(I_6) < 3\nu(I_3'')\) and \(3\nu(I_3')\geq \nu(I_6)\)                                                 &       II.2       &  DNA (hyp)   &                                           \\ \cline{1-1}\cline{7-9}
			(c.v)  &                           &                                &                          &                          &                          & \(3\nu(I_3') < \nu(I_3'I_3'') < \nu(I_6)\)                                                                      &      IV*.2       &  Cat (hyp)   &                                           \\ \cline{1-1}\cline{7-9}
			(c.vi)  &                           &                                &                          &                          &                          & \(3\nu(I_3')< \nu(I_6) < \nu(I_3'I_3'') \)                                                                      &       IV.5       & Grl Pwr (hyp)  &                                           \\ \cline{1-1}\cline{7-9}
			(c.vii) &                           &                                &                          &                          &                          & \(3\nu(I_3') < \nu(I_3'I_3'') = \nu(I_6)\)                                                                      &      III.4       & Candy (hyp)  &                                           \\ \hline
			(d)   &          \(> 0\)          &                                &          \(>0\)          &          \(=0\)          &          \(=0\)          &                                                                                                                      &      III.7       &     Cave     &            \ref{lemma:CaseDd}             \\ \hline
			(e)   &          \(> 0\)          &            \(= 0\)             &          \(>0\)          &                          &          \(>0\)          &                                                                                                                      &       IV.4       &    Braid     &            \ref{lemma:CaseDe}             \\ \hline\hline\\
		\end{tabular}
	\end{adjustbox}
\end{table}

A decisional tree reading the conditions of Tables \ref{tab:non-deg} and \ref{tab:deg} is given in \cite{Coppola}. Also a \SageMath~implementation is given there. It takes as input the coefficients $(A,B,C,a,b,c)$ of a smooth plane quartic in $\mathcal{M}_{3,V}$, not necessarily normalized, and outputs the reduction type computed with Theorems \ref{THM:Main_NonDegConic} and \ref{THM:Main_DegConic}.

In the following section we give a detailed proof of the two theorems, using the strategy explained in Section \ref{sec:compute}. In particular, for each case, we determine the special fiber $(\Xb,\overline{D})$ of the stably marked model of $(X,D)$, thus we obtain one of the twenty decorated graphs depicted in Appendix B and, as in the proof of Theorem \ref{FromGraphtoRedType}, we deduce the corresponding stable curve, which is the special fiber of the stable model of  $Y$ (Section~\ref{sec:stable}).

\begin{remark}
	The stable curve of type Candy corresponds to the decorated graphs II.1 (see Theorem \ref{THM:Main_NonDegConic}.(f.iii)) and III.4 (see Theorem \ref{THM:Main_DegConic}.(b.iii) and Theorem \ref{THM:Main_DegConic}.(c.vii)). Here, this can really be considered as two different reduction types, since the $j$-invariants of the elliptic curves $\Yb_1$ and $\Yb_2$  of the stable curve behave differently. 
	
	In the first case, it is shown in Lemma \ref{lemma:CaseNDf} that $j(\Yb_1)$ and $j(\Yb_2)$ depend on the value of the invariants and are in general not the same  in $k$.
	
	In the second case, $\Yb_1$ and $\Yb_2$ are isomorphic and we have that $j(\Yb_1) = j(\Yb_2) = 1728$. This is proved in Lemma \ref{lem:caseb} and Corollary \ref{cor:hyp_jinvariants}. 
	
	The difference between the two cases may be explained by considering the action of $V$ on the stable reduction $\Yb$ of $Y$. We refer to the proofs of the results for more details. 
\end{remark}

\begin{example}\label{exa:HLP}
	As an example we consider the curve
	\[
	Y:\; 2x^4 + 2y^4 + 15z^4 - 11y^2 z^2 - 11x^2z^2 + 3x^2y^2   = 0.
	\]
	Its automorphism group $\Aut_{\CC}(Y)$ is isomorphic to
	$D_4$. Additionally to the action of $V$, there is an automorphism
	$(x:y:z)\mapsto (y:-x, z)$ of order $4$. All its automorphisms are defined over $\QQ$. We have that
	\[
	\Delta(X)=I_3''=2^4, \quad \Delta(Y)=2^2\cdot 3\cdot 5\cdot 7^2.
	\]
	
	This curve has been studied by Howe--Lepr\'evost--Poonen
	\cite[Corollary~16]{HLP}. They show that the conductor of this curve is
	$N=2940=\Delta(Y)$, which is the smallest  value for a curve of genus $3$
	that we know of. The curve can also be found in Sutherland's database of non-hyperelliptic genus-$3$
	curves over $\QQ$ with small discriminant \cite{Sutherland}. In fact, it is the curve
	with smallest discriminant in this database.

	When we apply the results of this section to this curve for the primes
	$p=3,5,7$, we find that the reduction type of $Y$ is Lop for $p=3,5$ (case (d) of
	Theorem \ref{THM:Main_NonDegConic}) and Loop for $p=7$ (case (a) of
	Theorem \ref{THM:Main_NonDegConic}).
\end{example}

\section{Proofs of main results}\label{sec:main_proofs}

\subsection{Main result with non-degenerate conic}

\label{sec:main_proof_nondeg}
In this section we prove Theorem \ref{THM:Main_NonDegConic}, i.e.,~the
case that the reduction of the conic $X$ is non-degenerate. In
particular in this section we assume that $I_3''=\Delta(X)$ has
valuation zero. Equation \eqref{eq: G} defines a smooth model $\mathcal{X}_0$ of $X$ over
$\mathcal{O}$. Its special fiber, which we denote by $\Xb_0$, is an
irreducible component of the special fiber $\Xb$ of $\mathcal{X}$,
where $(\mathcal{X}, \mathcal{D})$ is the stably marked model of $(X,
D)$ from Proposition \ref{prop:stablymarked1}.(1). Recall that $D$
denotes the branch divisor of $f:Y\to X$. Moreover, the special fiber
$(\Xb, \Db)$ of the stably marked model of $(X, D)$ is a decorated
graph defined in Definition \ref{def:decorated_graph}. In the proof of
Theorem \ref{FromGraphtoRedType} we have seen that there are 20
possibilities for the decorated graph, which determine the 13
possibilities for the stable reduction of $Y$. The possibilities for
the decorated graph and the stable reduction of $Y$ are listed in
Appendix B. The strategy of the proofs is explained in
Example \ref{exa:blowup}.

We assume that $D$ splits
over $K$ and use the notation $P_a,\, P_a',\, P_b,\,  P_b',\, P_c,\, P_c'$ as
in Equation \eqref{eq:branchV4} for the $6$ branch points of $f$. 
The following lemma is useful in determining the cases for the decorated graph.

\begin{lemma}\label{lemma:specialization_of_points}
	If \(\nu(\Delta_a)\) is positive, then the points \(P_a,
	P_a'\) specialize to the same point
	in~\(\overline{X}_0\). 
	
	If \(\nu(A)\) is positive, then one
	point among \(P_b, P_b'\) and one point among \(P_c, P_c'\)
	specialize to the same point in \(\overline{X}_0\).
\end{lemma}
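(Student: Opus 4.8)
The plan is to reinterpret the six branch points as the intersections of the conic $X$ with the three coordinate lines, and then to track how these intersection divisors specialize on the model $\mathcal{X}_0$. Since we are working in the case $\nu(\Iiii'')=\nu(\Delta(X))=0$, Equation~\eqref{eq: G} defines a \emph{smooth} conic $\overline{X}_0$ over $k$ (the coefficients are integral by the normalization of Proposition~\ref{Prop:norm1}, so the reductions $\overline{A},\ldots,\overline{c}$ make sense), and in particular $\overline{X}_0$ contains no line. First I would record the elementary observation, obtained by substituting into Equation~\eqref{eq: G}, that $P_a,P_a'$ lie on the line $\{u=0\}$, that $P_b,P_b'$ lie on $\{v=0\}$, and that $P_c,P_c'$ lie on $\{w=0\}$; indeed the restriction of the conic to $\{u=0\}$ is the binary quadratic $Bv^2+avw+Cw^2$, whose two zeros are exactly $P_a$ and $P_a'$, and symmetrically for the other lines.

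The key input is then a specialization statement for these intersection divisors. For each coordinate line $H$, the scheme $\mathcal{X}_0\cap H$ is finite flat of degree $2$ over $\OO$, its special fiber being $0$-dimensional precisely because the smooth conic $\overline{X}_0$ contains no line. By the valuative criterion of properness each branch point extends to a section of $\mathcal{X}_0$, and a length count over the DVR shows that $\mathcal{X}_0\cap H$ is exactly the union of the two sections attached to the pair of branch points lying on $H$. Consequently the reduction divisor $\overline{X}_0\cap\overline{H}$ is supported precisely on the reductions of those two branch points. Justifying this coincidence of the flat hyperplane section with the union of the two sections is the main technical point; the remaining steps are short discriminant or incidence computations.

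For the first statement I would take $H=\{u=0\}$. The special fiber $\overline{X}_0\cap\{\overline{u}=0\}$ is cut out on the line $\{\overline{u}=0\}$ by $\overline{B}v^2+\overline{a}vw+\overline{C}w^2$, whose discriminant is $\overline{a}^2-4\overline{B}\,\overline{C}=\overline{\Delta_a}$. When $\nu(\Delta_a)>0$ this discriminant vanishes, and since the quadratic is nonzero (otherwise $\overline{X}_0$ would contain the line $\{\overline{u}=0\}$) it is a perfect square, so the degree-$2$ divisor is supported at a single point. By the previous paragraph this forces $\overline{P_a}=\overline{P_a'}$.

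For the second statement, assume $\nu(A)>0$. Evaluating Equation~\eqref{eq: G} at $(u:v:w)=(1:0:0)$ gives $\overline{A}=0$, so the point $(1:0:0)$ lies on $\overline{X}_0$; it is also the common point of the lines $\{\overline{v}=0\}$ and $\{\overline{w}=0\}$. Applying the specialization statement with $H=\{v=0\}$ shows that $(1:0:0)$, being in the support of $\overline{X}_0\cap\{\overline{v}=0\}$, must be the reduction of one of $P_b,P_b'$; applying it with $H=\{w=0\}$ shows likewise that $(1:0:0)$ is the reduction of one of $P_c,P_c'$. Hence one point among $P_b,P_b'$ and one point among $P_c,P_c'$ both specialize to $(1:0:0)$, which is the desired conclusion.
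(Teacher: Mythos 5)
Your proof is correct, but it takes a genuinely different route from the paper's. The paper argues directly from the explicit coordinates of the branch points in Equation~\eqref{eq:branchV4}: if $\nu(\Delta_a)>0$ the two roots of $p_a$ are congruent modulo $\pi$ and $\alpha^2\equiv 4BC\pmod{\pi}$, which makes $(0:\alpha:-2B)$ and $(0:-2C:\alpha)$ agree on the special fiber; if $\nu(A)>0$ one root each of $p_b$ and $p_c$ has positive valuation, forcing one of $P_b,P_b'$ and one of $P_c,P_c'$ to reduce to $(1:0:0)$. You instead identify each pair of branch points with the intersection divisor of $X$ with a coordinate line and let flatness of the hyperplane section of $\mathcal{X}_0$ carry that divisor to the special fiber, reducing everything to the discriminant of a binary quadratic over $k$ and an incidence check at $(1:0:0)$. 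Your version explains conceptually why $\Delta_a$ (respectively $A$) is the right quantity, at the cost of the flatness/length argument you rightly flag as the crux; it does go through, since the restricted form is a degree-$2$ form on the coordinate line that is nonzero modulo $\pi$, so its zero scheme is a relative effective Cartier divisor of degree $2$ over $\OO$ whose generic fibre is the pair of branch points. One caveat: you justify the nonvanishing of the restricted form modulo $\pi$ by the smoothness of $\Xb_0$, i.e.\ by the standing assumption $\nu(\Iiii'')=0$ of Section~\ref{sec:main_proof_nondeg}, whereas the paper's computation makes no such assumption and the lemma is in fact reused when the conic degenerates (e.g.\ in Lemma~\ref{lemma:CaseDa}). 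The repair is cheap: all you need is that no coordinate line divides the reduction of the left-hand side of Equation~\eqref{eq: G}, and that already follows from the normalization of Proposition~\ref{Prop:norm1} (each of the sets $\{a,B,C\}$, $\{A,b,C\}$, $\{A,B,c\}$ contains a unit), so your argument extends verbatim once the appeal to smoothness is replaced by this observation.
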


\begin{proof} Let $\pi$ be a uniformizing element of $K$.
	If \(\nu(\Delta_a)\) is positive, then the two roots
	of \(p_a(T)=T^2 - 2aT + 4BC=(T-a)^2-\Delta_a\) are congruent modulo \(\pi\),
	thus \(P_a, P_a'\), as
	in Equation \eqref{eq:branchV4}, specialize to the same point
	in \(\overline{X}_0\) because $\alpha^2\equiv a^2\equiv 4BC \pmod{\pi}$.
	
	Similarly, if \(\nu(A)\) is positive, one root of \(p_b(T)=T^2
	- 2bT + 4AC\) and one root of \(p_c(T)=T^2 - 2cT + 4AB\) have
	positive valuation, thus one point among \(P_b, P_b'\) and one
	point among \(P_c, P_c'\) specialize to \((1:0:0)\)
	in \(\overline{X}_0\).
   \hfill $\qed$ 
   \end{proof}

\begin{lemma}[Theorem~\ref{THM:Main_NonDegConic}, cases (a)--(c)]\label{lemma:Case0a-c} %Cases (a)-(c)
	Let $Y$ be as in Theorem~\ref{THM:Main_NonDegConic}, in
	particular $\nu(I_3'')=0$. Assume that \(\nu(I_3)=0\)
	and \(\nu(I_6) > 0\). Then one of the following occurs: 
	\begin{enumerate}[(a)] \item If \(\nu(I) =
		0\), then the decorated graph has type II.3 and the reduction
		type of the curve is Loop, with \(j\)-invariant \(j = 16
		(16I_3I_3'' + I)^3/(I_3I_3''I^2)\); \item if \(\nu(I_3') = 0\)
		and \(\nu(I) > 0\), then the decorated graph has type III.1
		and the reduction type of the curve is DNA; and \item
		if \(\nu(I_3') > 0\) and \(\nu(I) > 0\), then the decorated
		graph has type IV*.1 and the reduction type of the curve is
		Braid.  \end{enumerate}
\end{lemma}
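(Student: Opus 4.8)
The plan is to follow the strategy of Example~\ref{exa:blowup} and of the proof of Theorem~\ref{FromGraphtoRedType}: determine the special fiber $(\Xb,\Db)$ of the stably marked model of $(X,D)$, read off the decorated graph, and then describe the admissible cover. Since $\nu(I_3'')=0$, Equation~\eqref{eq: G} already defines a smooth model $\X_0$ of the conic, whose special fiber $\Xb_0$ is an irreducible component of $\Xb$; and $\nu(I_3)=\nu(ABC)=0$ forces $\nu(A)=\nu(B)=\nu(C)=0$, so all six coefficients are units. The first step is to translate the three cases into a count of how many of $\Delta_a,\Delta_b,\Delta_c$ are degenerate (have positive valuation). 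Expanding $I$ and using that $A,B,C$ are units, one checks directly that, given $\nu(I_6)=\nu(\Delta_a)+\nu(\Delta_b)+\nu(\Delta_c)>0$: exactly one $\Delta_i$ is degenerate iff $\nu(I)=0$ (case (a)); at least two are degenerate iff $\nu(I)>0$; and among the latter, exactly two are degenerate iff $\nu(I_3')=0$ (case (b)) while all three are degenerate iff $\nu(I_3')>0$ (case (c)). By the $S_3$-symmetry of the invariants I may assume the degenerate factors are $\Delta_a$, or $\{\Delta_b,\Delta_c\}$, or all three, respectively.

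Next, since $A,B,C$ are units the second clause of Lemma~\ref{lemma:specialization_of_points} never applies, so the only collisions among the branch points on $\Xb_0$ are those forced by its first clause: a degenerate $\Delta_i$ makes $P_i,P_i'$ specialize to a single point, and one checks the remaining specializations are pairwise distinct. Thus $\Xb_0$ carries respectively one, two, or three colliding pairs. To separate each colliding pair I blow up, introducing one tail per degeneration; checking condition~(3) of Definition~\ref{def:decorated_graph} on each component and labelling the nodes by Lemma~\ref{lem:graph} (each node gets label $0$, as $\sigma_i^2=e$) yields a chain $\Xb_0$--tail, a chain tail--$\Xb_0$--tail, and a star with central $\Xb_0$ meeting three tails. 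Comparing with Appendix~\ref{sec:adm} these are types II.3, III.1 and IV*.1.

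The admissible cover and the subsequent contraction are then read off exactly as in the worked case of Theorem~\ref{FromGraphtoRedType}. The branch data of $\fb$ restricted to $\Xb_0$ is $\sigma_b,\sigma_b,\sigma_c,\sigma_c$ in case (a), $\sigma_a,\sigma_a$ in case (b), and empty in case (c); a Riemann--Hurwitz count gives a single genus-$1$ component $\Yb_1$ over $\Xb_0$ in case (a) and only genus-$0$ components in cases (b) and (c), while over each tail there are two genus-$0$ components. Contracting the genus-$0$ components meeting the rest of $\Yb$ in at most two points produces the stated reduction types Loop, DNA and Braid.

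The main work is the $j$-invariant in case (a). Following Remark~\ref{rem:blowup}.(b), I would reduce the integral model~\eqref{eq: F} directly instead of passing through the stably marked model. Because $\bar\Delta_a=0$, the part $By^4+ay^2z^2+Cz^4$ reduces to a perfect square $\bar B(y^2+\mu z^2)^2$ with $\mu=\bar a/2\bar B$, so the reduced quartic $\bar Y$ has exactly two singular points, both on $x=0$ above $\overline{P_a}=\overline{P_a'}$; the hypothesis $\nu(I_3'')=0$ guarantees these are ordinary nodes and that there are no others, so $\bar Y$ is irreducible of geometric genus $1$ and, by Remark~\ref{rem:blowup}.(b), its normalization is the positive-genus component $\Yb_1$ of the stable reduction. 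I would then compute $j(\Yb_1)$ by presenting $\Yb_1$ as the double cover of $\Yb_1/\langle\sigma_b\rangle\cong\PP^1_k$ branched at the four points above $\overline{P_b},\overline{P_b'}$ and taking their cross-ratio; simplifying with the definitions of the invariants and the relation~\eqref{eq:I} gives $j=16(16I_3I_3''+I)^3/(I_3I_3''I^2)$. The point requiring care, and the main obstacle, is that $\Yb_1$ is \emph{not} the reduction $\overline{E_a}$ of $E_a$ but a curve $2$-isogenous to it, since the quotient map $\Yb_1\to \Yb_1/\langle\sigma_a\rangle=\overline{E_a}$ is unramified; hence the cross-ratio must be evaluated on $\Yb_1$ itself (equivalently on $\bar Y$), and computing it on the quotient $\overline{E_a}$ would return the wrong, $2$-isogenous value of $j$.
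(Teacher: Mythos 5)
Your proposal is correct and follows the same overall strategy as the paper's proof: both translate the hypotheses into a count of how many of $\Delta_a,\Delta_b,\Delta_c$ have positive valuation (with $\nu(A)=\nu(B)=\nu(C)=0$ forced by $\nu(I_3)=0$), use Lemma~\ref{lemma:specialization_of_points} to see that exactly the degenerate pairs $P_i,P_i'$ collide on $\Xb_0$ and that no cross-pair collisions occur, and conclude that the decorated graph is II.3, III.1 or IV*.1 with stable reduction Loop, DNA or Braid. The only genuine divergence is in the $j$-invariant of case (a): the paper also works with the direct reduction of the integral model, but extracts $j$ by algebraically massaging the reduced quartic into the form $y_2^2\equiv -(1+a_2x^2)(a_0+Ax^2)+a_1^2x^2/4$, i.e.\ a double cover of a line, whereas you present the normalization $\Yb_1$ as the degree-$2$ quotient $\Yb_1\to \Yb_1/\langle\sigma_b\rangle\simeq\PP^1_k$ and take the cross-ratio of the four branch points lying over $\overline{P_b},\overline{P_b'}$ (the method of Example~\ref{exa:blowup}). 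Both routes are valid and end in the same computer-assisted simplification into invariants that the paper itself acknowledges in Remark~\ref{rmk:difficult}. Your closing caveat is correct and worth keeping: since $\sigma_a$ swaps the two branches at each node of $\bar Y$, it acts freely on $\Yb_1$, so $\Yb_1\to\Yb_1/\langle\sigma_a\rangle$ is an unramified degree-$2$ isogeny and the reduction of $E_a$ only determines $j(\Yb_1)$ up to that isogeny; the paper avoids this issue by never passing through $E_a$, but anyone computing via the intermediate elliptic curves would need your warning.
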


\begin{proof}
	From the conditions on the invariants, it follows that the
	valuations \(\nu(A),\nu(B),$ $\nu(C)\) are zero, and in case (a)
	(respectively (b), (c)) we get that exactly one (respectively two, three) of
	the valuations \(\nu(\Delta_a),\nu(\Delta_b),\nu(\Delta_c)\) is
	positive.
	
	Lemma~\ref{lemma:specialization_of_points} implies that $P_i$ and
	$P_i'$ specialize to the same point of $\Xb_0$ if and only if the
	valuation of $\Delta_i$ is positive for $i\in \{a,b,c\}$. Moreover,
	since we have \(\nu(ABC)=0\),
	Lemma~\ref{lemma:specialization_of_points} implies that no two points
	from different pairs specialize to the same point of $\Xb_0$. Since
	the $6$ branch points specialize to at least $3$ pairwise distinct
	points of $\Xb_0$ it follows that $\Xb_0$ is an irreducible component
	of the special fiber \(\Xb\) of the stably marked model of \((X, D)\).
	From this it follows that we are in one of the cases II.3, III.1 or
	IV*.1.  We obtain that $\Xb$ is of type II.3 (respectively III.1 or
	IV*.1) if exactly one (respectively two or three) of the valuations
	$\nu(\Delta_a)$, $\nu(\Delta_b)$, $\nu(\Delta_c)$ are positive. As
	in the proof of Theorem \ref{FromGraphtoRedType} it follows that the
	stable reduction $\Yb$ of $Y$ is Loop (respectively DNA or Braid).

	To compute the $j$-invariant in case (a) assume that $\Delta_a$ is the
	discriminant with positive valuation. 
	Chose square roots $\sqrt{B}$
	and $\sqrt{C}$ with $\nu(a-2\sqrt{BC})>0$ in $\bar{K}$. Extend $K$ to contain them if needed and rename it as well as the corresponding notions $\mathcal{O}$, $\nu$ and $\pi$ for the ring of integers, valuation and uniformizer. We
	write $\mathcal{X}_0$ as
	\[
	A u^2+(b w+cv)u+G_2(v,w)=0,
	\]
	where $G_2(v,w)\equiv (\sqrt{B}v+\sqrt{C}w)^2\pmod{\pi}$. Here we use that
	$\nu(\Delta_a)$ is positive. 
	
	Write $\mathcal{Y}_0$ for the normalization of $\mathcal{X}_0$ in the
	function field of $Y$. Its special fiber $\Yb_0$ is  birationally given by
	\[
	\Yb_0:\; A x^4+(b z^2+cy^2)x^2+G_2(y^2,z^2)=0.
	\]
	There exists a change of coordinates $S\in \GL_3(K)$ such that the
	equation for $\mathcal{Y}_0$ with respect to the new variables
	$(x_1=x, y_1, z_1)$ still has integral coefficients and
	$G_2(y,z)\equiv y_1^2z_1^2\pmod{\pi}$.  Here we use the
	assumption  $\nu(ABC)=0$.
	Hence $\Yb_0$ may birationally be given by 
	\[
	\Yb_0:\; Ax_1^4+G_3(y_1,z_1)x_1^2+y_1^2z_1^2\equiv 0 \pmod{\pi}
	\]
	for some polynomial $G_3(y_1,
	z_1)=a_0z_1^2+a_1z_1y_1+a_2y_1^2\in \OO[y_1,z_1]$.  We set $z_1=1$,
	multiply the equation by $(1+a_2x^2)$, and define
	$y_2=((1+a_2x_1^2)y_1+a_1x_1^2/2)/x_1$. A short calculation shows that $\Yb_0$ is birationally given by
	\[
	\Yb_0:\; y_2^2\equiv -(1+a_2x^2)(a_0+Ax^2)+a_1^2/4x^2 \pmod{\pi}.
	\]
	This is an elliptic curve with  $j$-invariant
	\begin{equation}\label{eq:jinv}
	j \equiv \dfrac{16 (16I_3I_3'' + I)^3}{I_3I_3''I^2} \pmod{\pi}. 
	\end{equation}
	This expression is also valid if $\nu(\Delta_b)$ (resp.~$\nu(\Delta_c)$)
	is positive instead of $\nu(\Delta_a)$.
	
	We have already seen that the stable reduction $\Yb$ of $Y$ is Loop.
	It follows that the normalization of $\Yb$ is the normalization of
	$\Yb_0$. The statement on the $j$-invariant in the lemma follows.
   \hfill $\qed$ 
   \end{proof}

\begin{remark}\label{rmk:difficult} Notice that while checking the validity of Equation (\ref{eq:jinv}) is straightforward, the computation of the right-hand side from the left-hand side is not. The former was computed after some manipulations of the expression of the $j$-invariant computed with~\SageMath~taking into account the congruencies modulo $\pi$.
\end{remark}

\begin{lemma}[Theorem~\ref{THM:Main_NonDegConic}, case (d)]\label{lemma:CaseNDd} %case (d)
	Let $Y$ be as in Theorem~\ref{THM:Main_NonDegConic}, in
	particular $\nu(I_3'')=0$.
	Assume \(\nu(\Iiii)>0,
	\nu(\Ivi)=0\)
	and \(\nu(I)=0\). Then the decorated graph has type II.4 and
	the reduction type of the curve is Lop, and the Igusa
	invariants of the genus-2 curve are \begin{align*} J_2
	=& \Iiii'\Iiii'' - \Iiii''^2 + 2\Ivi + 24I ,\\
	J_4 =& \Iiii''^2\Ivi + 64\Iiii'\Iiii''I - 64\Iiii''^2I + 128\Ivi
	I + 768I^2 ,\\ 
	J_6 =& \Iiii''^2\Ivi I - 32\Iiii'\Iiii''I^2 +
	32\Iiii''^2I^2 - 64\Ivi I^2 - 256I^3 ,\\ 
	J_8
	=& \Iiii''^4\Ivi^2 + 256\Iiii'\Iiii''^3\Ivi I -
	256\Iiii''^4\Ivi I + 512\Iiii''^2\Ivi^2I + 4608\Iiii''^2\Ivi
	I^2\\ & - 32768\Iiii'\Iiii''I^3 + 32768\Iiii''^2I^3 -
	65536\Ivi I^3 - 196608I^4 ,\\ 
	J_{10}
	=& \Iiii''^4\Ivi^2I.  \end{align*}
\end{lemma}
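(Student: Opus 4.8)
The plan is to follow the strategy of Example~\ref{exa:blowup}: first pin down the valuations of the coefficients forced by the hypotheses, then determine the special fiber $(\Xb,\Db)$ of the stably marked model of $(X,D)$, read off the decorated graph and the resulting stable curve, and finally compute the Igusa invariants of the genus-$2$ component from an explicit model. For the first step I would argue as follows. From $\nu(I_6)=\nu(\Delta_a\Delta_b\Delta_c)=0$ one gets $\nu(\Delta_a)=\nu(\Delta_b)=\nu(\Delta_c)=0$, so each summand of $I=AB\Delta_a\Delta_b+AC\Delta_a\Delta_c+BC\Delta_b\Delta_c$ has valuation $\nu(AB),\nu(AC),\nu(BC)$ respectively. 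Hence $\nu(I)=0$ together with $\nu(I_3)=\nu(ABC)>0$ forces exactly one of $A,B,C$ to be a non-unit; by the $S_3$-symmetry of $I_3,I_3',I_3'',I_6,I$ under permuting the pairs $(A,a),(B,b),(C,c)$, I may assume $\nu(A)>0$ and $\nu(B)=\nu(C)=0$. Then $\Delta_b=b^2-4AC$ and $\Delta_c=c^2-4AB$ with $\nu(A)>0$ give $\nu(b)=\nu(c)=0$, while $\nu(\Delta_a)=0$ says $a^2\not\equiv 4BC \pmod{\pi}$.

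Next I would describe the special fiber. Since $\nu(I_3'')=\nu(\Delta(X))=0$, Equation~\eqref{eq: G} defines a smooth conic model $\mathcal{X}_0$, and its special fiber $\Xb_0\simeq\PP^1_k$ is obtained by setting $\overline{A}=0$. Using \eqref{eq:pa}--\eqref{eq:branchV4} and the valuations above: both roots of $p_a$ are units, so by Lemma~\ref{lemma:specialization_of_points} the points $P_a,P_a'$ specialize to two distinct points of $\{u=0\}$; for $p_b$ (and $p_c$) one root is a unit and the other has valuation $\nu(A)$, so exactly one of $P_b,P_b'$ and exactly one of $P_c,P_c'$ reduce to $(1:0:0)$, while the remaining $b$- and $c$-branch points reduce to two further pairwise distinct points. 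Thus two branch points (one of type $b$, one of type $c$) collide at $(1:0:0)$ and the other four are distinct, so $\mathcal{X}_0$ is not stably marked. Blowing up $(1:0:0)$ introduces a component $\Xb_1\simeq\PP^1$ carrying precisely these two points, making $\Xb$ a chain of two lines with the $(4,2)$-distribution of Theorem~\ref{FromGraphtoRedType}. Labelling the node by Lemma~\ref{lem:graph} (the node gets label $a$, the two points on $\Xb_1$ labels $b$ and $c$) identifies the decorated graph as type II.4 of Appendix~\ref{sec:adm}.

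I would then build the admissible cover $\fb\colon\Yb\to\Xb$ via Lemma~\ref{lem:adm} and apply Riemann--Hurwitz as in the proof of Theorem~\ref{FromGraphtoRedType}: over $\Xb_0$ the inertia generators at the five marked/singular points generate $V$, so the cover is connected of genus $2$ (the deficit is $5\cdot 2=10$, giving $\chi=4\cdot 2-10=-2$); over $\Xb_1$ the three inertia generators again generate $V$, giving a connected genus-$0$ component meeting $\Yb_0$ in the two points lying above the node (inertia $\sigma_a$). Contracting this genus-$0$ component glues its two attaching points to a single node, so the stable curve is a genus-$2$ curve with one node, i.e.\ of type Lop, consistent with the dual-graph Betti number $1$ and arithmetic genus $2+1=3$.

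For the final and hardest step, the Igusa invariants, I would use Remark~\ref{rem:blowup}.(b) and work directly with $\mathcal{Y}_0$, the normalization of $\mathcal{X}_0$ in $K(Y)$: its reduced special fiber is birationally the plane quartic obtained by setting $\overline{A}=0$ in Equation~\eqref{eq: F}, and solving for $x^2$ exhibits the genus-$2$ component as
\[
w^2=-(\overline{B}\,t^4+\overline{a}\,t^2+\overline{C})(\overline{c}\,t^2+\overline{b}),
\]
a smooth genus-$2$ curve precisely because $J_{10}=I_3''^4I_6^2I$ is a unit under our hypotheses. The remaining task is to compute the Igusa invariants $J_2,J_4,J_6,J_8,J_{10}$ of this binary sextic and verify that, as a weighted representative, they equal the stated closed forms in $I_3',I_3'',I_6,I$. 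This is the main obstacle: the identities are weighted-homogeneous polynomial relations in $\overline{B},\overline{C},\overline{a},\overline{b},\overline{c}$ that are straightforward to \emph{check} but, exactly as noted in Remark~\ref{rmk:difficult}, genuinely laborious to \emph{produce} in the symmetric invariant form; I would therefore carry out this computation and its simplification modulo $\pi$ with a computer algebra system.
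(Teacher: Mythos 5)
Your proposal is correct and follows essentially the same route as the paper's proof: deduce from the invariant conditions that all $\Delta_i$ are units and exactly one of $A,B,C$ is a non-unit, use the specialization of the branch points to identify the decorated graph as II.4 (hence reduction type Lop), and compute the Igusa invariants by computer algebra from the same birational genus-$2$ model $t^2=-(cy^2+bz^2)(By^4+ay^2z^2+Cz^4)$. Your write-up is somewhat more explicit than the paper's (the Riemann--Hurwitz bookkeeping for the admissible cover and the smoothness check via $J_{10}$), but there is no substantive difference in approach.
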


\begin{proof}
	From the conditions on the invariants, it follows that exactly one valuation among \(\nu(A),\nu(B),\nu(C)\) is positive and that all the valuations  \(\nu(\Delta_a),\nu(\Delta_b),\nu(\Delta_c)\) are zero.
	Assume  \(\nu(A) > 0\). Then, by Lemma~\ref{lemma:specialization_of_points}, a point with inertia generator \(\sigma_b\) and a point with inertia generator \(\sigma_c\)  both specialize to \((1:0:0)\), hence the decorated graph has type II.4 and the reduction of the curve is Lop.

	We determine an equation for the normalization of the stable reduction
	$\Yb$ of $Y$.  Since we are in case Lop this is a curve of genus $2$. Arguing as in the proof of Lemma \ref{lemma:Case0a-c}, we find that 
	$\Yb$ is birationally given by
	$$
	\Yb:\; x^2\equiv -\frac{By^4+ay^2z^2+Cz^4}{cy^2+bz^2}.
	$$
	Hence $t^2=(x(cy^2+bz^2))^2=-(cy^2+bz^2)(By^4+ay^2z^2+Cz^4)$ is the
	genus-$2$ curve we are looking for. We computed its Igusa invariants with \SageMath. As mentioned in Remark \ref{rmk:difficult}, the equalities in the statement of the lemma are straightforward to check, while stating them was not and required smart manipulations.
   \hfill $\qed$ 
   \end{proof}

\begin{lemma}[Theorem~\ref{THM:Main_NonDegConic}, case (e)]\label{lemma:CaseNDe} %case (e)
	Let $Y$ be as in Theorem~\ref{THM:Main_NonDegConic}, in
	particular $\nu(I_3'')=0$.
	Assume \(\nu(\Iiii)>0,\nu(\Iiii')=0, \nu(\Ivi)>0\)
	and \(\nu(I)=0\). Then the decorated graph has type III.2 and
	the reduction type of the curve is Looop.
\end{lemma}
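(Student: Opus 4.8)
The plan is to follow the blow-up strategy of Example~\ref{exa:blowup}, exactly as in the proofs of Lemmas~\ref{lemma:Case0a-c} and~\ref{lemma:CaseNDd}: first translate the hypotheses on $\nu(\Iiii),\nu(\Iiii'),\nu(\Iiii''),\nu(\Ivi),\nu(I)$ into valuations of the coefficients $A,B,C,a,b,c$ and of $\Delta_a,\Delta_b,\Delta_c$, then read off the specialization of the six branch points on $\Xb_0$ using Lemma~\ref{lemma:specialization_of_points}, and finally identify the decorated graph and the associated admissible cover.

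First I would pin down the valuations. Since $\nu(\Iiii)=\nu(ABC)>0$, at least one of $A,B,C$ has positive valuation, and the key input is $\nu(I)=0$, which I claim forces \emph{exactly one} of them to do so. Indeed, if two of $A,B,C$ had positive valuation, then each of the three monomials $AB\Delta_a\Delta_b,\ AC\Delta_a\Delta_c,\ BC\Delta_b\Delta_c$ constituting $I$ would involve two of $A,B,C$ and hence a factor of positive valuation (together with the $\Delta_i\in\OO$), contradicting $\nu(I)=0$. So exactly one vanishes, and by the symmetry of the invariants I may assume $\nu(A)>0$ and $\nu(B)=\nu(C)=0$. Then in $I$ the first two monomials have valuation $\geq\nu(A)>0$, so $\nu(I)=0$ forces $\nu(BC\Delta_b\Delta_c)=0$, i.e.\ $\nu(\Delta_b)=\nu(\Delta_c)=0$; and $\nu(\Ivi)=\nu(\Delta_a\Delta_b\Delta_c)>0$ then forces $\nu(\Delta_a)>0$. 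The normalization of Corollary~\ref{COR:Normalization_Invariants} gives $\nu(a)=\nu(b)=\nu(c)=0$, and one checks that the remaining hypothesis $\nu(\Iiii')=0$ is consistent with these valuations.

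With these valuations in hand, Lemma~\ref{lemma:specialization_of_points} reads off the picture on the smooth conic reduction $\Xb_0$ (which is smooth since $\nu(\Iiii'')=0$): because $\nu(\Delta_a)>0$ the pair $P_a,P_a'$ specializes to a single point $Q_a$, and because $\nu(A)>0$ one of $P_b,P_b'$ and one of $P_c,P_c'$ specialize together to $(1:0:0)=:Q_{bc}$, while the two remaining branch points specialize to two further distinct points. Thus the six marked points meet $\Xb_0$ in four points, two of which are double; blowing these up separates the colliding points and produces $\Xb$ as a chain $\Xb_a-\Xb_0-\Xb_{bc}$ of three projective lines, where $\Xb_a$ carries the two $\sigma_a$-points and $\Xb_{bc}$ carries one $\sigma_b$- and one $\sigma_c$-point. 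Labelling the two nodes by Lemma~\ref{lem:graph} (the node on $\Xb_a$ receives label $0$, the node on $\Xb_{bc}$ receives label $\sigma_a$) identifies $(\Xb,\overline D)$ as the decorated graph of type III.2.

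Finally I would describe the admissible cover as in the proof of Theorem~\ref{FromGraphtoRedType}. Riemann--Hurwitz shows that every component of $\Yb$ has genus $0$: above $\Xb_0$ and above $\Xb_{bc}$ the three branch points carry the three distinct involutions as inertia generators, giving one connected genus-$0$ component each, while above $\Xb_a$ the cover is branched only along $\sigma_a$ and splits into two genus-$0$ components. Counting the points above the two nodes, the dual graph of $\Yb$ has first Betti number $3$, so $\Yb$ has arithmetic genus $3$ with no component of positive genus; contracting the three genus-$0$ bridge components (each meeting the rest of $\Yb$ in two points) glues their endpoints into three nodes and yields the Looop type, a rational curve with three nodes. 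In particular there is no component of positive genus, so---unlike cases (a) and (d)---no $j$- or Igusa invariant needs to be recorded. The only real obstacle is the opening bookkeeping: making the valuation deductions airtight (especially the use of $\nu(I)=0$ to force a unique vanishing coefficient) and matching the resulting labelled chain precisely to the graph numbered III.2 in Appendix~\ref{sec:adm}; once the decorated graph is fixed, the admissible cover and the contraction are routine.
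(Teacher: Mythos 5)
Your proposal is correct and follows essentially the same route as the paper: you translate the hypotheses (via $\nu(I)=0$ and $\nu(I_6)>0$) into ``exactly one of $\nu(A),\nu(B),\nu(C)$ and exactly one of $\nu(\Delta_a),\nu(\Delta_b),\nu(\Delta_c)$ positive, with matching indices,'' apply Lemma~\ref{lemma:specialization_of_points} to read off the collisions on $\Xb_0$, and identify the decorated graph as III.2 with reduction type Looop. The extra detail you supply on the admissible cover and the contraction to the three-noded rational curve is exactly what the paper delegates to the proof of Theorem~\ref{FromGraphtoRedType}, and your attribution of $\nu(a)=\nu(b)=\nu(c)=0$ to the normalization is a harmless slip (it actually follows from $\nu(\Delta_b)=\nu(\Delta_c)=0$ and $\nu(\Delta_a)>0$ together with the valuations of $A,B,C$).
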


\begin{proof} From the conditions on the invariants, it follows that exactly one valuation among \(\nu(A),\nu(B),\nu(C)\) and exactly one among \(\nu(\Delta_a),\nu(\Delta_b),\nu(\Delta_c)\) are positive while only one among \(\nu(A\Delta_a), \nu(B\Delta_b), \nu(C\Delta_c)\) is also positive.
	
	Assume \(\nu(A\Delta_a)>0\). Then, by Lemma~\ref{lemma:specialization_of_points}, the points \(P_a\) and \(P_a'\) both specialize to \((0:1:-1)\), and a point with inertia generator \(\sigma_b\) and a point with inertia generator \(\sigma_c\)  both specialize to \((1:0:0)\), hence the decorated graph has type III.2 and the reduction of the curve is Looop. 
   \hfill $\qed$ 
   \end{proof}

\begin{lemma}[Theorem~\ref{THM:Main_NonDegConic}, case (f)] \label{lemma:CaseNDf}
	%case (f.i ) - (f.vi)
	Let $Y$ be as in Theorem~\ref{THM:Main_NonDegConic}, in
	particular
	$\nu(I_3'')=0$. Assume \(\nu(\Iiii)>0, \nu(\Iiii')=0, 
	\nu(\Ivi)>0\)
	and \(\nu(I)>0\). Then one of the following occurs:
	\begin{enumerate}[(i)] \item If \(2\nu(I)
		> \nu(I_3)+\nu(I_6) > 2\nu(I_3)\) or \(\nu(I_3)
		< \nu(I) < \nu(I_6)\), then the decorated graph has
		type IV.1 and the reduction type of the curve is Grl
		Pwr.
		\item If \(2\nu(I)
		> \nu(I_3)+\nu(I_6) > 2\nu(I_6)\) or \(\nu(I_3)
		> \nu(I) > \nu(I_6)\), then the decorated graph has
		type IV.3 and the reduction type of the curve is
		Cat. 
		\item If \(2\nu(I)
		> \nu(I_3)+\nu(I_6) = 2\nu(I_3)\) or \(\nu(I_3)
		= \nu(I) = \nu(I_6)\), then the decorated graph has
		type II.1 and the reduction type of the curve is
		Candy, and the $j$-invariants of the two genus-$1$
		components of the special fiber are the roots of the
		polynomial 
		\begin{align*}
		I_6^2I_3I_3't^2 -
		& 2^4\left(I_6^2I+3\cdot 2^4I_3I_3'I_6^2+ 3\cdot
		2^8I_3I_3'I_6I-
		2^{13}I_3^2I_3'^2I_6+2^{12}I_3I_3'I^2\right)t \\
		& + 2^8\left(I_6 + 2^4I + 2^8I_3I_3'\right)^3
		\end{align*}
		\item If \(\nu(I) < \nu(I_3)$ and $\nu(I) < \nu(I_6)\), then the decorated graph has type IV.2 and the reduction type of the curve is Garden.
		\item If \(\nu(I) = \nu(I_3) < \nu(I_6)\), then the decorated graph has type III.5, the reduction type of the curve is Tree, and the $j$-invariant of the genus-1 component of the special fiber is $j = 2^4(I+2^4I_3I_3')^3/(I^2I_3I_3')$.  
		\item If \(\nu(I) = \nu(I_6) < \nu(I_3)\), then the decorated graph has type III.6, the reduction type of the curve is Winky Cat, and the $j$-invariant of the genus-1 component of the special fiber is $j = 2^4(I_6+2^4I)^3/(I_6^2I)$.
	\end{enumerate}
\end{lemma}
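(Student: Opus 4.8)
The plan is to run the method of Example~\ref{exa:blowup} in full generality. Since $\nu(I_3'')=0$, Equation~\eqref{eq: G} already defines a smooth model $\mathcal X_0$ of $X$, so its special fiber $\Xb_0\cong\PP^1$ is one component of $\Xb$, and the whole content of the lemma is to describe the remaining components of $\Xb$, and hence the decorated graph and the stable curve, in each of the six regimes. The first step is to convert the hypotheses $\nu(I_3)>0$, $\nu(I_3')=0$, $\nu(I_6)>0$, $\nu(I)>0$ into valuation data for $A,B,C,a,b,c$. Using the normalization of Proposition~\ref{Prop:norm1} and the definitions of the $\Delta_i$, I would read off, up to the $S_3$-symmetry permuting the three axes $(A,a),(B,b),(C,c)$, the admissible valuation patterns; note that several symmetry-inequivalent patterns fall under case~(f), for instance two versus exactly one of $\nu(A),\nu(B),\nu(C)$ positive, so the bookkeeping has to be done for each. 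A crucial auxiliary identity is relation~\eqref{eq:I}, which here reads $4I=(I_3'-I_3'')^2-I_6-16I_3I_3''$: since $\nu(I_3'')=0$, it shows that $\nu(I)>0$ forces $I_3'\equiv I_3''\pmod\pi$, and it expresses $\nu(I)$ as $\min\bigl(2\nu(I_3'-I_3''),\nu(I_6),\nu(I_3)\bigr)$ whenever that minimum is attained only once. This is the device that lets me rewrite the intrinsic conditions of the statement, phrased in $\nu(I_3),\nu(I_6),\nu(I)$, as concrete comparisons of valuations of the coefficients.

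With the patterns fixed, I would use Lemma~\ref{lemma:specialization_of_points} to record how the six branch points of $f$ collide on $\Xb_0$: a pair $P_i,P_i'$ comes together precisely when $\nu(\Delta_i)>0$, while the vanishing of a coefficient among $A,B,C$ pushes points with distinct inertia to one of $(1:0:0),(0:1:0),(0:0:1)$. In case~(f) this always produces a cluster of at least three branch points at a single point of $\Xb_0$, so $\Xb_0$ fails to be stably marked and at least one further component of $\Xb$ appears over the cluster.

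The heart of the argument is then, exactly as in Example~\ref{exa:blowup}, to choose the coordinate $\xi=\xi_t$ attached to a suitable triple $t$ of branch points and to compute the reductions $\overline{\xi(P)}$ of the remaining branch points. The decorated graph is determined by whether these reductions land in $\{0,1,\infty\}$ or at a new point and, when a new point is created, by a secondary coordinate resolving the leftover collision. The three-way split of Example~\ref{exa:blowup} refines here into a six-way split governed by the mutual comparisons of $\nu(I_3),\nu(I_6)$ and $\nu(I)$, which by the identity above amount to comparisons among $\nu(I_3),\nu(I_6)$ and $2\nu(I_3'-I_3'')$. I would check that each of the mutually exclusive conditions (i)--(vi) singles out exactly one configuration among the graphs IV.1, IV.3, II.1, IV.2, III.5, III.6. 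Once the decorated graph is identified, Lemma~\ref{lem:adm} produces the admissible cover $\fb\colon\Yb\to\Xb$, and the genera of its components and the shape of the stable curve follow from Riemann--Hurwitz exactly as in the proof of Theorem~\ref{FromGraphtoRedType}.

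It remains to compute the invariants of the positive-genus components, namely the elliptic curve in the Tree~(III.5) and Winky Cat~(III.6) cases and the two elliptic curves in the Candy~(II.1) case. For each I would restrict $\fb$ to the relevant component $\Yb_j$, factor it through the intermediate $\PP^1=\Yb_j/\langle\sigma_i\rangle$ as in Figure~\ref{fig:ellipitc_curve}, and obtain $j(\Yb_j)$ as the cross-ratio of the four branch points of the degree-$2$ map; for Candy the two elliptic components enter the invariants only through their symmetric functions, which is why the two $j$-invariants appear as the roots of the displayed quadratic rather than individually. The main obstacle is this final step, flagged already in Remark~\ref{rmk:difficult}: the cross-ratios are rational functions of $A,B,C,a,b,c$, and rewriting them modulo $\pi$ as the clean expressions $2^4(I+2^4I_3I_3')^3/(I^2I_3I_3')$, $2^4(I_6+2^4I)^3/(I_6^2I)$ and the Candy quadratic requires repeated use of the algebraic relations among the invariants; this is a symbolic computation, carried out in \SageMath, that is easy to verify but hard to produce by hand. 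A secondary difficulty is ensuring that the branch-point bookkeeping stays consistent across the several valuation patterns that all realize case~(f).
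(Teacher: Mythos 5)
Your plan follows the paper's own proof essentially verbatim: the paper likewise translates the conditions on $\nu(I_3),\nu(I_6),\nu(I)$ into valuation comparisons among $\nu(A),\nu(B),\nu(\Delta_a),\nu(\Delta_b)$ (reducing by symmetry to $\nu(C\Delta_c)=0$, $\nu(A)>0$, $\nu(B\Delta_b)>0$), uses Lemma~\ref{lemma:specialization_of_points} together with explicit coordinates $\xi_t$ as in Example~\ref{exa:blowup} to pin down the decorated graph in each of (i)--(vi), and obtains the $j$-invariants by the same cross-ratio computation followed by a machine-assisted rewriting in terms of the invariants. The only imprecision is your opening claim that $\Xb_0$ is always a component of $\Xb$ --- in several subcases of (f) (for instance Candy and Grl Pwr) the six branch points fall into only two clusters on $\Xb_0$, so $\Xb_0$ is contracted in the stably marked model --- but since the $T/_\sim$ machinery you invoke reconstructs the components without presupposing this, it does not affect the argument.
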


\begin{proof}
	The conditions on the invariants imply that we may assume 
	\[\nu(C \Delta_c) = 0,\quad  \nu(A)>0, \textrm{ and }  \nu(B\Delta_b) > 0\]
	after permuting the variables, if necessary.
	In order to determine the stable reduction of $Y$ in the different subcases, we use two different coordinates for $X$. 
	The coordinates 
	\[
	\xi_1 = \dfrac{\beta u + \alpha v + 2Cw}{2(\alpha-a) v} \qquad \xi_2 = \dfrac{\beta u + \alpha v + 2Cw}{2(\beta - b)u},
	\]
	correspond to $t_1=( P_a', P_a, P_b')$ and $t_2=(P_b, P_b', P_a)$ in
	the notation of Section \ref{sec:compute}. The coordinates $\xi_1$ and
	$\xi_2$ define models $\mathcal{X}_1$ and $\mathcal{X}_2$ of $X$,
	which may or may not be isomorphic over $\mathcal{O}$.  We write
	$\Xb_1$ and $\Xb_2$ for the special fibers of the corresponding
	models. We use the same notation for further coordinates we introduce in the course of the proof. The coordinate $\xi_1$ is the same we considered in
	Example \ref{exa:blowup}, which corresponds to case (vi) of the
	current lemma.
	
	In this proof, we choose $\alpha$ to be a root of $p_a(T)=T^2-2aT+4BC$ of minimal valuation. Similarly, we choose  $\beta$ and $\gamma$ to be  a root of $p_b(T)=T^2-2bT+4AC$  and $p_c(T)=T^2-2cT+4AB$ of minimal valuation.
	\begin{enumerate}[(i)]
		\item If \(2\nu(I) > \nu(I_3)+\nu(I_6) > 2\nu(I_3)\) or 
		\(\nu(I_3) < \nu(I) < \nu(I_6)\), then $\nu(\Delta_a) > \nu(B)$ and 
		$\nu(\Delta_b)>\nu(A)$. Moreover, $\nu(A\Delta_a)=\nu(B\Delta_b)$.
		
		We may assume, without 
		loss of generality, that
		$\nu(B)\leq \nu(A)$.  It follows that
		$2\nu(\beta)=\nu(A)\geq \nu(B)=2\nu(\alpha)=2\nu(a)$ and
		$2\nu(\alpha-a)=\nu(\Delta_a)$.
		
		We define a new coordinate
		\[
		\xi_3= \dfrac{\beta \tilde{\pi}_3u + \alpha \pi_3 v + 2C\pi_3w}{2(\alpha-a) v},
		\]
		where $\pi_3$ and $\tilde{\pi}_3$ are chosen such that $\nu(-\beta
		B\tilde{\pi}_3+\alpha\gamma\pi_3)=\nu(\alpha -a).$ (This is obviously
		possible.)  One computes that the points \(P_a\) and \(P_a'\) both
		specialize to $\xi_3=0$ on $\Xb_3$, the points \(P_b\), \(P_b'\),
		and \(P_c\) specialize to $\xi_3=\infty$ on $\Xb_3$, and $\xi_3(P_c')$
		specializes to a point with $\xi_3\neq 0,\infty$ on $\Xb_3$.
		
		Similarly, we find a coordinate $\xi_4$ such that the
		points \(P_a\), \(P_a'\), and $P_c'$ specialize to $\xi_4=0$ on
		$\Xb_4$, the points \(P_b\) and \(P_b'\) specialize to $\xi_4=\infty$ on
		$\Xb_4$, and $\xi_3(P_c)$ specializes to a point with $\xi_4\neq
		0,\infty$ on $\Xb_4$. We conclude that the decorated graph is of type
		IV.1, and the irreducible components of $\Xb$ are
		$\Xb_1, \Xb_3, \Xb_4, \Xb_2$ from left to right. The reduction type of the curve is Grl Pwr.
		
		\item If \(2\nu(I) > \nu(I_3)+\nu(I_6) > 2\nu(I_6)\) or \(\nu(I_3) > \nu(I) > \nu(I_6)\), then $\nu(\Delta_a) < \nu(B)$ and $\nu(\Delta_b)<\nu(A)$. 
		We may assume that $\nu(\Delta_b)\leq\nu(\Delta_a)$. 
		
		One calculates that the points $P_a$ and $P_c'$ both specialize to the
		point $\xi_1=1$, the points $P_b, P_b'$, and $P_c$ specialize to the
		point $\xi_1=\infty$, and $P_a'$ specializes to $\xi_1=0$ on
		$\Xb_1$. Similarly, one computes that the points $P_a, P_a', P_c'$ specialize to the point $\xi_2=\infty$, the points $P_b'$ and $P_c$ to the point $\xi_2=1$, and $P_b$ to $\xi_2=0$ on $\Xb_2$.
		We conclude that the  decorated graph has type IV.3: the irreducible components $\Xb_1$ and $\Xb_2$ are the two middle components. The reduction type of the curve is Cat.
		
		\item If \(2\nu(I) \geq \nu(I_3)+\nu(I_6) =
		2\nu(I_3)\), then $\nu(\Delta_a) = \nu(B)$ and
		$\nu(\Delta_b) = \nu(A)$. We may assume, without 
		loss of generality, that $\nu(B) \leq \nu(A)$.

		One computes that $P_b, P_b'$, and $P_c$ all specialize to the point
		$\xi_1=\infty$ on $\Xb_1$. Moreover, the point $P_c'$ specializes to a
		point with $\xi_1\neq 0,1,\infty$ on $\Xb_1$. In particular, the
		points $P_a, P_a', P_c'$ and $P_b'$ specialize to pairwise distinct
		points of $\Xb_1$. Similarly, one checks that the points $P_b, P_b',
		P_c$, and $P_a$ specialize to pairwise distinct points on
		$\Xb_2$.  Hence $\Xb$ has type II.1 and the reduction type of the curve 
		is Candy.

		The stable reduction $\Yb$ of $Y$ consists of two genus-$1$ curves
		intersecting in two points. To calculate their $j$-invariants we
		proceed as in Example \ref{exa:blowup}. Let $\Yb_i$ be the
		irreducible component of $\Yb$ above $\Xb_i$ for $i=1,2$. The
		coordinate $\xi_1$ of $\Xb_1$ is identical to the coordinate from
		Example \ref{exa:blowup}, hence we find the same expression
		\[
		j(\Yb_1)\equiv \frac{2^6(a^2+12BC)^3}{\Delta_a^2\cdot
			4BC}\pmod{\pi}.  
		\] 
		A similar calculation yields 
		\[
		j(\Yb_2)\equiv \frac{2^6(b^2+12AC)^3}{\Delta_b^2\cdot
			4AC}\pmod{\pi}.  
		\] 
		
		One checks that  the
		$j$-invariants  $j(\Yb_1)$ and $j(\Yb_2)$  are the roots of the
		polynomial given in the statement.

		\item Assume \(\nu(I) < \nu(I_3)\) and \( \nu(I)
		< \nu(I_6)\). We may assume, without 
		loss of generality, that
		$\nu(\Delta_b)<\nu(A)$. It follows that
		$\nu(B)< \nu(\Delta_a)$.
		
		One computes that $P_a, P_a'$, and $P_c'$ specialize to the point
		$\xi_2=\infty$ and $P_b'$ and $P_c$ specialize to the point $\xi_2=1$
		on $\Xb_2$. By definition of $\xi_2$ the point $P_b$ specializes to
		$\xi_2=0$.

		Define
		\[
		\xi_3=\frac{\gamma(1-2B)u+4B^2v+2\alpha B w}{-2Bv}.
		\]
		Then the points $P_a$ and $P_a'$ specialize to $\xi_3=0$, the points
		$P_b, P_b'$, and $P_c$ specialize to $\xi_3=\infty$, and $P_c'$
		specializes to $\xi_3=1$ on $\Xb_3$.  We conclude that the decorated
		graph has type IV.2. The components are $\Xb_1$, $\Xb_3$, $\Xb_2$, and
		a fourth one to which we did not give a name. The reduction type of the
		curve is Garden.
		
		\item Assume \(\nu(I) = \nu(I_3) < \nu(I_6)\). We may assume, without 
		loss of generality, that $\nu(\Delta_b)
		= \nu(A)$. It follows that $\nu(B)< \nu(\Delta_a)$.
		
		\textbf{Case 1}: We first consider the case that $\nu(A)\leq \nu(B)$.
		
		Lemma \ref{lemma:specialization_of_points} implies that $P_a, P_a'$, and $P_c'$ specialize to the same point $(0:1:0)$ of $\Xb_0$. The points $P_b$ and $P_b'$ specialize to the same point of $\Xb_0$, as well, and that $P_a, P_c$, and $P_b$ specialize to pairwise distinct points of $\Xb_0$. 
		As in the previous cases, we may check that $\xi_1(P_c')\not\equiv  0, 1, \infty \pmod{\pi}$.   We conclude that the decorated graph has type III.5. The irreducible components are $\Xb_1$, $\Xb_0$, and $\Xb_2$ from left to right. 
		
		The reduction type of $\Yb$ is  Tree. The genus-$1$ component $\Yb_1$ of the stable reduction is the normalization of the component corresponding to coordinate $\xi_1$. Again, we are in the situation of Example \ref{exa:blowup} and get
		\[
		j(\Yb_1)\equiv \frac{2^6(a^2+12BC)^3}{\Delta_a^2\cdot 4BC}\pmod{\pi}.
		\]
		In terms of invariants this can be expressed as 
		\[
		j(\Yb_1)\equiv 2^4(I+2^4I_3I_3')^3/(I^2I_3I_3').
		\]
		
		\textbf{Case 2}: 
		If we are not in case 1 then  $\nu(A)>\nu(B)$. In this case $P_b, P_b'$, and $P_c$ (resp.~$P_a,P_a'$) specialize to the same point of $\Xb_0$, and $P_a, P_b, P_c'$ specialize to pairwise distinct points on $\Xb_0$. Moreover, $\xi_2(P_c')\not\equiv 0,1,\infty\pmod{\pi}$. 
		
		As in the previous case, the decorated graph has type III.5 and the reduction type of $\Yb$ is tree.   The component of genus $1$ is the unique irreducible component $\Yb_2$ above the component $\Xb_2$ corresponding to the coordinate $\xi_2$.  We get 
		\[
		j(\Yb_2) \equiv \frac{2^6(b^2+12AC)^3}{\Delta_b^2\cdot 4AC}\pmod{\pi}.
		\]
		In terms of invariants we get the same expression as in the above case, 
		\[
		j(\Yb_2)\equiv 2^4(I+2^4I_3I_3')^3/(I^2I_3I_3').
		\]

		\item Assume that \(\nu(I) = \nu(I_6) < \nu(I_3)\). We may assume, without 
		loss of generality, that 
		$\nu(\Delta_b) < \nu(A)$ and $\nu(B) = \nu(\Delta_a)$.
		
		One computes that the points $P_a, P_a', P_b'$, and $P_c'$ specialize
		to pairwise distinct points on $\Xb_1$. Moreover, the
		points \(P_b\), \(P_b'\) and \(P_c\) specialize to the same point of
		$\Xb_1$. 
		
		The points $P_a, P_a'$, and $P_c'$ specialize to the same point
		$\xi_1=\infty$, the points $P_b'$ and $P_c$ specialize to the point
		$\xi_2=1$, and $P_b$ specializes to the point $\xi_2=0$ on
		$\Xb_2$. This shows that $\Xb$ has type III.6: the irreducible components are $\Xb_1$, $\Xb_2$, and a further component from left to right. The reduction type of the curve $Y$ is Winky Cat. 
		
		As in Example \ref{exa:blowup} one computes that the $j$-invariant of the irreducible component $\Yb_1$ above $\Xb_1$ is 
		\[
		j(\Yb_1) \equiv \frac{2^6(a^2+12BC)^3}{\Delta_a^2\cdot 4BC}\pmod{\pi}.
		\]
		In terms of invariants, this is 
		\[
		j(\Yb_1)\equiv 2^4(I_6+2^4I)^3/(I_6^2I).
		\]
	\end{enumerate}
   \hfill $\qed$ 
   \end{proof}

\begin{lemma}[Theorem~\ref{THM:Main_NonDegConic}, case (g)]\label{lemma:CaseNDg} %case (g) Loop
	Let $Y$ be as in Theorem~\ref{THM:Main_NonDegConic}, in
	particular $\nu(I_3'')=0$.  Assume
	that \(\nu(\Iiii)>0,\nu(\Iiii')=0,\nu(\Ivi)=0\)
	and \(\nu(I)>0\). Then the decorated graph has type III.3 and
	the reduction type of the curve is Loop.  The \(j\)-invariant
	of the genus-1 component is 
	\[j = 16\dfrac{(I_3''^2 -
		16I_3'I_3'' + 16I_3'^2)^3}{I_3'I_3''^4(I_3' - I_3'')}.\]
\end{lemma}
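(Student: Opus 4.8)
The plan is to follow the strategy of Example~\ref{exa:blowup}: first convert the hypotheses on the invariants into valuation data for the coefficients $A,B,C,a,b,c$, then read off the decorated graph from the specialisation of the six branch points on $\Xb_0$, and finally compute the $j$-invariant of the unique component of positive genus from an explicit model obtained by normalisation.

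First I would pin down the valuations of the coefficients. From $\nu(I_6)=\nu(\Delta_a\Delta_b\Delta_c)=0$ all three $\Delta_i$ have valuation zero, and from $\nu(I_3)=\nu(ABC)>0$ at least one of $A,B,C$ has positive valuation. A short case distinction, using the normalisation of Proposition~\ref{Prop:norm1}, excludes the other distributions: if all of $A,B,C$ had positive valuation, normalisation forces $\nu(a)=\nu(b)=\nu(c)=0$ and hence $\nu(I_3')>0$, contradicting the hypothesis; if exactly one did, one computes $\nu(I)=0$, contradicting the hypothesis (this is instead case~(d)). Hence exactly two of $A,B,C$ have positive valuation, and after permuting the variables I may assume $\nu(A),\nu(B)>0$ and $\nu(C)=0$. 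The conditions $\nu(\Delta_i)=0$ then force $\nu(a)=\nu(b)=\nu(c)=0$, and reducing modulo $\pi$ gives $\bar\Delta_a=\bar a^2$, $\bar\Delta_b=\bar b^2$, $\bar\Delta_c=\bar c^2$, together with
\[
\bar I_3'=\bar C\bar c^2,\qquad \bar I_3''=\bar c(\bar C\bar c-\bar a\bar b),\qquad \bar I_3'-\bar I_3''=\bar a\bar b\bar c,
\]
where $\bar I_3''\neq 0$ because $\nu(I_3'')=0$.

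Next I would read off the decorated graph. By Lemma~\ref{lemma:specialization_of_points} applied to $\nu(A)>0$, a point with inertia $\sigma_b$ and a point with inertia $\sigma_c$ collide at $(1:0:0)$, and by the analogous statement (obtained by permuting the variables) for $\nu(B)>0$, a point with inertia $\sigma_a$ and the remaining $\sigma_c$-point collide at $(0:1:0)$. Evaluating Equation~\eqref{eq:branchV4} at the roots $\alpha,\beta,\gamma$ of valuation zero shows that the two remaining branch points, one with inertia $\sigma_a$ and one with inertia $\sigma_b$, specialise to two further points of $\Xb_0$ that are pairwise distinct and distinct from $(1:0:0),(0:1:0)$. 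Thus $\Xb_0$ is not stably marked, and blowing up the two collision points yields a chain of three projective lines; by Lemma~\ref{lem:graph} the two new nodes receive the labels $\sigma_a$ and $\sigma_b$ (the products $\sigma_b\sigma_c$ and $\sigma_a\sigma_c$ of the colliding inertia generators), so the graph is of type III.3. As in the proof of Theorem~\ref{FromGraphtoRedType}, the central component carries branch and singular points with two distinct inertia generators, hence a single genus-$1$ component $\Yb_1$ lies above it, while each tail carries a connected genus-$0$ cover meeting $\Yb_1$ in two points; contracting the two tails glues two pairs of points of $\Yb_1$ and produces a genus-$1$ curve with two loops, i.e.\ the reduction type Loop.

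Finally I would compute $j(\Yb_1)$ as in Remark~\ref{rem:blowup}.(b). Normalising $\mathcal{X}_0$ in $K(Y)=K(X)(x/z,y/z)$ and parametrising the smooth conic $\Xb_0$ rationally, the function field of $\Yb_1$ is generated over $k(\Xb_0)$ by $\sqrt{u/w}$ and $\sqrt{v/w}$; setting $X=\sqrt{v/w}$ one arrives at the model
\[
\Yb_1:\; s^2=-(\bar C+\bar a X^2)(\bar b+\bar c X^2),
\]
whose four branch points lie over $X^2=p_1:=-\bar C/\bar a$ and $X^2=p_2:=-\bar b/\bar c$; these are distinct and nonzero precisely because $\nu(I_3'')=0$, confirming $g(\Yb_1)=1$. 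Computing the $j$-invariant of $s^2=(X^2-p_1)(X^2-p_2)$ from the standard invariants of the associated binary quartic (equivalently, from the cross-ratio of the four branch points, as in Lemma~\ref{lemma:CaseNDf}) gives
\[
j(\Yb_1)=16\,\frac{(p_1^2+14p_1p_2+p_2^2)^3}{p_1p_2\,(p_1-p_2)^4}.
\]
Substituting $p_1=-\bar C/\bar a$ and $p_2=-\bar b/\bar c$ and clearing denominators, the identities
\[
\bar I_3''^2-16\,\bar I_3'\bar I_3''+16\,\bar I_3'^2=\bar c^2\bigl(\bar C^2\bar c^2+14\,\bar a\bar b\bar C\bar c+\bar a^2\bar b^2\bigr),\qquad \bar I_3'-\bar I_3''=\bar a\bar b\bar c
\]
turn this into the claimed expression. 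I expect this last identification to be the main obstacle: as noted in Remark~\ref{rmk:difficult}, recovering the compact invariant-theoretic form from the raw $j$-invariant is delicate, and is most safely carried out by verifying the equality in the reverse direction with a computer algebra system.
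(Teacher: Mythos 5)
Your proposal is correct and follows essentially the same route as the paper's proof: deduce from the invariants that exactly two of $A,B,C$ have positive valuation while all $\Delta_i$ and $a,b,c$ are units, apply Lemma~\ref{lemma:specialization_of_points} to get the type III.3 graph and reduction type Loop, and read off the genus-$1$ component from the reduction $\bar c x^2y^2+\bar a y^2z^2+\bar b x^2z^2+\bar C z^4=0$ of the quartic. Your model $s^2=-(\bar C+\bar aX^2)(\bar b+\bar cX^2)$ is just the other ruling of the same $(2,2)$-curve the paper uses, and your direct substitution $\bar I_3'=\bar C\bar c^2$, $\bar I_3''=\bar c(\bar C\bar c-\bar a\bar b)$, $\bar I_3'-\bar I_3''=\bar a\bar b\bar c$ verifies the invariant-theoretic form of $j$ exactly as claimed.
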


\begin{proof}
	From the conditions on the invariants, it follows that
	$\nu(\Delta_a) = \nu(\Delta_b) = \nu(\Delta_c) = 0$ and
	exactly two among $\nu(A), \nu(B), \nu(C)$ are positive. Now
	Lemma \ref{lemma:specialization_of_points} implies that branch
	points with the same inertia generator do not specialize to
	the same point on $\Xb_0$. Without loss of generality, we may
	assume that $A$ and $B$ have positive valuation and that
	$\nu(C)=0$. We conclude that the points $P_a, P_a', P_b, P_b'$
	specialize to pairwise distinct points of $\Xb_0$.
	
	Up to possibly interchanging $P_c$ and $P_c'$, it follows from
	Lemma \ref{lemma:specialization_of_points}  that $P_c$
	specializes to the same point as one of $\{P_b,P_b'\}$ on
	$\Xb_0$. The same argument using that $\nu(B)>0$ implies that
	$P_c'$ specializes to the same point as one of $\{P_a,P_a'\}$
	on $\Xb_0$.
	
	Hence the decorated graph has
	type III.3: the irreducible component $\Xb_0$ is the middle component of $\Xb$. The reduction type of the curve is Loop.
	
	Let $\Yb_0$ be the irreducible component above $\Xb_0$ of the stable
	reduction $\Yb$ of $Y$. To compute $j(\Yb_0)$ we argue as in the proof
	of Lemma \ref{lemma:Case0a-c}. After applying a suitable coordinate
	change in $\GL_3(K)$  on $Y$, we find a birational equation for $\Yb_0$:
	\[
	\Yb_0:\; (cx^2+az^2)y^2+bx^2z^2+Cz^4=0.
	\] 
	Setting $x=1$ we recognize $\Yb_0$ as an elliptic curve and find 
	$$j(\Yb_0) = \dfrac{16(a^2b^2 + 14abcC + c^2C^2)^3}{abcC(ab - cC)^4} \equiv \dfrac{16(I_3''^2 - 16I_3'I_3'' + 16I_3'^2)^3}{I_3'I_3''^4(I_3' - I_3'')} \pmod{\pi}.$$
   \hfill $\qed$ 
   \end{proof}

\begin{lemma}[Theorem~\ref{THM:Main_NonDegConic}, case (h)]\label{lemma:CaseNDh} %case (h) LOOOP
	Let $Y$ be as in Theorem~\ref{THM:Main_NonDegConic}, in
	particular $\nu(I_3'')=0$.
	Assume \(\nu(\Iiii)>0,\nu(\Iiii')>0, \nu(I) >0\),
	and \(\nu(\Ivi)=0\). Then the decorated graph has type IV*.3
	and the reduction type of the curve is Looop.
\end{lemma}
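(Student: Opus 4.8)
The plan is to translate the valuation hypotheses into exact conditions on $A,B,C,a,b,c$, then read off how the branch points collide on the reduction of the conic, and finally identify the decorated graph and the admissible cover exactly as in the proof of Theorem~\ref{FromGraphtoRedType}. Since $\nu(I_6)=\nu(\Delta_a\Delta_b\Delta_c)=0$, all three of $\Delta_a,\Delta_b,\Delta_c$ are units. I would first show that the hypotheses force $\nu(A),\nu(B),\nu(C)>0$ and $\nu(a)=\nu(b)=\nu(c)=0$. From $\nu(I_3)=\nu(ABC)>0$ at least one of $A,B,C$ has positive valuation; it cannot be exactly one, since if (say) $\nu(A)>0$ and $\nu(B)=\nu(C)=0$, then in $I=AB\Delta_a\Delta_b+AC\Delta_a\Delta_c+BC\Delta_b\Delta_c$ only the last summand is a unit, forcing $\nu(I)=0$ against $\nu(I)>0$; and it cannot be exactly two, since if $\nu(A),\nu(B)>0$ and $\nu(C)=0$, then $I_3'=A\Delta_a+B\Delta_b+C\Delta_c\equiv C\Delta_c$ is a unit, against $\nu(I_3')>0$. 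Hence all three are positive, and then $\nu(4BC)>0$ together with $\nu(\Delta_a)=\nu(a^2-4BC)=0$ (see Equation~\eqref{eq:Delta_a}) forces $\nu(a)=0$, and symmetrically $\nu(b)=\nu(c)=0$. As a consistency check, these valuations give $I_3''=\Delta(X)\equiv -abc$, a unit, matching $\nu(I_3'')=0$.

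\emph{Configuration of branch points.} With these valuations, Equation~\eqref{eq: G} reduces to the smooth conic $\Xb_0:\ \bar a\,vw+\bar b\,uw+\bar c\,uv=0$, which is therefore an irreducible component of the stably marked special fiber $\Xb$. Applying Lemma~\ref{lemma:specialization_of_points} to $\nu(A)>0$, and its symmetric analogues to $\nu(B),\nu(C)>0$, I would show that the six branch points of Equation~\eqref{eq:branchV4} collide in exactly three pairs at the three coordinate points of $\Xb_0$, each pair consisting of branch points with \emph{distinct} inertia generators: explicitly $\{P_b',P_c\}$ at $(1:0:0)$, $\{P_a,P_c'\}$ at $(0:1:0)$, and $\{P_a',P_b\}$ at $(0:0:1)$. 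In particular $\mathcal{X}_0$ is not stably marked.

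\emph{Decorated graph and stable curve.} Blowing up the three collision points, and verifying by the coordinate method of Example~\ref{exa:blowup} that the two branch points at each of them separate after a single blow-up, introduces three further projective lines, each meeting $\Xb_0$ in one point and carrying the two separated branch points, with no marked point left on $\Xb_0$; this is the star-shaped configuration of type IV*. Labelling the three nodes by Lemma~\ref{lem:graph}, each node receives the product of the two distinct generators on its leg, so the three nodes carry the three nontrivial labels $\sigma_a,\sigma_b,\sigma_c$. This mixed-generator labelling is precisely what pins the graph down as IV*.3, rather than IV*.1 (case (c) of Lemma~\ref{lemma:Case0a-c}), where the colliding pairs share a generator and the node labels are trivial. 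Since over every component the relevant inertia data involve all of $\sigma_a,\sigma_b,\sigma_c$ and hence generate $V$, each cover is connected of genus $0$, and $\Yb_0$ meets each outer line in two points; contracting the three unstable outer lines, as in the proof of Theorem~\ref{FromGraphtoRedType}, collapses $\Yb_0$ to a rational curve with three nodes, which is the Looop type.

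\emph{Main obstacle.} The delicate point is separating IV*.3 from its neighbours in Appendix~\ref{sec:adm}: one must confirm that the three colliding pairs genuinely mix inertia generators (this is what produces the nontrivial node labels, and hence Looop rather than the Braid of IV*.1) and that a single blow-up at each collision point already suffices, so that no additional components appear. Both of these rely on the explicit coordinates of Section~\ref{sec:compute}, and it is this bookkeeping, rather than any new idea, where the actual work lies.
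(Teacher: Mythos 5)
Your proposal is correct and follows essentially the same route as the paper: deduce from the hypotheses that $\nu(A),\nu(B),\nu(C)>0$ while $\Delta_a,\Delta_b,\Delta_c$ (and hence $a,b,c$) are units, apply Lemma~\ref{lemma:specialization_of_points} to see that the six branch points collide in three pairs with mixed inertia generators on the smooth conic $\Xb_0$, and conclude type IV*.3 and reduction Looop. The only difference is that you treat the ``one blow-up per collision suffices'' step as delicate, whereas it is automatic from the stability of the marked model (each collision point carries only two marked points, so no intermediate component can satisfy the three-special-points condition); the paper accordingly omits any coordinate computation here.
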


\begin{proof}
	From the conditions on the invariants, it follows that
	$\nu(\Delta_a) = \nu(\Delta_b) = \nu(\Delta_c) = 0$ and
	$\nu(A), \nu(B), \nu(C)$ are
	positive. Lemma \ref{lemma:specialization_of_points} implies
	that branch points with the same inertia generator do not
	specialize to the same point on $\Xb_0$. Moreover, for every
	pair~$i\neq j\in \{a,b,c\}$ one of the branch points with
	inertia generator $\sigma_i$ and one of the branch points with
	inertia generator $\sigma_j$ specialize to the same point of
	$\Xb_0$. We conclude that  the
	decorated graph has type IV*.3.
   \hfill $\qed$ 
   \end{proof}

\subsection{Main result with degenerate conic} \label{sec:main_proofs_deg}

As in Section \ref{sec:main_proof_nondeg} we write $\mathcal{X}_0$
for the model of $X$ defined by Equation \eqref{eq: G} and $\Xb_0$ for its
special fiber. Since we assume that the left-hand side of Equation \eqref{eq: G}  for $X$ is normalized as in Proposition \ref{Prop:norm1}, $\mathcal{X}_0$ is indeed a model, and $\Xb_0$ is a conic over the residue field $k$ of $K$. 
In this section we prove
Theorem \ref{THM:Main_DegConic}, which treats the case that $\Xb_0$ is
degenerate. Recall that this implies that $I_3''=\Delta(X)$ has
positive valuation. The
classification of degenerate conics in characteristic different from $2$ implies therefore that $\Xb_0$ is either reducible or non-reduced. In the first case, $\Xb_0$ consist of two irreducible components. In the second case the underlying reduced scheme 
$\Xb_0^{\emph{red}}$ is irreducible.

\begin{proposition}\label{prop:degeneratedconicproperties}
	Assume that \(\nu(I_3'')\) is positive. 
	\begin{enumerate}[(i)] \item The curve $\Xb_0$ is reducible and 
		the points \(P_a, P_a'\) specialize to different irreducible components of $\Xb_0$  if and only if \(\nu(\Delta_a)\) is
		zero.  
		\item The curve $\Xb_0$ is non-reduced and $\Xb_0^{\emph{red}}$ is irreducible  if and
		only if  \(\nu(\Delta_a)\), \(\nu(\Delta_b)\)
		and \(\nu(\Delta_c)\) are all positive.  
		\item Assume that $\Xb_0$ is reduced and $\nu(C)=0$. Let \(\alpha\)
		(respectively \(\beta\)) be a root of \(p_a(T)=T^2 - 2aT +
		4BC\) (respectively \(p_b(T)=T^2 - 2bT + 4AC\)). Then the two
		points \(P_a'=(0:-2C:\alpha)$ and $ P_b=(-2C:0:\beta)\)
		specialize to the same irreducible component
		of \(\overline{X}_0\) if and only if the valuation
		of \(\alpha(2b - \beta) + (2a - \alpha)\beta - 4Cc\) is
		positive.  \end{enumerate}
\end{proposition}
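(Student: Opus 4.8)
The plan is to read all three statements off the symmetric Gram matrix
\[
M=\begin{pmatrix} A & c/2 & b/2\\ c/2 & B & a/2\\ b/2 & a/2 & C\end{pmatrix}
\]
of the quadratic form $Q$ defining $X$ in Equation~\eqref{eq: G}. Since $\det M=-\tfrac14 I_3''$ and $1/2\in\OO$, the hypothesis $\nu(I_3'')>0$ says precisely that the reduced matrix $\bar M$ over $k$ is singular, so $\Xb_0$ is a degenerate conic of rank $\le 2$; the normalization of Corollary~\ref{COR:Normalization_Invariants} forbids $\bar M=0$, so the rank is $1$ or $2$. I will then use the standard dictionary for conics in characteristic $\ne 2$: rank $2$ means $\Xb_0$ is a union of two distinct lines $L_1\cup L_2$ (reduced, reducible, with a single node $S=L_1\cap L_2$), while rank $1$ means $\Xb_0$ is a double line (non-reduced, with irreducible reduction). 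The other basic input is that the restriction of $Q$ to the coordinate line $\{u=0\}$ is $Bv^2+avw+Cw^2$, whose discriminant is $\Delta_a$ and whose two zeros are exactly $\bar P_a,\bar P_a'$; likewise the lines $\{v=0\}$ and $\{w=0\}$ carry $\Delta_b$ and $\Delta_c$.

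For (i), the forward implication is the contrapositive of Lemma~\ref{lemma:specialization_of_points}: if $P_a,P_a'$ land on different components they have distinct reductions, forcing $\nu(\Delta_a)=0$. For the converse, $\nu(\Delta_a)=0$ means $\{u=0\}$ meets $\Xb_0$ in two distinct points. A double line meets any other line in a single doubled point, and meets its own reduced line in a restriction that vanishes identically, so two distinct transversal intersection points rule out rank $1$ and give reducibility. Since the two points are distinct, $\{u=0\}$ is neither a component of $\Xb_0$ nor passes through the node $S$; hence it crosses $L_1$ and $L_2$ in one point each, so $\bar P_a,\bar P_a'$ lie on different components.

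For (ii), if $\Xb_0$ is a double line then $\bar Q=\bar\ell^{\,2}$ restricts to a perfect square on every line, so all three discriminants $\overline{\Delta_a},\overline{\Delta_b},\overline{\Delta_c}$ vanish. Conversely, suppose all three vanish but, for contradiction, that the rank is $2$, so $\Xb_0=L_1\cup L_2$ with node $S$. Vanishing of the discriminant on each coordinate line---$\{u=0\}$, $\{v=0\}$, $\{w=0\}$---forces that line to pass through $S$ or to coincide with $L_1$ or $L_2$. The three coordinate lines form a triangle, so they cannot all pass through the single point $S$; a short case analysis on whether $S$ is a vertex of this triangle shows that the surviving possibilities all force $S$ to be a coordinate vertex that does not lie on the relevant coordinate line, which is absurd. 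Hence the rank is $1$. I expect this combinatorial elimination of the rank-$2$ case to be the main point requiring care.

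For (iii) I will use the polarity criterion: writing $\Phi$ for the symmetric bilinear form with Gram matrix $M$, two points $P_1,P_2$ on the conic lie on a common component iff the whole line $\overline{P_1P_2}$ is contained in $\Xb_0$, which happens iff $\bar\Phi(\bar P_1,\bar P_2)=0$, since then $\bar Q(sP_1+tP_2)=s^2\bar Q(P_1)+2st\,\bar\Phi(P_1,P_2)+t^2\bar Q(P_2)$ vanishes identically (as $\Xb_0$ has rank $2$ under the hypothesis that it is reduced). A direct computation gives
\[
\Phi(P_a',P_b)=C\bigl(\alpha\beta-a\beta-b\alpha+2cC\bigr)=-\tfrac{C}{2}\bigl(\alpha(2b-\beta)+(2a-\alpha)\beta-4Cc\bigr),
\]
and since $\nu(C)=\nu(2)=0$ the valuation of $\Phi(P_a',P_b)$ equals that of the displayed expression; thus $\bar\Phi(\bar P_a',\bar P_b)=0$ exactly when $\alpha(2b-\beta)+(2a-\alpha)\beta-4Cc$ has positive valuation, which is (iii). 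It remains only to note that $\bar P_a'\ne\bar P_b$ (immediate, as one has $u=0$ while the other has $u=-2C$, a unit) and to treat the boundary case where one reduction equals the node $S$: then it lies on both components and simultaneously $\bar\Phi(S,\cdot)=0$, so the two sides of the equivalence still agree.
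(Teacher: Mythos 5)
Your proof is correct, but it is organized rather differently from the paper's. You run everything through the Gram matrix of the quadratic form: the hypothesis $\nu(I_3'')>0$ becomes ``$\overline{M}$ has rank $1$ or $2$'' (rank $0$ being excluded by the normalization), part (i) becomes a statement about the restriction of $\overline{Q}$ to the coordinate line $\{u=0\}$ having distinct zeros, part (ii) becomes a synthetic incidence argument (the three coordinate lines form a triangle, so they cannot all pass through the node or be components), and part (iii) becomes the polarity criterion $\overline{\Phi}(\overline{P}_a',\overline{P}_b)=0$. The paper instead argues by explicit factorization: for (i) it checks when $u$ divides the reduced equation \eqref{eq: G}; for (ii) it chooses compatible square roots of $A,B,C$ so that the reduced form is a perfect square (the converse being only sketched as ``similar''); for (iii) it writes down the two candidate lines through $\{P_a',P_b\}$ and $\{P_a,P_b'\}$, multiplies them, and compares the $uv$-coefficient with \eqref{eq: G} --- which is literally the same condition as your polar form, so (iii) is the same computation in different clothing. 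What your route buys: a uniform treatment of all three parts, a transparent use of the hypothesis $\nu(I_3'')>0$ via the rank bound, a complete argument for the converse of (ii) where the paper waves, and an explicit treatment in (iii) of the boundary case where a branch point reduces to the node, which the paper does not address. Both proofs use Lemma~\ref{lemma:specialization_of_points} identically for the forward direction of (i). The one place to be slightly careful in your write-up is the claim that the two zeros of $\overline{Q}|_{u=0}$ are exactly $\overline{P}_a,\overline{P}_a'$: this needs the observation that $\nu(\Delta_a)=0$ forces the binary form $Bv^2+avw+Cw^2$ to have unit content, so its roots reduce to the (distinct) roots of the reduction --- but that is a one-line addition, not a gap.
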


\begin{proof}
	\begin{enumerate}[(i)]
		\item If \(\nu(\Delta_a)\) is positive, then the two roots of \(T^2 - 2aT + 4BC\) are congruent modulo \(\pi\), thus  \(P_a, P_a'\) specialize to the same point on \(\overline{X}_0\).

		Assume that $\nu(\Delta_a)=0$ and that
		\(P_a, P_a'\)
		specialize to  the same irreducible component of
		$\Xb_0$.  This also includes the case that $\Xb_0^{\text{red}}$ is reduced. We denote the irreducible component of $\Xb_0 ^{\text{red}}$ to which $P_a,P_a'$ specialize by $\Xb_1$. It follows
		from Equation \eqref{eq: G} that $u=0$ is an equation for $\Xb_1$.  But \(u\) is a factor of the left-hand side of Equation \eqref{eq: G} $\pmod{\pi}$ if and only if the valuations of \(B, C,\)
		and \(a\) are positive, which contradicts
		the assumption $\nu(\Delta_a)=0$.  Statement (i) follows.
		
		\item Assume that $\Xb_0$ is non-reduced. Then
		$\Xb_0^{\text{red}}$ is irreducible and the
		left-hand side of Equation~\eqref{eq: G} modulo $\pi$ is a
		square. We conclude that we may choose square roots of $A, B, C\pmod{\pi}$ such that 
		\[
		2\sqrt{AB}\equiv c, \quad 2\sqrt{AC}\equiv b, \qquad 2\sqrt{BC}\equiv a.
		\]  
		This implies that that valuations of \(\Delta_a\), \(\Delta_b\)
		and \(\Delta_c\) are positive. The converse is
		similar. Statement (ii) follows.
		
		\item Assume that $\Xb_0$ is reduced and
		$\nu(C)=0$.   We write $\Xb_1$ and $\Xb_2$ for the
		irreducible components of $\Xb_0$.
		
		Note that if $P_a'$ and $P_b$  specialize to the same irreducible component $\Xb_1$ of $\Xb_0$, then $\Xb_1$  is defined   by 
		\[
		\Xb_1:\; \beta u
		+ \alpha v
		+ 2C w = 0.
		\]

		We write \(\alpha'=2a-\alpha\) (resp.~\(\beta'=2b-\beta\)) for the
		second root of $p_a$ (resp.~$p_b$). Statement (i) implies that the
		points $P_a=(0:\alpha:-2B)=(0:-2C:\alpha')$ and
		$P_b'=(\beta:0:-2A)=(-2C:0:\beta')$ specialize to $\Xb_2$, which may
		be given by
		\[
		\Xb_2:\;  \beta' u
		+ \alpha' v + 2C w = 0.
		\]
		Computing the product of the equations for $\Xb_1$ and $\Xb_2$ we obtain 
		
		\begin{gather*} (\beta u
		+ \alpha v + 2C w)(\beta' u + \alpha' v + 2C w) =\\
		4CA u^2 + 4BCv^2 +
		4C^2w^2 + 4Cavw + 4Cbuw + (\alpha\beta' + \alpha'\beta)uv.\end{gather*} 
		The right-hand side of this equation is congruent to the left-hand side of Equation \eqref{eq: G}
		if and only if \[\nu(\alpha(2b
		- \beta) + (2a - \alpha)\beta - 4Cc) >0.
		\] \end{enumerate}
   \hfill $\qed$ 
   \end{proof}

\begin{lemma}[Theorem~\ref{THM:Main_DegConic}, case (a)] \label{lemma:CaseDa}%case (a) DNA
	Let $Y$ be as in Theorem \ref{THM:Main_DegConic}, in particular
	we have $\nu(I_3'')>0$.  Assume that \(\nu(\Iiii)=\nu(\Ivi)=0\). Then the
	decorated graph has type II.2 and the reduction of the curve is DNA.
\end{lemma}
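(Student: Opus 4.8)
The plan is to run the same argument as in the non-degenerate cases (e.g.\ Lemma~\ref{lemma:CaseNDd}), but replacing Lemma~\ref{lemma:specialization_of_points} by Proposition~\ref{prop:degeneratedconicproperties}, since here the reduction $\Xb_0$ of the conic $X$ is degenerate. First I would translate the hypotheses on the invariants into conditions on the coefficients: from $\nu(I_3)=\nu(ABC)=0$ one gets $\nu(A)=\nu(B)=\nu(C)=0$, and from $\nu(I_6)=\nu(\Delta_a\Delta_b\Delta_c)=0$ one gets $\nu(\Delta_a)=\nu(\Delta_b)=\nu(\Delta_c)=0$.

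Next I would pin down the shape of $\Xb_0$. Since $\nu(I_3'')=\nu(\Delta(X))>0$, the conic $\Xb_0$ is degenerate, and because the three valuations $\nu(\Delta_a),\nu(\Delta_b),\nu(\Delta_c)$ are not all positive, Proposition~\ref{prop:degeneratedconicproperties}.(ii) excludes the non-reduced case. Hence $\Xb_0$ is reduced, i.e.\ a union $\Xb_1\cup\Xb_2$ of two distinct lines meeting in a single node. Applying Proposition~\ref{prop:degeneratedconicproperties}.(i) to each of the three inertia generators, and using that all $\nu(\Delta_i)=0$, each pair $P_i,P_i'$ specializes to the two different components. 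Thus each of $\Xb_1$ and $\Xb_2$ receives exactly one branch point from each pair, that is, three branch points carrying the three distinct inertia generators $\sigma_a,\sigma_b,\sigma_c$.

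I would then check that $\Xb_0$ is already the special fiber of the stably marked model: each component carries three marked points together with the node, so condition~(3) of Definition~\ref{def:decorated_graph} holds and no blow-up is needed, giving $\Xb=\Xb_0$. Since on each component the three inertia generators are $\sigma_a,\sigma_b,\sigma_c$, whose product is trivial in $V$, Lemma~\ref{lem:graph} forces the node to carry the label $0$; comparison with Appendix~\ref{sec:adm} identifies the decorated graph as type II.2. Finally, as in the proof of Theorem~\ref{FromGraphtoRedType}, the restriction of $\fb$ over each line is a connected $V$-cover of $\PP^1$ branched at three points with the three distinct order-$2$ inertia subgroups, so by Riemann--Hurwitz each line lies below a single rational component of $\Yb$; since $\fb$ is unramified over the node, these two genus-$0$ components meet in the four preimages of the node, the arithmetic genus is $0+0+4-2+1=3$, nothing gets contracted, and the stable reduction is of type DNA.

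The step I expect to be the main obstacle is verifying that the three branch points on each line are genuinely pairwise distinct, and distinct from the node, so that $\Xb_0$ really is stably marked and no hidden coincidence forces an extra component pushing us out of type II. This should follow from $\nu(I_3)=0$ together with Proposition~\ref{prop:degeneratedconicproperties}, collisions being detected only by the further vanishing of invariants that characterizes the neighbouring cases (b) and (c) of Table~\ref{tab:deg}; but it needs to be argued rather than taken for granted. A secondary, purely bookkeeping point is to invoke part~(iii) of Proposition~\ref{prop:degeneratedconicproperties} to match the two linear factors of the reducible conic with the correct inertia data when confirming the labelling.
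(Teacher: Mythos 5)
Your argument is correct and follows essentially the same route as the paper: translate the hypotheses into $\nu(A)=\nu(B)=\nu(C)=0$ and $\nu(\Delta_a)=\nu(\Delta_b)=\nu(\Delta_c)=0$, then apply Proposition~\ref{prop:degeneratedconicproperties} to conclude that $\Xb_0$ is reducible with each pair $P_i,P_i'$ split between the two lines, which forces type II.2 and reduction type DNA. The distinctness issue you flag at the end is exactly what the paper settles by Lemma~\ref{lemma:specialization_of_points}: since $\nu(A),\nu(B),\nu(C)$ and all $\nu(\Delta_i)$ vanish, the six branch points specialize to six pairwise distinct smooth points of $\Xb_0$, so it is already stably marked.
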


\begin{proof}
	From the conditions on the invariants, it follows that the
	valuations of \(A,B,C,a,b,c\) are zero. Then, by Lemma~\ref{lemma:specialization_of_points},
	all $6$ branch points specialize to pairwise distinct points
	of $\Xb_0$. Proposition~\ref{prop:degeneratedconicproperties}
	implies that $\Xb_0$ is reducible, and the branch points with the same
	inertia generator specialize to different irreducible components of
	~$\Xb_0$. Therefore the decorated graph has type II.2, thus the
	reduction of the curve is~DNA.
   \hfill $\qed$ 
   \end{proof}

\begin{lemma}[Theorem~\ref{THM:Main_DegConic}, case (b)]\label{lem:caseb} %case (b)
	Let $Y$ be as in Theorem \ref{THM:Main_DegConic}, in particular $\nu(I_3'')>0$. Assume that 
	\(\nu(\Ivi)>0\) and $\nu(I)=0$. Then one of the following occurs: 
	\begin{enumerate}[(i)]
		\item if \(\nu(\Ivi) > \nu(\Iiii'')\), the decorated graph has type IV*.2 and the reduction of the curve is Cat;
		\item if \(\nu(\Ivi) < \nu(\Iiii'')\), the decorated graph has type IV.5 and the reduction of the curve is Grl Pwr; and
		\item otherwise, the decorated graph has type III.4, the reduction type of the curve is Candy, and the $j$-invariants of the genus-$1$ components of the special fiber are $j_1 = j_2 = 1728$.
	\end{enumerate}
\end{lemma}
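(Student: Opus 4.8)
The plan is to determine the stably marked model $(\mathcal{X},\mathcal{D})$ of $(X,D)$ by resolving the node of the degenerate conic $\Xb_0$ and tracking where the two branch points $P_a,P_a'$ specialize; the three subcases are then separated by comparing the two scales $\nu(I_6)$ and $\nu(I_3'')$. First I would pin down the valuations of the parameters. Since $\nu(I)=0$, after permuting the variables I may assume that the summand $BC\Delta_b\Delta_c$ of $I$ has valuation $0$, so $\nu(B)=\nu(C)=\nu(\Delta_b)=\nu(\Delta_c)=0$, hence $\nu(a)=0$ and $\nu(I_6)=\nu(\Delta_a)>0$. Reducing the relation \eqref{eq:I} and using that $I_6$, $16I_3I_3''$, $2I_3'I_3''$ and $I_3''^2$ all have positive valuation while $\nu(4I)=0$ forces $\nu(I_3'^2)=0$, i.e.\ $\nu(I_3')=0$. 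Thus exactly one of $\Delta_a,\Delta_b,\Delta_c$ has positive valuation, and by Proposition~\ref{prop:degeneratedconicproperties} the conic $\Xb_0$ is reduced and reducible, say $\Xb_0=\Xb_1\cup\Xb_2$; moreover $P_a,P_a'$ specialize to the node $\tau=\Xb_1\cap\Xb_2$, while each of the pairs $\{P_b,P_b'\}$ and $\{P_c,P_c'\}$ is split between $\Xb_1$ and $\Xb_2$ by Lemma~\ref{lemma:specialization_of_points}.

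Because two marked points collide at the singular point $\tau$, the model $\mathcal{X}_0$ is not stably marked and the node must be resolved; analysing this resolution is the second step. Writing $\ell_1,\ell_2$ for the two linear forms cutting out $\Xb_1,\Xb_2$ as in the proof of Proposition~\ref{prop:degeneratedconicproperties}.(iii), one checks that $P_a$ lies on $\{\ell_2=0\}$ with $\nu(\ell_1(P_a))=\tfrac12\nu(\Delta_a)$ and, symmetrically, $P_a'$ lies on $\{\ell_1=0\}$ with $\nu(\ell_2(P_a'))=\tfrac12\nu(\Delta_a)$; so $P_a$ (resp.~$P_a'$) approaches $\tau$ along $\Xb_2$ (resp.~$\Xb_1$) at depth $\tfrac12\nu(I_6)$. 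On the other hand the node is an $A_{n-1}$-degeneration with $n=\nu(\Delta(X))=\nu(I_3'')$, so the resolution inserts a chain of exceptional lines of combinatorial length $\nu(I_3'')$ between $\Xb_1$ and $\Xb_2$. Equivalently, and this is how I would actually carry out the verification, I would introduce the explicit coordinates $\xi$ of $X$ attached to triples of branch points as in Example~\ref{exa:blowup} and compute the reductions $\overline{\xi(P_a)},\overline{\xi(P_a')}$ together with the images $\tau_1,\tau_2$ of the two nodes.

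With these positions in hand, the third step is the case distinction, which amounts to locating $P_a$ and $P_a'$ on the exceptional chain. Measuring from the $\Xb_1$-end, $P_a'$ sits at position $\tfrac12\nu(I_6)$ and $P_a$ at position $\nu(I_3'')-\tfrac12\nu(I_6)$. Hence if $\nu(I_6)=\nu(I_3'')$ the two points land on one and the same middle line; contracting the remaining genus-$0$ lines gives the chain $\Xb_1-\Xb_3-\Xb_2$, the decorated graph III.4, and the stable curve Candy. If $\nu(I_6)<\nu(I_3'')$ they land on two distinct middle lines, giving a chain of four components, the decorated graph IV.5 and the type Braid. If $\nu(I_6)>\nu(I_3'')$ the two positions cross over, reflecting that $P_a,P_a'$ remain together and leave the axis joining $\Xb_1$ and $\Xb_2$ on a separate transverse leg $\Xb_4$ attached to a central line $\Xb_3$; this produces the star-shaped graph IV*.2 and the type DNA. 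In each case the admissible cover $\Yb\to\Xb$ and its image in $\Mb_{3,V}$ are read off exactly as in the proof of Theorem~\ref{FromGraphtoRedType}.

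Finally, in the Candy case (iii) I would compute the $j$-invariants. Over the middle line $\Xb_3$ all four special points $P_a,P_a',\tau_1,\tau_2$ carry inertia $\sigma_a$, so $\langle\sigma_a\rangle$ gives a double cover branched at these four points while $V/\langle\sigma_a\rangle$ acts freely; hence there are two genus-$1$ components $\Yb_1,\Yb_2$ over $\Xb_3$, exchanged by $V/\langle\sigma_a\rangle$ and therefore isomorphic. Taking the cross-ratio of $\{P_a,P_a',\tau_1,\tau_2\}$ on $\Xb_3$ as in Example~\ref{exa:blowup}, the balanced condition $\nu(I_6)=\nu(I_3'')$ makes this cross-ratio harmonic, so $j(\Yb_1)=j(\Yb_2)=1728$ (this is also recovered through Corollary~\ref{cor:hyp_jinvariants}). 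The main obstacle I expect is the explicit resolution in the second step: getting the exceptional chain and the factors of $\tfrac12$ exactly right so that the thresholds fall precisely at $\nu(I_6)=\nu(I_3'')$, and confirming the harmonic cross-ratio in the balanced case.
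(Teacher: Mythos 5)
Your treatment of the graph classification follows the same route as the paper (track the specializations of the six branch points and resolve the node of the degenerate conic), but the quantitative step is not correct as stated. The valuations $\nu(\ell_1(P_a))=\nu(\ell_2(P_a'))=\tfrac12\nu(\Delta_a)$ are right, but $\ell_1,\ell_2$ are not local coordinates on the model at the node: on the conic one has $\ell_1\ell_2=\epsilon_0\,u$ with $\nu(\epsilon_0)=\delta:=\nu(4Cc-\alpha(2b-\beta)-(2a-\alpha)\beta)$, and after completing the square the correct coordinates are $\ell_1+\eta$, $\ell_2-\eta$ with $\nu(\eta)=\delta$. The positions of $P_a,P_a'$ on the exceptional chain are therefore $\delta$ and $\nu(I_3'')-\delta$, which agree with your $\tfrac12\nu(I_6)$ only when $2\delta\geq\nu(\Delta_a)$; precisely in your case (i), where $2\delta<\nu(\Delta_a)$ and $\nu(I_3'')=2\delta$, both points sit at depth $\tfrac12\nu(I_3'')$ and coincide there to first order, and the leg of IV*.2 comes from a further blow-up transverse to the chain. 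Your ``the positions cross over'' is a symptom of the wrong formula rather than an argument. This is exactly what the paper's coordinate $\xi_3$ and the quantity $\delta$ are designed to detect; since you say the actual verification would be by explicit coordinates as in Example~\ref{exa:blowup}, this part is a gap of rigor rather than of substance (you also need the extra blow-up at $(1:0:0)$ when $\nu(A)>0$ to separate $P_b'$ from $P_c$, the paper's ``Claim'', though it does not change which graph occurs).

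The genuine gap is in (iii). The assertion that the condition $\nu(I_6)=\nu(I_3'')$ ``makes this cross-ratio harmonic'' is unjustified and, if you carry out your own computation, false: normalizing $B=C=1$, $a=2+\epsilon$, the four special points on the middle component reduce in a suitable coordinate to $0,\infty,\kappa-d,\kappa+d$ with $d^2=\overline{\Delta_b}\neq 0$ and $\kappa$ the residue of $(c-b)/\sqrt{\epsilon}$, so the cross-ratio is $(\kappa-d)/(\kappa+d)$, which lies in the harmonic orbit $\{-1,2,1/2\}$ only for $\kappa\in\{0,\pm 3d\}$ --- a condition not implied by $\nu(I_6)=\nu(I_3'')$. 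For instance $(A,B,C,a,b,c)=(1,1,1,2+t^2,0,t)$ satisfies all hypotheses of (b.iii) and yields the four points $0,\infty,\tfrac12\pm i$, with cross-ratio $(1+2i)/(1-2i)$ and $j\equiv 21296/25$. The paper's proof of (iii) uses a different idea that your proposal is missing: since $\sigma_b$ and $\sigma_c$ swap the two components of $\Yb$, the elliptic curve $E_b=Y/\langle\sigma_b\rangle$ has good reduction with $\overline{E_b}\simeq\Yb_1$, and one computes $j(E_b)$ from the model $w^2=\Delta_c x^4+(2ac-4bB)x^2+\Delta_a$. You should adopt that reduction to $E_b$; note, however, that even this route gives $j\equiv 1728$ only when $\nu(2ac-4bB)>\tfrac12\nu(\Delta_a)$ (so that the middle coefficient vanishes in the rescaled reduction), which the example above violates --- so the final claim of (iii) requires more care than either your cross-ratio assertion or a naive reading of the paper's normal form supplies.
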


\begin{proof} It follows from the conditions $\nu(\Ivi)>0$ and $\nu(I)=0$ that exactly one among $\nu(\Delta_a),\nu(\Delta_b),\nu(\Delta_c)$ is positive.  Without loss of generality we may assume that
	$\nu(\Delta_a)>0$. Then $\nu(I) =0$ also implies $\nu(BC) = 0$. It follows from Proposition \ref{prop:degeneratedconicproperties} that $\Xb_0$ is reduced, has two irreducible components and the points $P_a, P_a'$ both specialize to the intersection of these two components.

	Now, fix \(\beta\) and \(\gamma\) to be roots of \(p_b(T)=T^2 - 2bT +
	4AC\) and \(p_c(T)=T^2 - 2cT + 4AB\), respectively,  that have valuation $0$. 
	
	\textbf{Claim}: We  may choose $\beta$ and $\gamma$ so that they additionally satisfy \(\nu((2b - \beta)\gamma + \beta(2c - \gamma) - 4Aa)
	>0\). Moreover, there exist coordinates $\xi_1$ and $\xi_2$ such that $P_a$ and $P_a'$ specialize to the intersection point $\tau$ of the corresponding irreducible components $\Xb_1$ and $\Xb_2$ and  $P_b'$ and $P_c$ (respectively~$P_b$ and $P_c'$) specialize to pairwise distinct points of  $\Xb_1$ (respectively~$\Xb_2$) different from $\tau$.
	
	Assume first that $\nu(A) = 0$. Then all the roots of $p_b$ and $p_c$ have valuation $0$. In this case it follows from the proof of Proposition \ref{prop:degeneratedconicproperties}.(iii) that we may choose $\beta$ and $\gamma$ as in the claim.  That proof then also implies that $P_b'$ and $P_c$ (resp.~$P_b, P_c'$) specialize to the same component of $\Xb_0$. Moreover, Proposition \ref{prop:degeneratedconicproperties}.(i) implies that the points $P_b, P_b',P_c, P_c'$ specialize to pairwise distinct points on $\Xb_0$. This proves the claim in this case.
	
	Next assume that $\nu(A)$ is positive. Then  \(\nu((2b - \beta)\gamma + \beta(2c - \gamma) - 4Aa)
	>0\) for any choice of $\beta$ and $\gamma$, but there are unique roots $\beta$ of $p_b$  and $\gamma$ of $p_c$ with $\nu(\beta) = \nu(\gamma) = 0$. With this choice, we have that $P_b'$ and $P_c$ both specialize to the point \((1:0:0)\) in the smooth locus of $\Xb_0$ and in particular to the same irreducible component of $\Xb$.  Let $\xi_1$ and $\xi_2$ be coordinates corresponding to $t_1=(P_b', P_c, P_a)$ and $t_2=(P_b, P_c',P_a)$ as in Section \ref{sec:compute}. The corresponding components $\Xb_1$ and $\Xb_2$ satisfy the conditions in the claim.

	It remains to compute a component separating $P_a, P_a'$. Consider
	\[ 
	\xi_3 = \dfrac{\beta u + \alpha v + 2Cw}{2v(\alpha - a)}.
	\]
	This is the coordinate corresponding to $t_3=(P_a', P_a, P_b')$.
	One checks that \(\overline{\xi_3(P_c)} = \infty\). The decorated graph of \(X\) depends on the value of \(\overline{\xi_3(P_b)} = \overline{\xi_3(P_c')}\).
	Using Equation \eqref{eq: G} and that $\beta$ is a root of $p_b$ we find the equivalent expression for the coordinate
	\[\xi_3 = \dfrac{-2\beta(cu + Bv + aw) + \alpha(2Au + \beta w)}{2(\alpha-a)(2Au + \beta w)}. \]
	We obtain
	\[\xi_3(P_b) = \dfrac{4cC  - \alpha(2b - \beta)- (2a - \alpha)\beta}{4(\alpha - a)(\beta - b)}.\]
	
	Note that the discriminant factors as follows:
	\[\Delta(X) = (4cC  - \alpha(2b - \beta)- (2a - \alpha)\beta)(4cC - \alpha\beta - (2a - \alpha)(2b - \beta)).\]
	
	Let \(\delta = \nu(- 4Cc + \alpha(2b-\beta) + (2a - \alpha)\beta )\) and note
	\[
	\begin{aligned}
	\nu( - 4Cc + \alpha\beta + (2a - \alpha)(2b - \beta)) &= \nu(4(\alpha - a)(\beta - b)  - 4Cc + \alpha(2b - \beta) + (2a - \alpha)\beta) \\&\geq \min(\nu((\alpha - a)(\beta - b)), \delta),
	\end{aligned}
	\]
	where \(\nu(\alpha -a) = \frac{1}{2}\nu(\Delta_a)\). Then, we are in one of the following cases:
	\begin{itemize}
		\item If \(2\delta < \nu(\Delta_a)\), then one has \(\nu(\Delta(X)) = 2\delta < \nu(\Delta_a)\) and \(\overline{\xi_3(P_b)} = \infty\). In this case, \(\overline{X}\) is of type IV*.2. The component $\Xb_3$ is the left most vertical component. 
		\item If \(2\delta > \nu(\Delta_a)\), then one has \(\nu(\Delta(X)) = \delta + 1/2\nu(\Delta_a) > \nu(\Delta_a)\), and \(\overline{\xi(P_b)} = 0\). In this case, \(\overline{X}\) is of type IV.5. The component $\Xb_3$ is the one in the middle that intersects $\Xb_1$.  
		\item If \(2\delta = \nu(\Delta_a)\) and \(\nu(\alpha\beta + (2a - \alpha)(2b - \beta) - 4Cc)>\delta\), then one has \(\nu(\Delta(X)) > \nu(\Delta_a)\), and \(\overline{\xi(P_b)} = 1\). In this case, \(\overline{X}\) is again of type IV.5, but the component $\Xb_3$ is the one in the middle that intersects $\Xb_2$.  
		\item Lastly, if \(2\delta = \nu(\Delta_a)\) and \(\nu(\alpha\beta + (2a - \alpha)(2b - \beta) - 4Cc) = \delta\), then one has \(\nu(\Delta(X)) = \nu(\Delta_a)\), and \(\overline{\xi(P_b)} \neq 0, 1, \infty\). In this case, \(\overline{X}\) is of type III.4. The component $\Xb_3 $ is the central one.
	\end{itemize}
	This yields the case distinction from the statement of the lemma. 
	
	It remains to compute the $j$-invariant of the two components of the stable reduction $\Yb$ of $Y$ in case (iii) of the lemma. In this case $\Yb$ consists of two genus-$1$ curves $\Yb_1$ and $\Yb_2$ intersecting in two points. These two curves are permuted by the action of $\sigma_b$ and $\sigma_c$. Hence $\Yb_1 \simeq \Yb_2$. The map $f:Y\to X$ extends to a finite and flat map $\fb:\Yb\to \Xb_3$, where $\Xb_3$ is the special fiber of the model of $X$  defined by the coordinate $\xi_3$. Moreover, $\fb$ is Galois with Galois group $V$.

	The map $\fb:\Yb \to \Xb$ factors as $\Yb \to \Yb/\langle \sigma_i \rangle \to \Xb$ for $i \in \{a,b,c\}$. We write $\Yb^{\circ} = \Yb_1\sqcup \Yb_2$ for the normalization of $\Yb$. The induced map $\fb: \Yb^{\circ}\to \Yb/\langle \sigma_i \rangle$ for $i=b,c$ just identified the two irreducible components. It follows that the elliptic curve
	$E_i=Y/\langle \sigma_i\rangle$ has good reduction for $i=b,c$ and its reduction is $\overline{E_i} = \Yb/\langle\sigma_i\rangle$. This implies that $\overline{E_b}$ and $\overline{E_c}$ are elliptic curves and they are both isomorphic to $\Yb_1 \simeq \Yb_2$. Therefore
	\[
	j(\Yb_1) \equiv j(\Yb_2) \equiv j(E_b) \equiv j(E_c) \pmod{\pi}.
	\]
	To compute $j(\Yb_1)$ it remains therefore to  compute the reduction of  $j(E_b)\pmod{\pi}$. 
	
	We have $E_b:\,Ax^4+Bv^2+Cz^4+avz^2+bz^2x^2+cx^2v=0$. After a change of coordinates in $\GL_3(\OO)$,  we can assume $B=C=1$, $a=2+a_1\pi_1$ and $c=b+b_1\pi^2_1$ for a suitable element $\pi_1\in\OO$ of positive valuation. A direct computation of the $j$-invariant yields $j(E_b)\equiv 1728\text{ mod }\pi$.
   \hfill $\qed$ 
   \end{proof}

\begin{lemma}[Theorem~\ref{THM:Main_DegConic}, case (c)]\label{lem:degeneratehyp}
	Let $Y$ be as in Theorem \ref{THM:Main_DegConic} and assume that \(\nu(\Iiii)=0,\nu(\Iiii')>0, \nu(\Ivi)>0\) and $\nu(I)> 0$. Then $Y$ has (maybe bad) hyperelliptic reduction. More specifically, 
	\begin{enumerate}[(i)]
		\item if \(2\nu(I_6) = 3\nu(I_3'') \leq 6\nu(I_3')\), then \(Y\) has good hyperelliptic reduction,
		\item if \(2\nu(I_6) > 3\nu(I_3'')\) and \(2\nu(I_3')\geq \nu(I_3'') = \nu(I_3'^2 - 16I_3I_3'')\), then the decorated graph has type II.3 and the reduction of the curve is Loop,
		\item if \(2\nu(I_6) > 3\nu(I_3'')\) and \(2\nu(I_3') = \nu(I_3'') < \nu(I_3'^2 - 16I_3I_3'')\), then the decorated graph has type III.1 and the reduction of the curve is DNA,
		\item if \(2\nu(I_6) < 3\nu(I_3'')\) and \(3\nu(I_3') \geq \nu(I_6)\), then the decorated graph has type II.2 and the reduction of the curve is DNA,
		\item if \(3\nu(I_3') < \nu(I_3'I_3'') < \nu(I_6)\), then the decorated graph has type IV*.2 and the reduction of the curve is Cat,
		\item if \(3\nu(I_3')< \nu(I_6) < \nu(I_3'I_3'')\), then the decorated graph has type IV.5 and the reduction of the curve is Grl Pwr, and
		\item if \(3\nu(I_3') < \nu(I_3'I_3'') = \nu(I_6)\), then the decorated graph has type III.4 and the reduction of the curve is Candy.
	\end{enumerate}
\end{lemma}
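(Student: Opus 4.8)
The plan is to follow the strategy of Example~\ref{exa:blowup}: determine the special fiber $(\Xb,\Db)$ of the stably marked model of $(X,D)$, read off the associated decorated graph, and deduce the stable reduction of $Y$ exactly as in the proof of Theorem~\ref{FromGraphtoRedType}. The new feature here, and the main obstacle, is that under the present hypotheses the model $\mathcal{X}_0$ of Equation~\eqref{eq: G} has \emph{non-reduced} special fiber, so $\Xb_0$ is not itself a component of $\Xb$; as anticipated in Remark~\ref{rem:blowup}.(a), one must first pass to a ramified extension of $K$ and rescale in order to obtain a model with reduced special fiber before the coordinate method applies.

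First I would show that the conic degenerates to a double line. Since $\nu(I_3)=\nu(ABC)=0$ and the coefficients are integral, $A,B,C$ are units. Using $I_3'=A\Delta_a+B\Delta_b+C\Delta_c$, $I_6=\Delta_a\Delta_b\Delta_c$ and $I=AB\Delta_a\Delta_b+AC\Delta_a\Delta_c+BC\Delta_b\Delta_c$ together with the hypotheses $\nu(I_3'),\nu(I_6),\nu(I)>0$, a short case analysis shows that all three of $\nu(\Delta_a),\nu(\Delta_b),\nu(\Delta_c)$ are positive: if none were positive $I_6$ would be a unit, if exactly one $I$ would be a unit, and if exactly two $I_3'$ would be a unit, contradicting the hypotheses. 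Hence Proposition~\ref{prop:degeneratedconicproperties}.(ii) shows that $\Xb_0$ is non-reduced with irreducible $\Xb_0^{\mathrm{red}}$.

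Next I would identify the (potentially bad) hyperelliptic reduction. Over a finite extension we may choose $\sqrt A,\sqrt B,\sqrt C$ with $a\equiv 2\sqrt{BC}$, $b\equiv 2\sqrt{AC}$, $c\equiv 2\sqrt{AB}\pmod{\pi}$ (possible by Proposition~\ref{prop:degeneratedconicproperties}.(ii)); the defining equation then reads
\[
(\sqrt A\,x^2+\sqrt B\,y^2+\sqrt C\,z^2)^2=-\bigl(\delta_a\,y^2z^2+\delta_b\,x^2z^2+\delta_c\,x^2y^2\bigr),
\]
where $\delta_a=a-2\sqrt{BC}$, $\delta_b=b-2\sqrt{AC}$, $\delta_c=c-2\sqrt{AB}$ satisfy $\nu(\delta_i)=\nu(\Delta_i)>0$. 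Thus $Y$ is a deformation of the double conic $(\sqrt A x^2+\sqrt B y^2+\sqrt C z^2)^2=0$, which is precisely the degeneration by which a plane quartic acquires hyperelliptic reduction (compare \cite{LLLR}); the additional involution is the one of Lemma~\ref{lem:Galois}.(2) interchanging the two branch points $P_i,P_i'$ with the same inertia generator. Rescaling the $V$-invariant function $s=\sqrt A x^2+\sqrt B y^2+\sqrt C z^2$ by the power of a uniformizer dictated by the $\nu(\delta_i)$ yields a model of $X$ with reduced special fiber, to which the method of Example~\ref{exa:blowup} applies.

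Finally I would carry out the case distinction on this rescaled model. The configuration of the six branch points on $\Xb_0^{\mathrm{red}}\cong\PP^1$ is controlled by comparing the valuations of $I_3',I_3'',I_6$, with the auxiliary quantity $\nu(I_3'^2-16I_3I_3'')$ — the discriminant of the quadratic $T^2-I_3'T+4I_3I_3''$ — deciding whether a distinguished pair of branch points stays separated; this is exactly what distinguishes case~(ii) from case~(iii). The balanced regime $2\nu(I_6)=3\nu(I_3'')\le 6\nu(I_3')$ is the one in which all six points remain pairwise distinct after a single rescaling, so that the reduced cover is smooth and $Y$ has good hyperelliptic reduction (case~(i)); in the other regimes some points still collide and further coordinates $\xi_t$ are needed to separate them, producing the graphs II.3, III.1, II.2, IV*.2, IV.5 and III.4 and the reduction types Loop, DNA, Braid and Candy recorded in the statement. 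I would organize this bookkeeping through the modified hyperelliptic invariants of Section~\ref{SSec:invariants}, which both explain the form of the valuation conditions and, via Corollary~\ref{cor:hyp_jinvariants}, supply the invariants of the positive-genus components. The hardest part is this valuation bookkeeping across the base change, together with pinning down the boundary cases where two of the valuations coincide.
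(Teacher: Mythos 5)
Your proposal follows essentially the same route as the paper: show that all three of $\nu(\Delta_a),\nu(\Delta_b),\nu(\Delta_c)$ are positive so that the conic reduces to a double line, rewrite the quartic as a perturbation of $(\sqrt{A}x^2+\sqrt{B}y^2+\sqrt{C}z^2)^2=0$ to exhibit (possibly bad) hyperelliptic reduction, and then settle the case distinction via the hyperelliptic invariants $L_1,L_2,L_3$ of Section~\ref{SSec:invariants} and Theorem~\ref{THM:hyperelliptic}. The one step you leave unexecuted is the computation the paper's proof actually hinges on, namely that the induced hyperelliptic model has $(L_1:L_2:L_3)=(2I_3':16I_3I_3'':-4I_6I_3)$ in $\PP^2_{1,2,3}$, which is what converts the conditions of Theorem~\ref{THM:hyperelliptic} into the precise valuation conditions (i)--(vii) of the statement.
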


\begin{proof}
	It follows from the conditions on the invariants that \(\nu(A), \nu(B)\) and \(\nu(C)\) are zero and that \(\nu(\Delta_a)\), \(\nu(\Delta_b)\) and \(\nu(\Delta_c)\) are all positive.
	
	Therefore, the special fiber \(\overline{X}_0\) of the model of $X$ defined by Equation \eqref{eq: G} is non-reduced, see Proposition~\ref{prop:degeneratedconicproperties}. We claim that  \(Y\) has (not necessarily good) hyperelliptic reduction.
	
	Note that there exists a choice of square roots such that the equation of $Y$ can be written as
	$$
	(\sqrt{A}x^2+\sqrt{B}y^2+\sqrt{C}z^2)^2+(a-2\sqrt{BC})y^2z^2+(b-2\sqrt{AC})x^2z^2+(c-\sqrt{AB})x^2y^2=0,
	$$
	in such a way that the coefficients of $x^2y^2$, $y^2z^2$ and $z^2x^2$ have all positive valuation. Let $\pi_1\in\mathcal{O}$ be an element with valuation $\operatorname{min}\{\nu(a-2\sqrt{BC}), \nu(b-2\sqrt{AC}), \nu(c-\sqrt{AB}) \}/2$. Assume that the minimum of these valuations is attained by $a-2\sqrt{BC}$, then we can rewrite the equation of $Y$ as follows:
	\begin{equation}
	\label{eq:redhyp}
	Y:\,\begin{cases}
	\pi_1^2t^2=-((a-2\sqrt{BC}) y^2z^2+(b-2\sqrt{AC}) x^2z^2+(c-\sqrt{AB}x^2y^2)^r)\\
	\pi_1t=\sqrt{A}x^2+\sqrt{B}y^2+\sqrt{C}z^2
	\end{cases}.
	\end{equation}
	After making a suitable change of coordinates in $\GL_3(K)$ that sends the conic $\sqrt{A}x^2+\sqrt{B}y^2+\sqrt{C}z^2=0$ to $x_1^2-y_1z_1=0$, and taking $z_1=1$ and $y_1=x_1^2$ in the reduction of the first equation defining $Y$ in Equation \eqref{eq:redhyp}, we get a hyperelliptic equation $t^2=x_1^8+Mx_1^6+Nx_1^4+Mx_1^2+1$ with coefficients: 
	$$M=-4\frac{b\sqrt{B}-c\sqrt{C}}{\sqrt{A}(a-2\sqrt{BC})}$$
	$$N=-2+8\frac{b\sqrt{B}+c\sqrt{C} - 4\sqrt{ABC}}{\sqrt{A}(a-2\sqrt{BC})}.$$ 
	
	Its invariants, as defined in Proposition~\ref{prop:invhyp}, are 
	\begin{equation*}
	(L_1:L_2:L_3)=(2I_3':16I_3I_3'': - 4I_6I_3 ) \in \mathbb{P}_{1,2,3}^2.
	\end{equation*}

	Notice that this equality of projective points is not a coordinate-wise equality, but one in a weighted projective space. 
	
	Let $\pi_2\in\mathcal{O}$ be an element of valuation $\operatorname{min}(\nu(L_1),\nu(L_2)/2, \nu(L_3)/3)$. Then we have
	\begin{equation*}
	(L_1:L_2:L_3)=\left(\frac{2I_3'}{\pi_2}:\frac{16I_3I_3''}{\pi_2^{2}}: \frac{- 4I_6I_3}{\pi_2^{3}} \right) \in \mathbb{P}_{1,2,3}^2.
	\end{equation*}
	This is still not a coordinate-wise equality, but a coordinate-wise valuation equality. 
	It follows that we may assume that $\min(\nu(L_1),\nu(L_2), \nu(L_3))=0$.  Rewriting the normalized invariants  in Theorem \ref{THM:hyperelliptic}  in terms of $I_3, I_3', I_3'', I_6$ and $I$ yields the result.
   \hfill $\qed$ 
   \end{proof}

\begin{lemma}[Theorem~\ref{THM:Main_DegConic}, case (d)]\label{lemma:CaseDd} %case (d) CAVE
	Let $Y$ be as in Theorem~\ref{THM:Main_DegConic}, in particular $\nu(I_3'')>0$. Assume that
	\(\nu(\Iiii)>0, \nu(\Ivi)=0\) and \(\nu(I)=0\), then the decorated graph has type III.7 and the reduction type of the curve is Cave.
\end{lemma}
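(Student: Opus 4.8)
The plan is to follow the same strategy as in the other cases of Theorem~\ref{THM:Main_DegConic}: first translate the conditions on the invariants into valuations of the coefficients $A,B,C,a,b,c$, then use Proposition~\ref{prop:degeneratedconicproperties} to describe the reduction $\Xb_0$ of the conic and the specialization of the six branch points, and finally read off the decorated graph and the stable reduction of $Y$ exactly as in the proof of Theorem~\ref{FromGraphtoRedType}.

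First I would determine the valuations. The condition $\nu(I_6)=0$ forces $\nu(\Delta_a)=\nu(\Delta_b)=\nu(\Delta_c)=0$, while $\nu(I_3)>0$ together with $\nu(I)=0$ forces exactly one of $\nu(A),\nu(B),\nu(C)$ to be positive (since $\nu(I)=0$ requires $\min(\nu(AB),\nu(AC),\nu(BC))=0$, whence at least two of $A,B,C$ are units); after permuting the variables we may assume $\nu(C)>0$ and $\nu(A)=\nu(B)=0$. Combining this with $\nu(\Delta_a)=\nu(\Delta_b)=0$ and the shapes $\Delta_a=a^2-4BC$, $\Delta_b=b^2-4AC$ (whose constant terms now have positive valuation) gives $\nu(a)=\nu(b)=0$. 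Finally $\nu(I_3'')=\nu(\Delta(X))>0$ becomes the congruence $Aa^2+Bb^2-abc\equiv 0\pmod\pi$; this is exactly the condition for the reduced left-hand side of Equation~\eqref{eq: G} to factor as a product of two distinct lines, one of which, $\bar b\, u+\bar a\, v=0$, passes through $(0:0:1)$.

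Next I would describe $\Xb_0$ and the specialization of the branch points. By Proposition~\ref{prop:degeneratedconicproperties}.(i) the curve $\Xb_0$ is reduced and reducible, and the two points of each pair $P_i,P_i'$ lie on different components. A direct computation with Equation~\eqref{eq:branchV4}, using that $p_a$ and $p_b$ each have one unit root and one root of positive valuation (because $\nu(BC),\nu(AC)>0$) whereas both roots of $p_c$ are units, shows that one point of the $\sigma_a$-pair and one point of the $\sigma_b$-pair both reduce to the \emph{smooth} point $(0:0:1)$ of $\Xb_0$, while the remaining four branch points reduce to pairwise distinct points away from the node and from $(0:0:1)$. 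The coincidence at $(0:0:1)$ means $\Xb_0$ is not stably marked; blowing up separates the two colliding points and introduces a third projective line $\Xb_3$ meeting the component $\bar b\, u+\bar a\, v=0$ at $(0:0:1)$ and carrying the two separated marked points. The resulting $(\Xb,\Db)$ is a chain of three lines carrying $(3,1,2)$ marked points which, after labelling the nodes by Lemma~\ref{lem:graph}, is precisely the decorated graph of type III.7.

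Finally I would build the admissible $V$-cover $\fb:\Yb\to\Xb$ as in Lemma~\ref{lem:adm} and contract to obtain the stable curve, as in the proof of Theorem~\ref{FromGraphtoRedType}. The two end components carry all three inertia generators $\sigma_a,\sigma_b,\sigma_c$, so each has a single connected genus-$0$ cover, whereas the middle component is branched only along $\langle\sigma_c\rangle$ and splits into two genus-$0$ components; computing the dual graph gives first Betti number $3$ and no component of positive genus, and after contracting the superfluous genus-$0$ component above $\Xb_3$ one obtains the stable curve of type Cave. I expect the main obstacle to be the branch-point specialization in the second step: one must pin down not only which component each $P_i,P_i'$ lands on (which Proposition~\ref{prop:degeneratedconicproperties} already controls) but the finer fact that two of them collide at the \emph{same} smooth point $(0:0:1)$, forcing the blow-up that distinguishes type III.7 from the other type-III graphs. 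The remaining bookkeeping of genera and the contraction is then routine, and in particular explains why no $j$-invariant appears in the statement.
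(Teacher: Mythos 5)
Your proof is correct and follows essentially the same route as the paper: derive $\nu(\Delta_a)=\nu(\Delta_b)=\nu(\Delta_c)=0$ and exactly one of $\nu(A),\nu(B),\nu(C)$ positive, observe that the conic reduces to two distinct lines with one point from each of two branch-point pairs colliding at a smooth point (forcing the third component of the chain) while the remaining four branch points stay separated from each other and from the node, and conclude type III.7, hence Cave. You merely spell out more explicitly the factorization of the reduced conic and the admissible-cover/contraction bookkeeping that the paper delegates to Lemma~\ref{lemma:specialization_of_points}, Proposition~\ref{prop:degeneratedconicproperties} and the proof of Theorem~\ref{FromGraphtoRedType}.
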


\begin{proof}
	From the conditions on the invariants, it follows that $\nu(\Delta_a) = \nu(\Delta_b) = \nu(\Delta_c) = 0$ and exactly one of $\nu(A), \nu(B), \nu(C)$ is positive. We may assume that $\nu(A)>0$. Proposition \ref{prop:degeneratedconicproperties} implies that $\Xb_0$ is reducible and that branch points with the same inertia generator specialize to different irreducible components of $\Xb_0$. Proposition \ref{Prop:norm1} implies that one of the branch points $P_b, P_b'$ and one of the branch points $P_c, P_c'$ specialize to the same irreducible component of $\Xb_0$. Moreover, that proposition implies that none of the branch points specialize to the singular point of $\Xb_0$.  Hence the decorated graph has type III.7 and the reduction type of the curve is Cave.
   \hfill $\qed$ 
   \end{proof}

\begin{lemma}[Theorem~\ref{THM:Main_DegConic}, case (e)]\label{lemma:CaseDe} %case (e) BRAID
	Let $Y$ be as in Theorem~\ref{THM:Main_DegConic}, in particular $\nu(I_3'')>0$. Assume that \(\nu(\Iiii)>0, \nu(\Iiii')=0\) and \(\nu(I)>0\), then the decorated graph has type IV.4 and the reduction of the curve is Braid.
\end{lemma}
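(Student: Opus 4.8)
The plan is to follow the strategy of Section~\ref{sec:compute} used in the preceding lemmas: determine the valuations of $A,B,C,a,b,c$ from the four conditions on the invariants, read off the special fiber $\Xb_0$ of the model $\mathcal{X}_0$ defined by Equation~\eqref{eq: G} together with the specialization of the six branch points, and then resolve the resulting (non-stably-marked) configuration into a decorated graph. First I would translate the hypotheses $\nu(\Iiii)>0$, $\nu(\Iiii')=0$, $\nu(\Iiii'')>0$, $\nu(I)>0$ into valuations. Since $\nu(\Iiii')=0$, at least one of $A\Delta_a$, $B\Delta_b$, $C\Delta_c$ is a unit; after permuting the variables I may assume $\nu(C)=\nu(\Delta_c)=0$. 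The relation~\eqref{eq:I} is the key input: the summands $4I$, $16\Iiii\Iiii''$, $2\Iiii'\Iiii''$ and $\Iiii''^2$ all have positive valuation while $\nu((\Iiii')^2)=0$, so it forces $\nu(\Ivi)=0$ and hence $\nu(\Delta_a)=\nu(\Delta_b)=\nu(\Delta_c)=0$. With all $\Delta_i$ units and $\nu(C)=0$, the three summands of $I=AB\Delta_a\Delta_b+AC\Delta_a\Delta_c+BC\Delta_b\Delta_c$ have valuations $\nu(A)+\nu(B)$, $\nu(A)$ and $\nu(B)$; as no two terms of minimal valuation can cancel, $\nu(I)>0$ forces $\nu(A)>0$ and $\nu(B)>0$, compatibly with $\nu(\Iiii)=\nu(A)+\nu(B)>0$. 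From $\Delta_a=a^2-4BC$, $\Delta_b=b^2-4AC$, $\Delta_c=c^2-4AB$ being units I then read off $\nu(a)=\nu(b)=\nu(c)=0$; and since the only unit-valuation terms of $\Iiii''=\Delta(X)$ are $Cc^2$ and $-abc$, the hypothesis $\nu(\Iiii'')>0$ is exactly the degeneracy condition $\nu(Cc-ab)>0$.

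Next I would describe $\Xb_0$ and the branch locus. By Proposition~\ref{prop:degeneratedconicproperties} the combination $\nu(\Iiii'')>0$ with $\nu(\Delta_i)=0$ for all $i$ makes $\Xb_0$ a reduced, reducible conic, i.e.\ two lines $\Xb_1,\Xb_2$ meeting in a node, with $P_i,P_i'$ on different components for every $i$. Using $\bar a\bar b=\bar c\bar C$ I would factor the reduction of Equation~\eqref{eq: G} as $(\bar Cw+\bar bu)(\bar Cw+\bar av)=0$ and reduce the six points of Equation~\eqref{eq:branchV4}. Because $p_a,p_b,p_c$ each have one unit root and one root of positive valuation (their constant terms $4BC,4AC,4AB$ have positive valuation while $2a,2b,2c$ are units), a short computation gives $\overline{P_a}=\overline{P_c'}=(0:1:0)$ on $\Xb_1$ and $\overline{P_b'}=\overline{P_c}=(1:0:0)$ on $\Xb_2$, while $\overline{P_b}$ and $\overline{P_a'}$ land at distinct smooth points of $\Xb_1$ and $\Xb_2$. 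Thus exactly two pairs of branch points with \emph{different} inertia generators collide: $P_a,P_c'$ on $\Xb_1$ and $P_b',P_c$ on $\Xb_2$.

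I would then resolve the two collisions. Each colliding pair forces a component to bud off at the collision point, producing an outer component $\Xb_3$ (carrying $P_a,P_c'$ and attached to $\Xb_1$) and $\Xb_4$ (carrying $P_b',P_c$ and attached to $\Xb_2$); together with the node $\Xb_1\cap\Xb_2$ this gives the chain $\Xb_3-\Xb_1-\Xb_2-\Xb_4$, with two marked points on each outer component and one on each inner component. Labelling the nodes by Lemma~\ref{lem:graph} (the product of the inertia generators on each component must be trivial, giving edge labels $\sigma_b,\ \mathrm{id},\ \sigma_a$ along the chain) identifies this as the decorated graph of type IV.4. Reconstructing the admissible cover with Lemma~\ref{lem:adm} and applying Riemann--Hurwitz componentwise — a single connected rational component over each outer component (three distinct inertia generators) and two rational components over each inner component (a single inertia generator, hence a disconnected cover) — and contracting the two rational components lying over $\Xb_3$ and $\Xb_4$, yields a curve of arithmetic genus $3$ whose components are all rational: the Braid type.

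I expect the main obstacle to be the separation of the two colliding pairs: one must check, by exhibiting an auxiliary coordinate as in Example~\ref{exa:blowup} and computing the relevant cross ratio, that a single budding component already separates $P_a$ from $P_c'$ (respectively $P_b'$ from $P_c$) and that no further blow-up is needed. The second delicate point is the bookkeeping of the admissible cover over the chain — how many components of $\Yb$ lie over each $\Xb_i$ and how they are glued over the nodes of inertia $\sigma_b$, trivial and $\sigma_a$ — so as to confirm that the dual graph of $\Yb$ has first Betti number $3$ and collapses to the Braid configuration. In contrast to the cases with components of positive genus, here no $j$-invariant has to be computed, which shortens the argument.
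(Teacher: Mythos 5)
Your proof is correct and takes essentially the same approach as the paper: both deduce from Equation~\eqref{eq:I} that all $\Delta_i$ are units while exactly two of $\nu(A),\nu(B),\nu(C)$ are positive, use Proposition~\ref{prop:degeneratedconicproperties} to see that $\Xb_0$ is two lines separating each pair $P_i,P_i'$, identify the two collisions of branch points with distinct inertia generators, and conclude type IV.4 and reduction type Braid. You merely make explicit, via the factorization of the reduced conic and the direct specialization of the six points, what the paper cites from Lemma~\ref{lemma:specialization_of_points} and Proposition~\ref{prop:degeneratedconicproperties}.(iii); and the separation step you flag as a possible obstacle is automatic, since a stable subtree of $\Xb$ hanging off $\Xb_1$ and carrying only the two marked points $P_a,P_c'$ must consist of a single component.
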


\begin{proof}
	From the conditions on the invariants and Equation \eqref{eq:I}, it follows that $\nu(\Delta_a) = \nu(\Delta_b) = \nu(\Delta_c) = 0$ and  exactly two among $\nu(A), \nu(B), \nu(C)$ are positive. We may assume, without 
	loss of generality, that $\nu(C)=0$.  Now Proposition \ref{prop:degeneratedconicproperties} implies that $\Xb_0$ is reducible and that  branch points with the same inertia generator specialize to different irreducible components  of $\Xb_0$.  Moreover,  Proposition \ref{prop:degeneratedconicproperties}.(iii) implies that $P_c$ specializes to the same point as one of $\{P_b,P_b'\}$ on $\Xb$ and $P_c'$ specializes to the same point as one of $\{P_a,P_a'\}$ up to renaming $P_c$ and $P_c'$. Hence the decorated graph has type IV.4. The two irreducible components of $\Xb_0$ are the central ones. The reduction of the curve is Braid.
   \hfill $\qed$ 
   \end{proof}

\section{Hyperelliptic case} \label{Sec:Hyper}

In this section we give an analogous result to Theorems \ref{THM:Main_NonDegConic} and \ref{THM:Main_DegConic} for the hyperelliptic case, i.e., for curves in $\mathcal{M}_{3,V}^{\text{hyp}}$.
Recall that $(K, \nu)$ is a complete discretely valued field of
characteristic $0$ and residue characteristic $p\geq 0$ different from $2$. Recall
that we replace $K$ by a finite extension, if necessary, without
changing the notation. 

Let $Y/K$ be a genus-$3$ hyperelliptic curve such that
$\Aut_{\Kb}(Y)$ contains a subgroup $V\simeq C_2\times C_2$
such that for every non-trivial element \(\sigma\in V\) the quotient $Y/\langle\sigma\rangle$ has genus~$1$. Then we can write (see \cite[Section 4.3]{IreneThesis} or \cite[Table 3]{LR11}): 
\begin{equation} \label{hyp: Y}
Y:\,y^2=x^8+Mx^6+Nx^4+Mx^2+1,
\end{equation}
and we identify $V$ with the group generated by
$$
\sigma_1(x,y)=(-x,y)\text{ and }\sigma_2(x,y)=(1/x,y/x^4).
$$

We set $\sigma_3 :=\sigma_1\sigma_2$. 

In particular, the genus-3 hyperelliptic curve \(Y\) is a \(V\)-Galois cover of a conic, and we obtain the following diagram. 
\begin{equation*}
\xymatrix{
	& Y\ar@{-}[ld]\ar@{-}[d]\ar@{-}[rd] & \\
	Y/\langle\sigma_1\rangle=E_1\ar@{-}[rd]&Y/\langle\sigma_2\rangle=E_2\ar@{-}[d] & Y/\langle\sigma_3\rangle=E_3\ar@{-}[ld]\\
	&Y/V= X &  
}
\end{equation*}
where
\begin{align*}
Y:\, &y^2 =x^8+Mx^6+Nx^4+Mx^2+1,\\
X:\,&w^2=v^2+(M-4)v+(-2M+N+2)
\end{align*}
with $w=\frac{y}{x^2}$ and $v=(x+\frac{1}{x})^2$. Moreover, we compute the discriminants
\begin{align*}
\Delta(Y)=&2^{4}(-2M+N+2)^2(2M+N+2)^2(M^2-4N+8)^4,\\
\Delta(X)=&M^2-4N+8.
\nonumber\end{align*}

The problem with the hyperelliptic model given in Equation \eqref{hyp: Y} is that it is singular at the infinity point $(0:1:0)$ and it is not easy to keep track of the ramification data.

Instead, we choose to work with the smooth model: 
$$
Y:\,\begin{cases}
t^2=y^4+My^3z+Ny^2z^2+Myz^3+z^4,\\
0=x^2-yz
\end{cases}\subseteq\mathbb{P}^3_{K,(1,1,1,2)},
$$ 
where now the automorphisms are given by 
\begin{equation} \label{eq: hyp_autom}
\sigma_1((x : y : z : t)) = (-x : y : z : t)\text{ and }\sigma_2((x : y : z : t))= (x : z : y : t).
\end{equation}

We get the following equation for the conic $X:=Y/V$:
\begin{equation}\label{conichyp}
\begin{aligned}
Y&\rightarrow X:\,w^2=v^2+(M-4)uv 
+ (-2M + N + 2)u^2\subseteq\mathbb{P}^2_K\\
(x:y:z:t)&\mapsto(u:v:w)=(x^2:(y+z)^2:t),
\end{aligned}
\end{equation}
and the six branch points in $X$ are 
\begin{equation}\label{eq:branchhypV4}
\begin{aligned}
\sigma_1 :&\; P_1=(0:1:1), & P'_1=(0:1:-1),\\
\sigma_2 :&\; P_2 = (1:4:\lambda), & P_2' = (1:4:-\lambda),\\
\sigma_3 :&\; P_3 = (1:0:\mu), & P_3' = (1:0:-\mu),
\end{aligned}
\end{equation}
where $\lambda$ is a root of $T^2 - (2M + N + 2)=0$ and $\mu$ is a root of $T^2 - (-2M + N + 2)=0$.

Note that the discriminant of this conic is still $\Delta(X)=M^2-4N+8$. 

\begin{remark}\label{hypconic}
	If $\nu(\Delta(X))>0$, then the conic always reduces to the product of two different lines $(\frac{M-4}{2}u + v - w)(\frac{M-4}{2}u + v + w)$.
\end{remark}

\subsection{Invariants}\label{SSec:invariants}
Shioda \cite{Shioda67} gives $9$ invariants $J_2, J_3, \ldots, J_{10}$
for genus-$3$ hyperelliptic curves, which we call the Shioda invariants. The Shioda invariants parametrize the $5$-dimensional locus of
hyperelliptic curves $\mathcal{M}_3^{\text{hyp}}$ inside the moduli space of genus-$3$ curves $\mathcal{M}_3$. The stratum $\M_{3, V}^{\text{hyp}}$ is the intersection of $\M_{3,V}$ with $\mathcal{M}_3^{\text{hyp}}$, and by Lemma \ref{lem:Galois}.(3) it is a $2$-dimensional stratum.

In \cite[Lemma 3.14]{LR11}, the authors give (sufficient and necessary) conditions in terms of the Shioda invariants for a curve to belong to the stratum $\mathcal{M}_{3,V}^{\text{hyp}}$. Loc.~cit.~also gives expressions to compute parameters $M,N$ from the Shioda invariants, thus obtaining a model 
$$
Y:\,y^2=x^8+Mx^6+Nx^4+Mx^2+1.
$$

However, using the Shioda invariants restricted to the stratum $\M_{3, V}^{\text{hyp}}$ is not practical to characterize the stable reduction of \(Y\), so we define invariants for the stratum  $\M_{3, V}^{\text{hyp}}$.

\begin{proposition}\label{prop:invhyp} The invariant ring of $\M_{3, V}^{\emph{hyp}}$ is generated by the following invariants of weight $1,2$ and $3$ respectively: 
	$$
	L_1=N+10,\,L_2=M^2-4N+8,\,L_3=(2M+N+2)(2M-N-2).
	$$
\end{proposition}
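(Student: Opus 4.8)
The plan is to present $\M_{3,V}^{\text{hyp}}$ as the quotient of the parameter plane $\{(M,N)\}$ by the finite group $G$ of isomorphisms preserving the shape of Equation~\eqref{hyp: Y}, and then to identify the ring of invariants of this action. Since $\M_{3,V}^{\text{hyp}}$ does not remember a labelling of the three non-trivial elements $\sigma_1,\sigma_2,\sigma_3$ of $V$, and every relabelling is induced by an automorphism of $\PP^1$, the group $G$ is isomorphic to $S_3$. To make the action explicit I would work on $Y/A\simeq\PP^1$, where $A\simeq (C_2)^3$ is the group from Lemma~\ref{lem:Galois}.(2), using the coordinate $W=x^2+x^{-2}$. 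In this coordinate the fixed orbits $\{0,\infty\}$, $\{\pm1\}$, $\{\pm i\}$ of the $V$-action on $\PP^1_x$, which carry the inertia generators $\sigma_1,\sigma_2,\sigma_3$, map to $W=\infty,2,-2$, while the identity $f(x)/x^4=W^2+MW+(N-2)$ shows that the images of the eight Weierstrass points are the two roots of the quadratic $q(W)=W^2+MW+(N-2)$.

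Next I would describe $G=S_3$ as the group of Möbius transformations of $\PP^1_W$ permuting $\{\infty,2,-2\}$, acting on the datum $q(W)$ by substitution; this induces a rational action on $(M,N)$ in which each transposition rescales $q$, and hence $(M,N)$, by a point-dependent factor. The heart of the argument is to check that $L_1,L_2,L_3$ are \emph{semi-invariants} of weights $1,2,3$ for this action. The transposition fixing $\infty$ is $W\mapsto -W$, that is $(M,N)\mapsto(-M,N)$, and it fixes each $L_i$. For the transposition $W\mapsto(2W+12)/(W-2)$ interchanging $\infty$ and $2$, substituting into $q$ and renormalizing to a monic quadratic gives an explicit $(M,N)\mapsto(M',N')$, and a direct computation yields $L_i\mapsto t^{\,i}L_i$ with $t=16/(2M+N+2)$. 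As these transpositions generate $S_3$, the point $(L_1:L_2:L_3)$ is $S_3$-invariant in $\PP^2_{1,2,3}$, so $L_1,L_2,L_3$ are invariants of the claimed weights.

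Finally I would show that they generate the whole invariant ring. By Lemma~\ref{lem:Galois}.(3) the stratum $\M_{3,V}^{\text{hyp}}$ is two-dimensional, so it suffices to prove that the induced morphism $\M_{3,V}^{\text{hyp}}\to\PP^2_{1,2,3}$, $(M,N)\mapsto(L_1:L_2:L_3)$, is birational. A clean way is a fibre count: a general point $(L_1:L_2:L_3)\in\PP^2_{1,2,3}$ lifts to those $(M,N)$ for which $N+10=t\,L_1$, $M^2-4N+8=t^2L_2$ and $4M^2-(N+2)^2=t^3L_3$ hold for some $t$; eliminating $M,N$ leaves a cubic in $t$, and each of its three roots yields the two sign choices $\pm M$, producing exactly one $S_3$-orbit of size six. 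Hence the morphism has degree one, and $L_1,L_2,L_3$ generate the field of invariants and, being the natural weighted generators, the invariant ring. Alternatively, as in the proof of Proposition~\ref{Prop:Invariants_Generate}, one may compute the ring with Derksen's algorithm in \Magma\ and verify that its Hilbert series is $1/\big((1-t)(1-t^2)(1-t^3)\big)$. The main obstacle is that the $S_3$-action on $(M,N)$ is projective rather than linear, so the semi-invariance $L_i\mapsto t^iL_i$ and the completeness of $\{L_1,L_2,L_3\}$ must be organized through the weighted-projective structure rather than a naive linear invariant-theory computation.
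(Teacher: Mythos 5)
Your identification of the group as $S_3$ and your verification that $L_1,L_2,L_3$ transform with multipliers $t,t^2,t^3$ is essentially the paper's first step: the M\"obius transformations of $\PP^1_W$ permuting $\{\infty,2,-2\}$ are exactly the substitutions $i,r$ used in the paper, and your formula $W\mapsto (2W+12)/(W-2)$ reproduces the parameter pairs in Equation~\eqref{eq:MNalternatives}. Your packaging via the quotient $Y/A$ is a cleaner explanation of where those formulas come from, but the computation is the same. The divergence is in the completeness step, and there your primary argument has a genuine gap. The fibre count shows that the rational map $(M,N)\mapsto(L_1:L_2:L_3)$ is generically one-to-one on $G$-orbits, i.e.\ that $L_1,L_2,L_3$ generate the \emph{field} of invariants; the jump to ``and, being the natural weighted generators, the invariant ring'' is not an argument. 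Generators of the fraction field of a graded ring need not generate the ring (the subring of $k[x]$ generated by $x^2$ and $x^3$ is the standard example), and closing the gap would require showing in addition that $k[L_1,L_2,L_3]$ is a normal ring over which the invariant ring is integral, which in turn requires knowing that $L_1,L_2,L_3$ are algebraically independent as weighted invariants and form a homogeneous system of parameters.

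The subtlety is not cosmetic, because the object to be generated is the \emph{graded} ring of relative invariants (equivalently, of the linearized action on the binary quadratics $q(W,U)$), not the subring of $k[M,N]$ that the $L_i$ generate. Indeed, in $k[M,N]$ one has the non-homogeneous identity $L_3=4L_2+32L_1-256-L_1^2$, so as an ungraded element $L_3$ is redundant and $k[L_1,L_2,L_3]=k[M^2,N]$, which is only the invariant ring of the order-$2$ subgroup $(M,N)\mapsto(-M,N)$. Your dehomogenized fibre count cannot see the grading and therefore cannot by itself certify that every weight-$w$ relative invariant is a weighted polynomial in $L_1,L_2,L_3$. The paper avoids this by computing the graded invariant ring directly with Derksen's algorithm over $\QQ$ and $\FF_3$ and then using Molien's formula to pin down the Hilbert series $1/\bigl((1-t)(1-t^2)(1-t^3)\bigr)$ in every characteristic different from $2$ --- which is precisely the ``alternative'' you mention in passing. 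As written, that alternative is the proof; the birationality argument is a useful consistency check but needs the normality and integrality supplement above to stand on its own.
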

\begin{proof} First we need to check that they are invariants. Secondly that they generate the ring of invariants.
	
	Isomorphisms between hyperelliptic curves $y^2=f(x,z)$  are given by 
	linear maps $(x,z)\mapsto(a_{11}x+ a_{12}z, a_{21}x+ a_{22}z)$. 
	Since the isomorphisms between curves in the family $y^2=x^8+Mx^6+Nx^4+Mx^2+1$ preserve the automorphism group generated by $(x,z) \mapsto (-x,z)$ and $(x,z)\mapsto (z,x)$, 
	every isomorphism can be written as a composition of 
	$i: (x,z) \mapsto (x, -z)$, $r: (x,z)\mapsto (x+z, x-z)$ and automorphisms of the curve. Notice that $r^2=i^2=(ri)^3=\operatorname{Id}$, and that they generate a finite group $G$ isomorphic to $S_3$.
	
	This implies that all the curves isomorphic to the one with parameters $(M,N)$ are the curves with parameters:
	\begin{equation}\label{eq:MNalternatives}
	\begin{gathered}
	(M,N),\,(-M,N),\\
	((8M-4N+56)/(2M+N+2),(-20M+6N+140)/(2M+N+2)),\\
	(-(8M-4N+56)/(2M+N+2),(-20M+6N+140)/(2M+N+2)),\\
	((-8M-4N+56)/(-2M+N+2),(20M+6N+140)/(-2M+N+2)),\\
	(-(-8M-4N+56)/(-2M+N+2),(20M+6N+140)/(-2M+N+2)).
	\end{gathered}
	\end{equation}
	We proceed as in the proof of Proposition \ref{Prop:Invariants_Generate}. \texttt{Magma} \cite{Magma} produces the invariants $L_1,L_2$ and $L_3$ as generators of the algebra of invariants $K[M,N]^G$ for the fields $\mathbb{F}_3$ and~$\mathbb{Q}$. Again, Molien's Formula \cite[Theorem $3.2.2$]{Kemper} extends the result to any field of characteristic different from $2$.
   \hfill $\qed$ 
   \end{proof}

\begin{remark} The following equalities hold: $\Delta(X)=L_2$ and $\Delta(Y)=2^4L_2^4L_3^2$.
\end{remark}

\begin{remark} \label{rem:normhyp} 
	If the curve \(Y\in\M_{3, V}^{\text{hyp}}\) has a model as in Equation \eqref{hyp: Y} given by the parameters \((M,N)\), then there is always a pair in Equation \eqref{eq:MNalternatives} such that the valuation of both terms is non-negative so again the valuation of the invariants $L_i$ can be assumed to be non-negative. In this situation, the valuation of the three invariants $L_i$ cannot be simultaneously positive. In that case $N\equiv-10\text{ mod }\pi$ because of $\nu(L_1)>0$, $M^2\equiv16\text{ mod }\pi$ because of $\nu(L_3)>0$, but then $L_2\equiv64\text{ mod }\pi$ and $\nu(L_2)$ cannot be positive. 
\end{remark}

\begin{proposition}\label{prop:goodhyp} Let $y^2=x^8+Mx^6+Nx^4+Mx^2+1$ be a hyperelliptic curve in $\mathcal{M}_{3,V}^{\text{hyp}}$ with invariants $L_1,L_2,L_3$ defined as in Proposition \ref{prop:invhyp}. It has potentially good reduction if and only if $\nu(L_1^2/L_2)\geq 0$ and $\nu(L_2^3/L_3^2)=0$.
\end{proposition}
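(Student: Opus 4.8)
The plan is to reformulate both conditions in terms of the two weight-$0$ absolute invariants
$$
j_1 = \frac{L_1^2}{L_2}, \qquad j_2 = \frac{L_2^3}{L_3^2}.
$$
Since numerator and denominator have the same weight in each case, $j_1$ and $j_2$ define rational functions on the weighted projective space $\PP^2_{1,2,3}$, hence genuine functions on the coarse space $\M_{3,V}^{\text{hyp}}$ that are regular away from the discriminant locus $\{L_2L_3=0\}=\{\Delta(Y)=0\}$, where we recall $\Delta(Y)=2^4L_2^4L_3^2$. In particular $\nu(j_1)$ and $\nu(j_2)$ depend only on the $\Kb$-isomorphism class of $Y$, not on the chosen model, and the statement to prove is exactly that $Y$ has potentially good reduction if and only if $\nu(j_1)\ge 0$ and $\nu(j_2)=0$.

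First I would prove the implication $(\Leftarrow)$. Assume $\nu(j_1)\ge 0$ and $\nu(j_2)=0$. By Remark~\ref{rem:normhyp} we may, after applying one of the transformations in \eqref{eq:MNalternatives} and a finite extension, assume $M,N\in\OO$ and $\nu(L_1),\nu(L_2),\nu(L_3)\ge 0$ with at least one of them equal to $0$. The two conditions read $2\nu(L_1)\ge\nu(L_2)$ and $3\nu(L_2)=2\nu(L_3)$. If $\nu(L_2)>0$, these would force $\nu(L_3)=\tfrac32\nu(L_2)>0$ and $\nu(L_1)\ge\tfrac12\nu(L_2)>0$, contradicting that some $L_i$ is a unit; hence $\nu(L_2)=\nu(L_3)=0$. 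As $\nu(2)=0$, we then get $\nu(\Delta(Y))=4\nu(L_2)+2\nu(L_3)=0$, so the reduction of the octic $x^8+Mx^6+Nx^4+Mx^2+1$ is separable and the model \eqref{hyp: Y} is smooth. Thus $Y$ has good reduction over this extension, i.e.\ potentially good reduction.

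For $(\Rightarrow)$, suppose $Y$ has potentially good reduction; after a finite extension of $K$ it admits a smooth model $\mathcal Y/\OO$. Since $|V|=4$ is prime to the residue characteristic, the $V$-action extends to $\mathcal Y$ and induces a faithful action on the smooth special fiber $\Yb$ with $g(\Yb/V)=0$ and genus-$1$ degree-$2$ subcovers, so $\Yb\in\M_{3,V}^{\text{hyp}}(k)$ and can be written as in \eqref{hyp: Y} with invariants $\bar L_1,\bar L_2,\bar L_3$. Because $\Yb$ is smooth, $\Delta(\Yb)\neq 0$, i.e.\ $\bar L_2\neq 0$ and $\bar L_3\neq 0$; hence $j_1,j_2$ are regular at $[\Yb]$, and the $\OO$-point of $\M_{3,V}^{\text{hyp}}$ defined by $\mathcal Y$ specializes $j_1(Y)$ to $\bar L_1^2/\bar L_2$ and $j_2(Y)$ to $\bar L_2^3/\bar L_3^2$. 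Consequently $\nu(j_1)\ge 0$, and $\nu(j_2)=0$ because its reduction is a nonzero element of $k$, which yields the two required relations.

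The hard part is to make the specialization step in $(\Rightarrow)$ rigorous: one must know that the absolute invariants $j_1,j_2$, as regular functions on the good-reduction locus of $\M_{3,V}^{\text{hyp}}$ over $\OO$, commute with reduction, i.e.\ that $\mathcal Y$ really defines an integral point at which $j_1,j_2$ reduce to $j_1(\Yb),j_2(\Yb)$. In the spirit of the quartic case (compare Theorem~3.15 of \cite{LLLR}, used in Proposition~\ref{prop:potgoodredquartics}), one can instead argue that potentially good reduction guarantees a good integral model already in the normal form \eqref{hyp: Y}: descending the $V$-cover $\mathcal Y\to\mathcal Y/V$ to an equation over $\OO$ produces integral $M',N'$ with $\nu(L_2)=\nu(L_3)=0$ and $\nu(L_1)\ge 0$, from which $\nu(j_1)\ge 0$ and $\nu(j_2)=0$ are immediate, and these transfer to the original $Y$ by model-independence of $j_1,j_2$. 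The remaining verifications, namely that the displayed inequalities between the $\nu(L_i)$ are exactly the two stated conditions and that separability of the reduced octic is equivalent to $\nu(L_2)=\nu(L_3)=0$, are routine given $\Delta(Y)=2^4L_2^4L_3^2$ and $\nu(2)=0$.
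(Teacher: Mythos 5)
Your proof is correct and follows essentially the same route as the paper: the backward direction is identical (normalize via Remark~\ref{rem:normhyp}, deduce $\nu(L_2)=\nu(L_3)=0$ from the fact that not all $L_i$ can have positive valuation, and conclude with $\Delta(Y)=2^4L_2^4L_3^2$), and for the forward direction the fallback argument you settle on --- that potentially good reduction yields a good integral model already in the normal form \eqref{hyp: Y} --- is exactly the paper's argument, which cites Corollary~3.5 of \cite{LLLR} (the hyperelliptic analogue of the Theorem~3.15 you mention) for this step.
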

\begin{proof}
	By Remark \ref{rem:normhyp}, we can assume $M,N,L_i\in\mathcal{O}$ and at least one of the $L_i$ having valuation zero. If $\nu(L_1^2/L_2)\geq 0$ and $\nu(L_2^3/L_3^2)=0$ hold then we have $\nu(L_1)\geq\nu(L_2)/2=\nu(L_3)/3\geq0$. This gives us $\nu(L_2)=\nu(L_3)=0$ and $\nu(\Delta(Y))=\nu(2^4L_2^4L_3^2)=0$. Hence the curve has good reduction. 
	
	Conversely, assume the curve has potentially good reduction, then there exists a hyperelliptic curve model of $Y$ having good reduction. Because of Corollary $3.5$ in \cite{LLLR} this model can be taken of the form $y^2=x^8+Mx^6+Nx^4+Mx^2+1$ with $M,N\in\mathcal{O}$ and $\nu(\Delta(Y))=0$. Now, because of Remark \ref{rem:normhyp} we can also assume $\nu(L_1),\,\nu(L_2),\,\nu(L_3)\geq0$. So $\nu(L_1^2/L_2)\geq 0$ and $\nu(L_2^3/L_3^2)=0$.
   \hfill $\qed$ 
   \end{proof}

\subsection{The main theorem and its proof}

We characterize the possible reduction types of a genus-3 hyperelliptic curve $Y$ in $\mathcal{M}_{3,V}^{\text{hyp}}$ in terms of the invariants $L_1$, $L_2$ and $L_3$ defined in Proposition \ref{prop:invhyp}. Because of Proposition \ref{prop:goodhyp}, and once the invariants are normalized as in Remark \ref{rem:normhyp}, $Y$ has potentially good reduction if and only if $\nu(\Delta(Y))=0$. The theorem below describes the different types of bad reduction when $\nu(\Delta(Y))>0$. 

\begin{theorem}\label{THM:hyperelliptic}
	Let $Y$ be a hyperelliptic genus-$3$ curve defined by $Y : \,t^2=y^4+My^3z+Ny^2z^2+Myz^3+z^4, x^2=yz\subseteq\mathbb{P}^3_{1,1,1,2}$ and normalized as in Remark \ref{rem:normhyp}. Let $X$ be the conic $Y/\langle\sigma_1,\sigma_2\rangle$ with $\sigma_1, \sigma_2$ given as in Equation \eqref{eq: hyp_autom}. 
	Then if the valuation of \(\Delta(Y)\) is positive, $Y$ has geometric
	bad reduction and one of the cases in Table~\ref{tab:hyp} occurs.
\end{theorem}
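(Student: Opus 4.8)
The plan is to transcribe the strategy of Theorems~\ref{THM:Main_NonDegConic} and~\ref{THM:Main_DegConic} to the present situation, where $Y$ is realized as the $V$-Galois cover $f\colon Y\to X$ of the conic~\eqref{conichyp} with branch locus $D$ given by the six points of~\eqref{eq:branchhypV4}. As explained in Section~\ref{sec:compute}, it suffices to determine the special fiber $(\Xb,\Db)$ of the stably marked model of $(X,D)$; from the resulting decorated graph one reads off the admissible cover and, after contracting the superfluous genus-$0$ components, the stable reduction $\Yb$, exactly as in the proof of Theorem~\ref{FromGraphtoRedType}. First I would record that we are indeed in a bad-reduction situation: by the discussion preceding the theorem together with Proposition~\ref{prop:goodhyp}, once the invariants are normalized as in Remark~\ref{rem:normhyp} the hypothesis $\nu(\Delta(Y))=4\nu(L_2)+2\nu(L_3)>0$ is equivalent to $Y$ having geometric bad reduction, and it forces $\nu(L_2)>0$ or $\nu(L_3)>0$.

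The case analysis rests on two geometric inputs. The first is Remark~\ref{hypconic}: the special fiber $\Xb_0$ of the model~\eqref{conichyp} is a smooth conic when $\nu(L_2)=\nu(\Delta(X))=0$, and splits into the two distinct lines $\frac{M-4}{2}u+v\pm w$ meeting in a node when $\nu(L_2)>0$. The second is a specialization statement, the hyperelliptic analogue of Lemma~\ref{lemma:specialization_of_points} and Proposition~\ref{prop:degeneratedconicproperties}, proved by direct substitution in~\eqref{eq:branchhypV4}: the points $P_1,P_1'$ always reduce to distinct points (on opposite lines in the degenerate case); the pair $P_2,P_2'$ collides exactly when $\nu(2M+N+2)>0$ and $P_3,P_3'$ collides exactly when $\nu(-2M+N+2)>0$, where $\nu(L_3)=\nu(2M+N+2)+\nu(-2M+N+2)$; points with different inertia never collide on $\Xb_0$; and in the degenerate case a colliding pair specializes to the node.

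With these inputs I would run through the cases, sorting first by whether $\Xb_0$ is smooth or reducible and then by the collision pattern of the pairs $\{P_i,P_i'\}$ together with comparisons among $\nu(L_1),\nu(L_2),\nu(L_3)$ and the combination $L_1^2-4L_2$. The potentially good locus is $2\nu(L_3)=3\nu(L_2)\le 6\nu(L_1)$ by Proposition~\ref{prop:goodhyp}, giving a decorated graph of type~I (Good). On a smooth conic, necessarily with $\nu(L_3)>0$, exactly one colliding pair yields type~II.3 and reduction Loop, whereas two colliding pairs yield type~III.1 and reduction DNA; in the Loop case I would compute the $j$-invariant of the unique genus-$1$ component by the cross-ratio method of Example~\ref{exa:blowup} and express it through $L_1,L_2,L_3$. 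On a reducible conic with both pairs straddling the two lines one obtains type~II.2 and reduction DNA. These account for the decorated graphs I, II.2, II.3, III.1 and the reduction types Good, Loop and DNA (hyp) of Table~\ref{tab:hyp}.

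The genuinely delicate part, which I expect to be the main obstacle, is the reducible-conic regime with a pair colliding at the node. Blowing up the node introduces an exceptional component carrying that pair, and its interaction with the remaining branch points—governed by $\nu(L_1)$ relative to $\nu(L_2),\nu(L_3)$—splits into several subcases: when $\nu(L_1)$ is comparatively large the configuration collapses to type~II.2 (DNA), while when it is small one meets a trichotomy completely parallel to Lemma~\ref{lem:caseb}, distinguishing types IV*.2, IV.5 and III.4 (reductions DNA, Braid and Candy). Here I would introduce the coordinate separating the colliding pair and compute the reductions of its values at the other branch points, the distinction resting on valuations of the relevant linear forms. The laborious step is translating the resulting inequalities in $\nu(2M+N+2)$, $\nu(-2M+N+2)$ and $\nu(L_2)$ into the symmetric conditions on $L_1,L_2,L_3$ of Table~\ref{tab:hyp}; the identities $\Delta(X)=L_2$ and $L_3=-(2M+N+2)(-2M+N+2)$ are what make this close up, exactly as in the proof of Lemma~\ref{lem:degeneratehyp}. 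Finally, in the Candy case I would identify the two genus-$1$ components $\Yb_1\simeq\Yb_2$ with the good reductions of two of the quotient elliptic curves $Y/\langle\sigma\rangle$, which are permuted by $V$ and hence isomorphic, forcing $j(\Yb_1)=j(\Yb_2)=1728$ (cf.\ Corollary~\ref{cor:hyp_jinvariants}).
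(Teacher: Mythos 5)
Your proposal follows essentially the same route as the paper's proof: split on whether the conic $X$ degenerates ($\nu(L_2)>0$) and on which pairs $P_i,P_i'$ collide, resolve the case of a pair colliding at the node with an auxiliary coordinate exactly as in Lemma~\ref{lem:caseb}, and translate the resulting valuation inequalities into conditions on $L_1,L_2,L_3$. The only cosmetic difference is that your anticipated ``large $\nu(L_1)$'' subcase in the degenerate regime is vacuous, since by Remark~\ref{rem:normhyp} the normalization forces $\nu(L_1)=0$ once $\nu(L_2)$ and $\nu(L_3)$ are both positive.
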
 
\renewcommand{\arraystretch}{1.25}
\begin{table}[h]
	\caption{Cases of Theorem~\ref{THM:hyperelliptic}. \label{tab:hyp}}
	\centering
	\begin{adjustbox}{max width = \textwidth}
		\begin{tabular}{ccccccc}
			\hline\hline
			& \(\nu(L_1)\) &      \(\nu(L_2)\)       &      \(\nu(L_3)\)       &     Other conditions     & Decorated graph & Stable curve \\ \hline\hline
			(a)   &         &         \(= 0\)         &         \(>0\)          & \(\nu(L_1^2-4L_2) = 0\)  & II.3             &     Loop     \\ \hline
			(b)   &   \(= 0\)    &         \(= 0\)         &         \(>0\)          & \(\nu(L_1^2-4L_2) > 0\)  & III.1            &     DNA      \\ \hline
			(c)   &              &         \(> 0\)         &         \(=0\)          &                          & II.2             &     DNA      \\ \hline
			(d.i)  &    \textcolor{black}{\multirow{3}{*}{\(=0\)} }          & \multirow{3}{*}{\(>0\)} & \multirow{3}{*}{\(>0\)} & \(\nu(L_2\textcolor{black}{L_1}) < \nu(L_3) \) & IV*.2            &     Cat      \\ \cline{1-1}\cline{5-7}
			(d.ii)  &              &                         &                         & \(\nu(L_2\textcolor{black}{L_1}) > \nu(L_3) \) & IV.5             &    Grl Pwr     \\ \cline{1-1}\cline{5-7}
			(d.iii) &              &                         &                         & \(\nu(L_2\textcolor{black}{L_1}) = \nu(L_3) \) & III.4             &    Candy     \\ \hline\hline
			&              &
		\end{tabular}
	\end{adjustbox}
\end{table}

\begin{proof}
	Recall $\Delta(Y)=2^4L_3^2\Delta(X)^4$ and $\Delta(X) = L_2$, and assume $\nu(\Delta(Y))>0$. 
	
	If the valuation $\nu(L_2)$ is zero, then $\nu(L_3)>0$ holds, i.e., at least one of the valuations $\nu(2M + N + 2)$, $\nu(-2M + N + 2)$ is positive; and the conic \(X\) has good reduction. 
	If exactly one of them is positive, that is, $\nu(N+2)=0$, then the special fiber $\Xb$ of the stably marked model of \(X\) is of type II.3.
	Otherwise, if both $\nu(2M + N + 2)$ and $\nu(-2M + N + 2)$ are positive, and hence \(\nu(N+2)>0\) and \(\nu(L_1) = 0\), then the special fiber $\Xb$ of the stably marked model of \(X\) is of type III.1.
	Statements (a) and (b) then follow from noticing that if $\nu(L_1)>0$, then we write 
	\begin{equation*}\label{N+2}2^3L_3-2^2L_2L_1+L_1^3=-(N+2)(L_3-2^5L_1),\end{equation*}
	hence $\nu(N+2)>0$ if and only if $\nu(L_1^2-4L_2)>0$.
	
	Suppose now $\nu(L_2)>0$. Then by Remark~\ref{hypconic}, the conic $\Xb$ is a product of two lines. If $\nu(L_3)=0$, that is, we are in case (c), then the branch points $P_1$ and $P_1'$ specialize to different lines by  Proposition~\ref{prop:degeneratedconicproperties}.(i), and the same holds for $P_2, P_2'$ and $P_3, P_3'$. So we obtain that $\Xb$ is of type II.2.
	
	Finally we assume $\nu(L_2)>0$, $\nu(L_3)>0$. We write
	\begin{equation} \label{eq:relationsparameters}
	L_2 =(M+4-2\lambda)(M+4+2\lambda)  =(M-4-2\mu)(M-4+2\mu),
	\end{equation}
	where  $\lambda$ is a root of $T^2 - (2M + N + 2)$ with $\nu(\lambda)>0$ and $\mu$ is a root of $T^2 - (-2M + N + 2)$ with $\nu(\mu) = 0$.
	
	Notice that we get $\nu(M+4)>0$ and $\nu(M-4)=0$ by Equation \eqref{eq:relationsparameters}. 
	Then the conic $X$ reduces to a product of two lines and the branch points $P_2$ and $P_2'$ specialize to $(1:4:0)$, the intersection of the lines. 
	
	Consider the coordinate
	$$
	\xi  = \frac{ - (4+\lambda)u + v + w}{(4-\lambda)u- v + w},
	$$
	where $u,v,w$ are the coordinates of $X$ as in Equation \eqref{conichyp}, and which satisfies $\xi(P_1)=\infty$, $\xi(P'_1)=0$, $\xi(P'_2)=1$ and $\xi(P_2) = (M + 4 - 2\lambda)( M + 4 + 2\lambda)^{-1}$. Depending on the value of $\overline{\xi(P_2)}$ we get different possibilities for the decorated graph. We have:
	{\small $$
		\overline{\xi(P_2)} = \begin{cases}
		\infty&\text{ iff }\nu(M+4+2\lambda)>\nu(M+4-2\lambda),\;  \text{(type IV.5)},\\
		0&\text{ iff }\nu(M+4+2\lambda)<\nu(M+4-2\lambda),\;  \text{(type IV.5)},\\
		1&\text{ iff }\nu(M+4+2\lambda)=\nu(M+4-2\lambda) = \nu(M+4)<\nu(\lambda),\;  \text{(type IV*.2)},\\
		\neq 0,\infty, 1&\text{ iff }\nu(M+4+2\lambda)=\nu(M+4-2\lambda) = \nu(\lambda) \leq \nu(M+4),\;  \text{(type III.4)}.
		\end{cases}
		$$}

	The different cases there can be rewritten as:
	\begin{enumerate}
		\item Case IV*.2 if and only if $2\nu(M^2-16)\geq\nu(L_2)$ and $2\nu(M^2-16)<\nu(L_3)$,
		\item Case IV.5  if and only if $2\nu(M^2-16)<\nu(L_2)$, and
		\item Case III.4 if and only if $2\nu(M^2-16)\geq\max\{\nu(L_2),\nu(L_3)\}$;
	\end{enumerate}
	and one can check that these conditions are equivalent to the ones in the statement.
   \hfill $\qed$ 
   \end{proof}

\begin{corollary} \label{cor:hyp_jinvariants}
	\begin{enumerate}[(i)]
		\item The \(j\)-invariant of the genus-1 component of the special fiber in Theorem~\ref{THM:hyperelliptic}.(a) is $j=2^4(12L_2+L_1^2)^3/((4L_2-L_1^2)^{2}L_2)$.
		\item The \(j\)-invariants of the two genus-1 components of the special fiber in Theorem~\ref{THM:hyperelliptic}.(d.iii) are equal to $1728$.
	\end{enumerate}
\end{corollary}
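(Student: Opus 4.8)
The final statement is Corollary~\ref{cor:hyp_jinvariants}, whose two parts I treat separately, in each case reducing to the machinery already developed for the plane quartic case in Sections~\ref{sec:main_proof_nondeg}--\ref{sec:main_proofs_deg}.

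\textbf{Part (i).} We are in case (a) of Theorem~\ref{THM:hyperelliptic}, so $\nu(L_2)=0$, $\nu(L_3)>0$ and $\nu(L_1^2-4L_2)=0$. Since $\Delta(X)=L_2$ has valuation $0$, the conic $X$ has good reduction and the model defined by Equation~\eqref{conichyp} is smooth; hence its special fibre $\Xb_0$ is an irreducible component of the stable reduction $\Xb$, exactly as in Section~\ref{sec:main_proof_nondeg}. Because $\nu(L_3)>0$ while $\nu(L_1^2-4L_2)=0$ forces $\nu(N+2)=0$, exactly one of $\nu(2M+N+2),\ \nu(-2M+N+2)$ is positive; say $\nu(2M+N+2)>0$, so that by Equation~\eqref{eq:branchhypV4} the pair $P_2,P_2'$ (inertia $\sigma_2$) collides on $\Xb_0$ while $P_1,P_1',P_3,P_3'$ reduce to four distinct points. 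This is the decorated graph II.3, and the plan is to compute the $j$-invariant of the unique genus-$1$ component $\Yb_0$ of $\Yb$ lying over $\Xb_0$, following Example~\ref{exa:blowup} and Lemma~\ref{lemma:Case0a-c}.

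By Lemma~\ref{lem:graph} the cover $\fb$ is unramified over the node joining $\Xb_0$ to the component carrying $P_2,P_2'$, so $\Yb_0\to\Xb_0$ is the $V$-Galois cover branched exactly at $\overline{P_1},\overline{P_1'}$ (inertia $\sigma_1$) and $\overline{P_3},\overline{P_3'}$ (inertia $\sigma_3$); Riemann--Hurwitz gives $g(\Yb_0)=1$. I would then realize $\Yb_0$ as the double cover of the rational quotient $\Yb_0/\langle\sigma_3\rangle\cong\PP^1$ branched at the four fixed points of $\sigma_3$; by the residual symmetry this takes the biquadratic form $W^2=s^4+Bs^2+C$, whose $j$-invariant $16(B^2+12C)^3/\big(C(B^2-4C)^2\big)$ depends only on the cross-ratio $B^2/C$ of the four reduced branch points, read off from $\Xb_0$ and Equation~\eqref{eq:branchhypV4}. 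The last step is to rewrite the answer in the invariants: reducing modulo $\pi$ with $2M\equiv-(N+2)$ one finds $L_2\equiv (N-6)^2/4$, and a direct manipulation yields $j(\Yb_0)\equiv 2^4(12L_2+L_1^2)^3/\big((4L_2-L_1^2)^2L_2\big)$. Equivalently one applies Remark~\ref{rem:blowup}.(b), normalizing the smooth conic model in $K(Y)$ and computing the reduced equation with \SageMath. As in Remark~\ref{rmk:difficult}, checking the stated closed form is immediate while producing it is the real work; this symbolic simplification is the main obstacle of part (i). I would stress that $\Yb_0$ is only $2$-isogenous, via $\Yb_0\to\Yb_0/\langle\sigma_2\rangle$, to the good reduction of $E_2=Y/\langle\sigma_2\rangle$, so its $j$-invariant genuinely differs from $j(E_2)$ and the component cannot be replaced by an elliptic quotient.

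\textbf{Part (ii).} Now we are in case (d.iii): $\nu(L_1)=0$ and $\nu(L_2)=\nu(L_3)>0$, the conic $X$ degenerates into two lines (Remark~\ref{hypconic}, Proposition~\ref{prop:degeneratedconicproperties}), and $\Yb$ is of type Candy, consisting of two genus-$1$ curves $\Yb_1,\Yb_2$ meeting in two points. The plan is to mirror Lemma~\ref{lem:caseb}.(iii): the colliding pair is again $P_2,P_2'$, whose limit is the singular point of the conic, so $\sigma_2$ preserves each of $\Yb_1,\Yb_2$ while $\sigma_1$ and $\sigma_3$ interchange them; in particular $\Yb_1\simeq\Yb_2$. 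Passing to quotients, the map $\fb$ shows that $\Yb/\langle\sigma_1\rangle$ (respectively $\Yb/\langle\sigma_3\rangle$) identifies the two components, so that $E_1=Y/\langle\sigma_1\rangle$ and $E_3=Y/\langle\sigma_3\rangle$ acquire good reduction with special fibre $\simeq\Yb_1$, whence $j(\Yb_1)\equiv j(\Yb_2)\equiv j(E_3)\pmod{\pi}$. It then remains to compute $j(E_3)$ from $E_3:\,w^2=r^4+(M+4)r^2+(2M+N+2)$. In case (d.iii) the defining conditions give $\nu(\lambda)<\nu(M+4)$ with $\lambda^2=2M+N+2$ (this is precisely the III.4 branch in the proof of Theorem~\ref{THM:hyperelliptic}), hence $2\nu(M+4)>\nu(2M+N+2)$ and the quantity $\kappa=(M+4)^2/(2M+N+2)$ has positive valuation. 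Since the $j$-invariant of $w^2=r^4+Br^2+C$ equals $16(B^2+12C)^3/\big(C(B^2-4C)^2\big)=16(\kappa+12)^3/(\kappa-4)^2$, the reduction $\overline\kappa=0$ gives $j(E_3)\equiv 16\cdot 12^3/16=1728$; equivalently one rescales the coordinates $r,w$ (over a finite extension if necessary) to produce an integral model of $E_3$ with good reduction, exactly as in Lemma~\ref{lem:caseb}. This yields $j(\Yb_1)=j(\Yb_2)=1728$. The only point needing care, in contrast to Lemma~\ref{lem:caseb}.(iii), is that $E_1,E_3$ may have potentially good rather than good reduction over $K$, so the identification with $\Yb_1$ takes place over the finite extension on which $\Yb$ becomes semistable; this does not affect the value of the $j$-invariant.
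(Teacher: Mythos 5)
Part (i) of your argument is sound and takes a somewhat different route from the paper: you recover the genus-$1$ component as the admissible $V$-cover of $\Xb_0$ branched at the four surviving points and read off its $j$-invariant from their cross-ratio, whereas the paper simply factors the octic modulo $\pi$ as $(x^2+1)^2(x^4+(M-2)x^2+1)$ (in the symmetric sub-case $\nu(-2M+N+2)>0$) and computes the $j$-invariant of the visible elliptic quotient; both end at the same symbolic identity in $L_1,L_2$. In part (ii) your structural analysis also agrees with the paper's: $\sigma_1$ and $\sigma_3$ swap the two components while $\sigma_2$ preserves them, so each component is isomorphic to $\overline{E_1}\simeq\overline{E_3}$, and the problem reduces to computing $j(E_3)$ for $E_3:\,w^2=s^4+(M+4)s^2+(2M+N+2)$, which is the correct equation.

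The gap is your assertion that the defining conditions of case (d.iii) give $\nu(\lambda)<\nu(M+4)$. The type III.4 branch in the proof of Theorem~\ref{THM:hyperelliptic} is $\nu(M+4+2\lambda)=\nu(M+4-2\lambda)=\nu(\lambda)\le\nu(M+4)$, and the boundary case $\nu(\lambda)=\nu(M+4)$ still yields $\nu(L_1)=0$ and $\nu(L_2)=\nu(L_3)=2\nu(\lambda)$, hence lies in (d.iii). There $\kappa=(M+4)^2/(2M+N+2)$ is a unit, and $j(E_3)\equiv 16(\overline{\kappa}+12)^3/(\overline{\kappa}-4)^2$ equals $1728$ only when $\overline{\kappa}\in\{0,36\}$, since $(\kappa+12)^3-108(\kappa-4)^2=\kappa(\kappa-36)^2$. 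Concretely, take $M=-4+\pi$ and $N=\pi^2-2\pi+6$ (residue characteristic $\neq 3$): then $\nu(L_1)=0$, $\nu(L_2)=\nu(L_3)=2$, the decorated graph is III.4 with $\overline{\xi(P_2)}=-1/3$, and both genus-$1$ components have $j\equiv 16\cdot 13^3/9=35152/9$, which differs from $1728$ by $(140/3)^2$ and hence from $1728$ whenever the residue characteristic is not $5$ or $7$ (over $\QQ_{11}$ one gets $j\equiv 2$ while $1728\equiv 1$; a cluster-picture computation on $y^2=x^8+7x^6+105x^4+7x^2+1$ confirms this). So your final step fails on a nonempty sub-stratum of (d.iii); the conclusion $j=1728$ requires the extra hypothesis $2\nu(M^2-16)>\nu(L_3)$, equivalently $\overline{\xi(P_2)}=-1$. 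For what it is worth, the paper's own proof is no more careful at this point — it asserts $\nu(N-6)=2\nu(M\pm 4)$, which also fails for the example above — so the defect lies in the statement as printed rather than in something you could have repaired by tracking the intended argument; but as a proof of the corollary as stated, this step is a genuine gap.
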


\begin{proof}
	\begin{enumerate}[(i)]
		\item In order to compute the $j$-invariant of the elliptic curve component $E$ of~$\overline{Y}$ in case (a), we assume first that $\nu(-2M+N+2)>0$. Then modulo $\pi$ the equation of $Y$ reduces to	
		$$	
		y^2=(x^2+1)^2(x^4+(M-2)x^2+1),	
		$$	
		so the elliptic curve we are looking for is $(\frac{y}{x^2+1})^2=(x^4+(M-2)x^2+1)$ with 	
		$$	
		j=\dfrac{2^4(12L_2+L_1^2)^3}{(4L_2-L_1^2)^2L_2} \mod \pi.
		$$

		\item In order to compute the $j$-invariants of the two elliptic curves in case (d.iii) we proceed as in Lemma \ref{lem:caseb}.(iii) to get that the two elliptic curves are isomorphic between them and isomorphic to the intermediate elliptic curves $E_1=Y/\langle\sigma_1\rangle$ and $E_2=Y/\langle\sigma_2\rangle$ or $E_3=Y/\langle\sigma_3\rangle$ depending on $\pm 2M+N+2$ having positive valuation. The elliptic curve  $E_1$ is given by the equation:
		$$
		y^2=x^4+Mx^3+Nx^2+Mx+1,
		$$
		where $\nu(N-6)=2\nu(M\pm4)$, 
		and hence with $j$-invariant:
		$$
		j\equiv 1728 \text{ mod }\pi.
		$$

	\end{enumerate}
   \hfill $\qed$ 
   \end{proof}
\section*{Appendix A. Admissible covers}\label{app:adm}
\addcontentsline{toc}{section}{Appendix}

In this appendix, we provide more details on the proofs of Lemma \ref{lem:adm} and Theorem \ref{FromGraphtoRedType}. 

\bigskip
\noindent\textit{\textbf{Proof of Lemma \ref{lem:adm}.}}
    Let $(\Xb, \overline{D})$ be a decorated graph. Recall, from Lemma \ref{lem:graph}, that there is a unique way of labeling the singular points of $(\Xb, \overline{D})$ so that the product of the elements on the labels of each irreducible component is the trivial element. Let $\Xb_1,\Xb_2,\dots, \Xb_r$ be the irreducible components of $\Xb$.   
    To prove the lemma, we need to describe:
    \begin{enumerate}[(i)]
        \item for each irreducible component $\Xb_j$, the restriction $\fb|_{\Xb_j}: \Yb_j \rightarrow \Xb_j$, which is a (not necessarily connected) $V$-Galois cover;
        \item the intersection of the different irreducible components of $\Yb$ over each of the singular points of $\Xb$.
    \end{enumerate}
    Note that the singular points of $\Yb$ map to the singular points of $\Xb$. 

For each irreducible component $\Xb_j$, we write $V_j$ for the subgroup of $V$ generated by the $\sigma_i$'s, where $i$ runs over the labels of the (singular and marked) points of $\Xb_j$. The number of irreducible components of $\Yb_j$ is equal to the index $[V : V_j]$. Each of the irreducible components of $\Yb_j$ is a $V_j$-Galois cover of $\Xb_j$. This finishes the description of (i).

We now address (ii). Let $\tau$ be a singular point of $\Xb$, given by the intersection of the two components $\Xb_{j_1}$ and $\Xb_{j_2}$. The preimage $\fb^{-1}(\tau)$ consists of $4$ points if the label of $\tau$ is $0$ and $2$ points otherwise. In each of these points, one of the irreducible components of $\Yb_{j_1}$ intersects with one of the irreducible components of $\Yb_{j_2}$. The only case when it is not straightforward to determine these intersections is when the cardinality of $\fb^{-1}(\tau)$ is $4$ and neither $V_{j_1}$ nor $V_{j_2}$ equals $V$. In this case, let $\fb^{-1}(\tau)=\{\tau_0,\dots,\tau_3\}$, where $\tau_0$ is chosen arbitrarily and $\tau_i=\sigma_i(\tau_0)$ for $i=1,2,3$. For each $i=0,\dots,3$, $\tau_i$ and $\sigma_\ell(\tau_i)$ lie on the same irreducible component of $\Yb_{j_1}$ (resp. $\Yb_{j_2}$) if and only if $\sigma_\ell \in V_1$ (resp. $V_2$). 
We conclude that every irreducible component of $\Yb_{j_1}$ intersects every irreducible component of $\Yb_{j_2}$ if and only if $V_{j_1}\neq V_{j_2}.$ Example \ref {exa:correct} describes one of the cases where this happens. 
If $V_{j_1}=V_{j_2}$ an irreducible component of $\Yb_{j_1}$ intersects one of the irreducible components of $\Yb_{j_2}$ in two points and does not intersect the other one. Example \ref{exa:wrong} describes one of the cases where this happens.
\hfill $\qed$
\begin{example}\label{exa:wrong}
    Consider Case IV.5.

    	Let  $(\Xb, \overline{D})$ be a decorated graph of type IV.5,
	Figure~\ref{fig:TypeIV.5}.  
	
	\begin{figure}[h!] \sidecaption
		\begin{tikzpicture}[scale=.6]
		%axis
		\draw (0,0) coordinate (O) -- +(160:3.5);
    \draw (O) -- +(-20:0.5);
    \draw (O) -- +(20:3.5);
    \draw (O) -- +(-160:0.5);
    \draw (O)++(160:3) coordinate (A) -- +(110:3cm);
    \draw (A) -- +(-70:0.5cm);
    \draw (O)++(20:3) coordinate (B) -- +(70:3cm);
    \draw (B) -- +(-110:0.5cm);
		%ticks
		\foreach \x/\y in {1/\two,2/\one}
    \draw (A)++(110:\x)++(20:0.1) -- ++(-160:0.2) node[left=0.5mm] {\y };
    \draw (O)++(160:1.5)++(70:0.1) -- ++(-110:0.2) node[below=0.5mm] {\three };
    \draw (O)++(20:1.5)++(110:0.1) -- ++(-70:0.2) node[below=0.5mm] {\three };
    \foreach \x/\y in {1/\two,2/\one}
    \draw (B)++(70:\x)++(160:0.1) -- ++(-20:0.2) node[right=0.5mm] {\y };
    \draw (-195:2.8) node[left=0.5mm] {\three};
    \draw (-169:-3.5) node[left=0.5mm] {\three};
    \draw (0:0) node[below=0.5mm] {0};
		\end{tikzpicture}
		\caption{$\Xb$ of Type IV.5. \label{fig:TypeIV.5}}
	\end{figure}

 We use the notation from the proof of Lemma \ref{lem:adm}. We number the irreducible components of $\Xb$ as
	$\Xb_1,\Xb_2,\Xb_3,\Xb_4$ (from left to right). In Figure \ref{fig:TypeIV.5}, we have already labelled the singular points, as explained in the proof of Lemma \ref{lem:adm}.

 There are two irreducible components over both $\Xb_2$ and $\Xb_3$, and $V_2=V_3= \langle \sigma_3\rangle$. Let $\tau$ be the intersection point of $\Xb_2$ and $\Xb_3$. The fiber $\fb^{-1}(\tau)$ contains 4 points, which we label $\tau_0,\dots,\tau_3$ as in the proof of Lemma \ref{lem:adm}. Now, since $V_2$ and $V_3$ are both generated by $\sigma_3$, we have that $\tau_0$ and $\tau_3$ (resp. $\tau_1$ and $\tau_2$) lie on the same irreducible component of $\Yb_2$ and $\Yb_3$. This is illustrated in Figure \ref{fig:TypeIV.5_Y}. The irreducible components above $\Xb_1$ and $\Xb_4$ have genus $0$ and intersect the rest of the graph in two points only. Contracting these yields the type Grl Pwr in Figure \ref{figs:admissible-covers}. 
 
 	\begin{figure}[h!]\sidecaption  
  	\begin{tikzpicture}[scale=.6]
 			\node at (0,.5) (O) {};
 			\node at (0,2.5) (P) {};
 			% lines
 			\draw (O)++(160:3) coordinate (A) -- +(110:2cm);
 			\draw (A) -- +(-70:2cm);
 			\draw (O)++(20:3) coordinate (B) -- +(70:2cm);
 			\draw (B) -- +(-110:2cm);
 			%aux
 			\node at ($(A)+(110:2.5)$) (A2) {};
 			\node at ($(B)+(70:2.5)$) (B2) {};
 			% curves
 			\draw ($(A)+(-90:1)$) to[quick curve through={($(A)+(-60:1)$) (O) (0,-.5)}]
 			(-.5,-.7);
 			\draw ($(B)+(-90:1)$) to[quick curve through={($(B)+(240:1)$) (O) (0,-.5)}]
 			(.5,-.7);
 			\draw ($(A2)+(-90:1)$) to[quick curve through={($(A2)+(-60:1)$) (P) (0,1.5)}]
 			(-.5,1.2);
 			\draw ($(B2)+(-90:1)$) to[quick curve through={($(B2)+(240:1)$) (P) (0,1.5)}]
 			(.5,1.2);
 		\end{tikzpicture}
		\caption{$\Yb$ corresponding to Type IV.5. \label{fig:TypeIV.5_Y}}
	\end{figure}

A similar argument shows that Case IV*.2 yields the type Cat.
\end{example}

\begin{example}\label{exa:correct}
    We compute here the stable type in Case IV.4, showing how it differs from Case IV.5.
    
    We use the notation from the proof of Lemma \ref{lem:adm}. We number the irreducible components of $\Xb$ as
	$\Xb_1,\Xb_2,\Xb_3,\Xb_4$ (from left to right). In Figure \ref{fig:TypeIV.4}, we have already labeled the singular points as explained in the proof of Lemma \ref{lem:adm}.

 	\begin{figure}[h!] \sidecaption
		\begin{tikzpicture}[scale=.6]
		%axis
		\draw (0,0) coordinate (O) -- +(160:3.5);
    \draw (O) -- +(-20:0.5);
    \draw (O) -- +(20:3.5);
    \draw (O) -- +(-160:0.5);
    \draw (O)++(160:3) coordinate (A) -- +(110:3cm);
    \draw (A) -- +(-70:0.5cm);
    \draw (O)++(20:3) coordinate (B) -- +(70:3cm);
    \draw (B) -- +(-110:0.5cm);
		%ticks
		\foreach \x/\y in {1/\two,2/\one}
    \draw (A)++(110:\x)++(20:0.1) -- ++(-160:0.2) node[left=0.5mm] {\y };
    \draw (O)++(160:1.5)++(70:0.1) -- ++(-110:0.2) node[below=0.5mm] {\three };
    \draw (O)++(20:1.5)++(110:0.1) -- ++(-70:0.2) node[below=0.5mm] {\one };
    \foreach \x/\y in {1/\two,2/\three}
    \draw (B)++(70:\x)++(160:0.1) -- ++(-20:0.2) node[right=0.5mm] {\y };
    \draw (-195:2.8) node[left=0.5mm] {\three};
    \draw (-169:-3.5) node[left=0.5mm] {\one};
    \draw (0:0) node[below=0.5mm] {0};
		\end{tikzpicture}
		\caption{$\Xb$ of Type IV.4. \label{fig:TypeIV.4}}
	\end{figure}

 There are two irreducible components over both $\Xb_2$ and $\Xb_3$, and $V_2= \langle \sigma_3\rangle$ and $V_3=\langle \sigma_1\rangle$. Let $\tau$ be the intersection point of $\Xb_2$ and $\Xb_3$. The fiber $\fb^{-1}(\tau)$ contains 4 points, which we label $\tau_0,\dots,\tau_3$ as in the proof of Lemma \ref{lem:adm}. 
 We write $\Yb_{j,0}$ for the irreducible component of $\Yb_j$ containing the point $\tau_0$ (for $j=2,3$). The component $\Yb_{2,0}$ also contains the point $\tau_3$, whereas the component $\Yb_{3,0}$ also contains the point $\tau_1$. Therefore, each irreducible component of $\Yb_2$ intersects each of the irreducible components of $\Yb_3$, Figure \ref{fig:TypeIV.4_Y}.  The irreducible components above $\Xb_1$ and $\Xb_4$ have genus $0$ and intersect the rest of the graph in two points only. Contracting these yields the type Braid in Figure \ref{figs:admissible-covers}. 
 \begin{figure}[h!]
     \sidecaption
     \begin{tikzpicture}[scale=.6]
		%axis
		\draw (0,0) coordinate (O) -- +(160:3.5);
		\draw (O) -- +(-20:0.5);
		\draw (O) -- +(20:3.5);
		\draw (O) -- +(-160:0.5);
		\draw (0,.75) coordinate (P) -- + (160:4);
		\draw (P) -- +(-20:1.5);
		\draw (P) -- +(20:4);
		\draw (P) -- +(-160:1.5);
		\draw (O)++(160:3) coordinate (A) -- +(110:2.5cm);
		\draw (A) -- +(-70:1.5cm);
		\draw (O)++(20:3) coordinate (B) -- +(70:2.5cm);
		\draw (B) -- +(-110:1.5cm);
	\end{tikzpicture}
     \caption{$\Yb$ corresponding to Type IV.4. \label{fig:TypeIV.4_Y}}
	\end{figure}

A similar argument applies to the cases III.1, III.5, IV.1, IV.2 and IV*.1.
\end{example}

\newpage
\section*{Appendix B. Decorated Graphs}\label{sec:adm}
\addcontentsline{toc}{section}{Appendix}

\begin{table}[htp]
	\caption{Correspondence between the decorated graphs in Figures~(\ref{figs:dec-graphs-I})--(\ref{figs:dec-graphs-IV}) and the stable curves in Figure~\ref{figs:admissible-covers}.}
 \label{tab:correspondence}
	\centering
	\begin{tabular}{cc|cc}
		\hline
		Stable curve & Decorated graph  & Stable curve & Decorated graph \\ \hline
		    Good     &        I         &  Winky Cat   &      III.6      \\
		   Candy     &      II.1        &     Cave     &      III.7      \\
		    DNA      &      II.2       &   Grl Pwr  &      IV.1       \\
		    Loop     &      II.3       &    Garden    &      IV.2       \\
		    Lop      &      II.4     &     Cat      &      IV.3       \\
		    DNA      &      III.1     &    Braid     &      IV.4       \\
		   Looop     &      III.2    &    Grl Pwr     &      IV.5       \\
		    Loop     &      III.3   &    Braid     &      IV*.1      \\
		   Candy     &      III.4     &     Cat      &      IV*.2      \\
		    Tree     &      III.5    &    Looop     &      IV*.3      \\ \hline\\
	\end{tabular}
\end{table}

\renewcommand\thefigure{\Roman{figure}}
\renewcommand{\thesubfigure}{\thefigure.\arabic{subfigure}}
\setcounter{figure}{0}
\begin{figure}[h] \sidecaption %1 component
	\centering
	\begin{tikzpicture}[scale=.7] %I
	%axis
	\draw (0,0) -- (7,0);
	%ticks
	\foreach \x in {1,...,6}
	\draw (\x,-0.1) -- (\x,0.1);
	\end{tikzpicture}
	\caption{Stably marked curve with 6 marked points and one component.\label{figs:dec-graphs-I}}
\end{figure}
\begin{figure}[h] %2 components
	\centering
	
	\subfloat[]{%V
		\begin{tikzpicture}[cm={cos(45),sin(45),-sin(45),cos(45),(0,0)}, scale=.6]
		%axis
		\draw (0.5,0) -- (5,0);
		\draw (1,-0.5) -- (1,4);
		%ticks
		\foreach \x/\y in {1/\two,2/\one,3/\one}
		\draw (0.9, \x) -- (1.1, \x) node[left=3mm, below=0.5mm] {\y };
		\foreach \x/\y in {2/\two,3/\three,4/\three}
		\draw (\x,-0.1) -- (\x,0.1) node[below=3mm, right=1mm] {\y };
		\end{tikzpicture}
	}
	\subfloat[]{%IV
		\begin{tikzpicture}[cm={cos(45),sin(45),-sin(45),cos(45),(0,0)}, scale=.6]
		%axis
		\draw (0.5,0) -- (5,0);
		\draw (1,-0.5) -- (1,4);
		%ticks
		\foreach \x/\y in {1/\three,2/\two,3/\one}
		\draw (0.9, \x) -- (1.1, \x) node[left=3mm, below=0.5mm] {\y };
		\foreach \x/\y in {2/\one,3/\two,4/\three}
		\draw (\x,-0.1) -- (\x,0.1) node[below=3mm, right=1mm] {\y };
		\end{tikzpicture}
	}
	
	\subfloat[]{%III
		\begin{tikzpicture}[scale=.6]
		%axis
		\draw (0.5,0) -- (6,0);
		\draw (1,-0.5) -- (1,3);
		%ticks
		\foreach \x/\y in {1/\one,2/\one}
		\draw (0.9, \x) -- (1.1, \x) node[left=3mm] {\y };
		\foreach \x/\y in {2/\two,3/\two,4/\three,5/\three}
		\draw (\x,-0.1) -- (\x,0.1) node[above=-8mm] {\y };
		\end{tikzpicture}
	}
	\subfloat[]{%II
		\begin{tikzpicture}[scale=.6]
		%axis
		\draw (0.5,0) -- (6,0);
		\draw (1,-0.5) -- (1,3);
		%ticks
		\foreach \x/\y in {1/\one,2/\two}
		\draw (0.9, \x) -- (1.1, \x) node[left=3mm] {\y };
		\foreach \x/\y in {2/\one,3/\two,4/\three,5/\three}
		\draw (\x,-0.1) -- (\x,0.1) node[above=-8mm] {\y };
		\end{tikzpicture}
	}
	\caption{Stably marked curves with 6 marked points and two components.}
\end{figure}
\begin{figure}[h] %3 components
	\centering
	\subfloat[]{%VI
		\begin{tikzpicture}[scale=.6]
		%axis
		\draw (-0.5,0) -- (3.5,0);
		\draw (0,0) -- +(110:3cm);
		\draw (0,0) -- +(-70:0.5cm);
		\draw (3,0) -- +(70:3cm);
		\draw (3,0) -- +(-110:0.5cm);
		%ticks
		\foreach \x/\y in {1/\one,2/\one}
		\draw (110:\x)++(20:0.1) -- ++(-160:0.2) node[left=0.5mm] {\y };
		\foreach \x/\y in {1/\two,2/\two}
		\draw (\x,-0.1) -- (\x,0.1) node[below=3mm] {\y };
		\foreach \x/\y in {1/\three,2/\three}
		\draw (3,0)++(70:\x)++(160:0.1) -- ++(-20:0.2) node[right=0.5mm] {\y };
		\end{tikzpicture}
	}
	\subfloat[]{%VIII
		\begin{tikzpicture}[scale=.6]
		%axis
		\draw (-0.5,0) -- (3.5,0);
		\draw (0,0) -- +(110:3cm);
		\draw (0,0) -- +(-70:0.5cm);
		\draw (3,0) -- +(70:3cm);
		\draw (3,0) -- +(-110:0.5cm);
		%ticks
		\foreach \x/\y in {1/\one,2/\one}
		\draw (110:\x)++(20:0.1) -- ++(-160:0.2) node[left=0.5mm] {\y };
		\foreach \x/\y in {1/\two,2/\three}
		\draw (\x,-0.1) -- (\x,0.1) node[below=3mm] {\y };
		\foreach \x/\y in {1/\two,2/\three}
		\draw (3,0)++(70:\x)++(160:0.1) -- ++(-20:0.2) node[right=0.5mm] {\y };
		\end{tikzpicture}
	}
	\subfloat[]{%IX
		\begin{tikzpicture}[scale=.6]
		%axis
		\draw (-0.5,0) -- (3.5,0);
		\draw (0,0) -- +(110:3cm);
		\draw (0,0) -- +(-70:0.5cm);
		\draw (3,0) -- +(70:3cm);
		\draw (3,0) -- +(-110:0.5cm);
		%ticks
		\foreach \x/\y in {1/\two,2/\one}
		\draw (110:\x)++(20:0.1) -- ++(-160:0.2) node[left=0.5mm] {\y };
		\foreach \x/\y in {1/\two,2/\three}
		\draw (\x,-0.1) -- (\x,0.1) node[below=3mm] {\y };
		\foreach \x/\y in {1/\three,2/\one}
		\draw (3,0)++(70:\x)++(160:0.1) -- ++(-20:0.2) node[right=0.5mm] {\y };
		\end{tikzpicture}
	}
	\subfloat[]{%VII
		\begin{tikzpicture}[scale=.6]
		%axis
		\draw (-0.5,0) -- (3.5,0);
		\draw (0,0) -- +(110:3cm);
		\draw (0,0) -- +(-70:0.5cm);
		\draw (3,0) -- +(70:3cm);
		\draw (3,0) -- +(-110:0.5cm);
		%ticks
		\foreach \x/\y in {1/\two,2/\one}
		\draw (110:\x)++(20:0.1) -- ++(-160:0.2) node[left=0.5mm] {\y };
		\foreach \x/\y in {1/\three,2/\three}
		\draw (\x,-0.1) -- (\x,0.1) node[below=3mm] {\y };
		\foreach \x/\y in {1/\two,2/\one}
		\draw (3,0)++(70:\x)++(160:0.1) -- ++(-20:0.2) node[right=0.5mm] {\y };
		\end{tikzpicture}
	}
	
	\subfloat[]{%XII
		\begin{tikzpicture}[scale=.6]
		%axis
		\draw (-0.5,0) -- (3.5,0);
		\draw (0,0) -- +(110:4cm);
		\draw (0,0) -- +(-70:0.5cm);
		\draw (3,0) -- +(70:4cm);
		\draw (3,0) -- +(-110:0.5cm);
		%ticks
		\foreach \x/\y in {1/\two,2/\one,3/\one}
		\draw (110:\x)++(20:0.1) -- ++(-160:0.2) node[left=0.5mm] {\y };
		\draw (1.5,-0.1) -- (1.5,0.1) node[below=3mm] {\two };
		\foreach \x/\y in {1/\three,2/\three}
		\draw (3,0)++(70:\x)++(160:0.1) -- ++(-20:0.2) node[right=0.5mm] {\y };
		\end{tikzpicture}
	}
	\subfloat[]{%XI
		\begin{tikzpicture}[scale=.6]
		%axis
		\draw (-0.5,0) -- (3.5,0);
		\draw (0,0) -- +(110:4cm);
		\draw (0,0) -- +(-70:0.5cm);
		\draw (3,0) -- +(70:4cm);
		\draw (3,0) -- +(-110:0.5cm);
		%ticks
		\foreach \x/\y in {1/\two,2/\one,3/\one}
		\draw (110:\x)++(20:0.1) -- ++(-160:0.2) node[left=0.5mm] {\y };
		\draw (1.5,-0.1) -- (1.5,0.1) node[below=3mm] {\three };
		\foreach \x/\y in {1/\two,2/\three}
		\draw (3,0)++(70:\x)++(160:0.1) -- ++(-20:0.2) node[right=0.5mm] {\y };
		\end{tikzpicture}
	}
	\subfloat[]{%X
		\begin{tikzpicture}[scale=.6]
		%axis
		\draw (-0.5,0) -- (3.5,0);
		\draw (0,0) -- +(110:4cm);
		\draw (0,0) -- +(-70:0.5cm);
		\draw (3,0) -- +(70:4cm);
		\draw (3,0) -- +(-110:0.5cm);
		%ticks
		\foreach \x/\y in {1/\three,2/\two,3/\one}
		\draw (110:\x)++(20:0.1) -- ++(-160:0.2) node[left=0.5mm] {\y };
		\draw (1.5,-0.1) -- (1.5,0.1) node[below=3mm] {\three };
		\foreach \x/\y in {1/\two,2/\one}
		\draw (3,0)++(70:\x)++(160:0.1) -- ++(-20:0.2) node[right=0.5mm] {\y };
		\end{tikzpicture}
	}
	\caption{Stably marked curves with 6 marked points and three components.}
\end{figure}
\begin{figure}[h] %4 components w points
	\centering
	\subfloat[]{%XIII
		\begin{tikzpicture}[scale=.6]
		%axis
		\draw (0,0) coordinate (O) -- +(160:3.5);
		\draw (O) -- +(-20:0.5);
		\draw (O) -- +(20:3.5);
		\draw (O) -- +(-160:0.5);
		\draw (O)++(160:3) coordinate (A) -- +(110:3cm);
		\draw (A) -- +(-70:0.5cm);
		\draw (O)++(20:3) coordinate (B) -- +(70:3cm);
		\draw (B) -- +(-110:0.5cm);
		%ticks
		\foreach \x/\y in {1/\one,2/\one}
		\draw (A)++(110:\x)++(20:0.1) -- ++(-160:0.2) node[left=0.5mm] {\y };
		\draw (O)++(160:1.5)++(70:0.1) -- ++(-110:0.2) node[below=0.5mm] {\two };
		\draw (O)++(20:1.5)++(110:0.1) -- ++(-70:0.2) node[below=0.5mm] {\two };
		\foreach \x/\y in {1/\three,2/\three}
		\draw (B)++(70:\x)++(160:0.1) -- ++(-20:0.2) node[right=0.5mm] {\y };
		\end{tikzpicture}
	}
	\subfloat[]{%XIV
		\begin{tikzpicture}[scale=.6]
		%axis
		\draw (0,0) coordinate (O) -- +(160:3.5);
		\draw (O) -- +(-20:0.5);
		\draw (O) -- +(20:3.5);
		\draw (O) -- +(-160:0.5);
		\draw (O)++(160:3) coordinate (A) -- +(110:3cm);
		\draw (A) -- +(-70:0.5cm);
		\draw (O)++(20:3) coordinate (B) -- +(70:3cm);
		\draw (B) -- +(-110:0.5cm);
		%ticks
		\foreach \x/\y in {1/\one,2/\one}
		\draw (A)++(110:\x)++(20:0.1) -- ++(-160:0.2) node[left=0.5mm] {\y };
		\draw (O)++(160:1.5)++(70:0.1) -- ++(-110:0.2) node[below=0.5mm] {\two };
		\draw (O)++(20:1.5)++(110:0.1) -- ++(-70:0.2) node[below=0.5mm] {\three };
		\foreach \x/\y in {1/\two,2/\three}
		\draw (B)++(70:\x)++(160:0.1) -- ++(-20:0.2) node[right=0.5mm] {\y };
		\end{tikzpicture}
	}
	
	\subfloat[]{%XVII
		\begin{tikzpicture}[scale=.6]
		%axis
		\draw (0,0) coordinate (O) -- +(160:3.5);
		\draw (O) -- +(-20:0.5);
		\draw (O) -- +(20:3.5);
		\draw (O) -- +(-160:0.5);
		\draw (O)++(160:3) coordinate (A) -- +(110:3cm);
		\draw (A) -- +(-70:0.5cm);
		\draw (O)++(20:3) coordinate (B) -- +(70:3cm);
		\draw (B) -- +(-110:0.5cm);
		%ticks
		\foreach \x/\y in {1/\two,2/\one}
		\draw (A)++(110:\x)++(20:0.1) -- ++(-160:0.2) node[left=0.5mm] {\y };
		\draw (O)++(160:1.5)++(70:0.1) -- ++(-110:0.2) node[below=0.5mm] {\one };
		\draw (O)++(20:1.5)++(110:0.1) -- ++(-70:0.2) node[below=0.5mm] {\three };
		\foreach \x/\y in {1/\two,2/\three}
		\draw (B)++(70:\x)++(160:0.1) -- ++(-20:0.2) node[right=0.5mm] {\y };
		\end{tikzpicture}
	}
	\subfloat[]{%XV
		\begin{tikzpicture}[scale=.6]
		%axis
		\draw (0,0) coordinate (O) -- +(160:3.5);
		\draw (O) -- +(-20:0.5);
		\draw (O) -- +(20:3.5);
		\draw (O) -- +(-160:0.5);
		\draw (O)++(160:3) coordinate (A) -- +(110:3cm);
		\draw (A) -- +(-70:0.5cm);
		\draw (O)++(20:3) coordinate (B) -- +(70:3cm);
		\draw (B) -- +(-110:0.5cm);
		%ticks
		\foreach \x/\y in {1/\two,2/\one}
		\draw (A)++(110:\x)++(20:0.1) -- ++(-160:0.2) node[left=0.5mm] {\y };
		\draw (O)++(160:1.5)++(70:0.1) -- ++(-110:0.2) node[below=0.5mm] {\three };
		\draw (O)++(20:1.5)++(110:0.1) -- ++(-70:0.2) node[below=0.5mm] {\one };
		\foreach \x/\y in {1/\two,2/\three}
		\draw (B)++(70:\x)++(160:0.1) -- ++(-20:0.2) node[right=0.5mm] {\y };
		\end{tikzpicture}
	}
	\subfloat[]{%XVI
		\begin{tikzpicture}[scale=.6]
		%axis
		\draw (0,0) coordinate (O) -- +(160:3.5);
		\draw (O) -- +(-20:0.5);
		\draw (O) -- +(20:3.5);
		\draw (O) -- +(-160:0.5);
		\draw (O)++(160:3) coordinate (A) -- +(110:3cm);
		\draw (A) -- +(-70:0.5cm);
		\draw (O)++(20:3) coordinate (B) -- +(70:3cm);
		\draw (B) -- +(-110:0.5cm);
		%ticks
		\foreach \x/\y in {1/\two,2/\one}
		\draw (A)++(110:\x)++(20:0.1) -- ++(-160:0.2) node[left=0.5mm] {\y };
		\draw (O)++(160:1.5)++(70:0.1) -- ++(-110:0.2) node[below=0.5mm] {\three };
		\draw (O)++(20:1.5)++(110:0.1) -- ++(-70:0.2) node[below=0.5mm] {\three };
		\foreach \x/\y in {1/\two,2/\one}
		\draw (B)++(70:\x)++(160:0.1) -- ++(-20:0.2) node[right=0.5mm] {\y };
		\end{tikzpicture}
	}
	\caption{Stably marked curves with 6 marked points and four components, all containing at least one marked point.}
\end{figure}
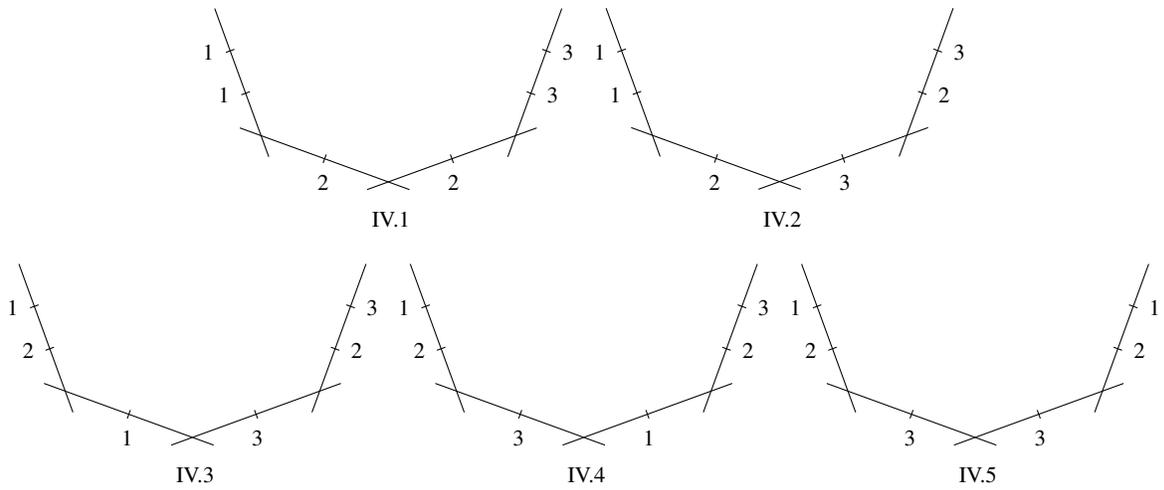

\renewcommand{\thefigure}{IV*}
\begin{figure}[h] %4 components, one empty
	\centering
	\subfloat[]{%XVIII
		\begin{tikzpicture}[scale=.6]
		%axis
		\draw (0.5,0) -- (5.5,0);
		\draw (1,-0.5) -- (1,3);
		\draw (3,-0.5) -- (3,3);
		\draw (5,-0.5) -- (5,3);
		%ticks
		\foreach \x/\y in {1/\one,2/\one}
		\draw (0.9, \x) -- (1.1, \x) node[left=3mm] {\y };
		\foreach \x/\y in {1/\two,2/\two}
		\draw (2.9, \x) -- (3.1, \x) node[left=3mm] {\y };
		\foreach \x/\y in {1/\three,2/\three}
		\draw (4.9, \x) -- (5.1, \x) node[left=3mm] {\y };
		\end{tikzpicture}
	}
	\subfloat[]{%XIX
		\begin{tikzpicture}[scale=.6]
		%axis
		\draw (0.5,0) -- (5.5,0);
		\draw (1,-0.5) -- (1,3);
		\draw (3,-0.5) -- (3,3);
		\draw (5,-0.5) -- (5,3);
		%ticks
		\foreach \x/\y in {1/\one,2/\one}
		\draw (0.9, \x) -- (1.1, \x) node[left=3mm] {\y };
		\foreach \x/\y in {1/\three,2/\two}
		\draw (2.9, \x) -- (3.1, \x) node[left=3mm] {\y };
		\foreach \x/\y in {1/\three,2/\two}
		\draw (4.9, \x) -- (5.1, \x) node[left=3mm] {\y };
		\end{tikzpicture}
	}
	\subfloat[]{%XX
		\begin{tikzpicture}[scale=.6]
		%axis
		\draw (0.5,0) -- (5.5,0);
		\draw (1,-0.5) -- (1,3);
		\draw (3,-0.5) -- (3,3);
		\draw (5,-0.5) -- (5,3);
		%ticks
		\foreach \x/\y in {1/\two,2/\one}
		\draw (0.9, \x) -- (1.1, \x) node[left=3mm] {\y };
		\foreach \x/\y in {1/\three,2/\two}
		\draw (2.9, \x) -- (3.1, \x) node[left=3mm] {\y };
		\foreach \x/\y in {1/\one,2/\three}
		\draw (4.9, \x) -- (5.1, \x) node[left=3mm] {\y };
		\end{tikzpicture}
	}
	\caption{Stably marked curves with 6 marked points and four components, one of which doesn't contain any marked point.\label{figs:dec-graphs-IV}}
\end{figure}
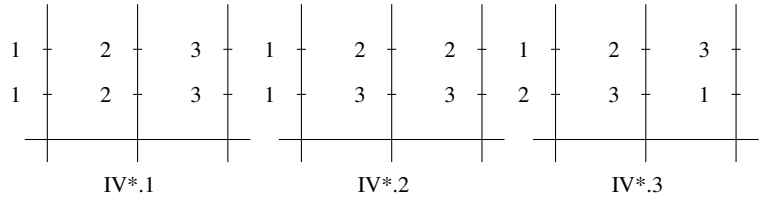

\renewcommand{\thefigure}{B}
\renewcommand{\thesubfigure}{}
\setcounter{figure}{2}
\begin{figure}[!h]
	\centering
	\subfloat[Lop]{
		\begin{tikzpicture}[scale=.6]
		\draw[very thick, dashed] (0,0) to[quick curve through={(2,0.5) (3,1.5) (2,3) (1,1.5) (2,0.5)}]
		(4,0);
		\end{tikzpicture}
	}
	\subfloat[Loop]{
		\begin{tikzpicture}[scale=.6]
		\draw[very thick] (0,0) to[quick curve through={(2,0.5) (3,1.5) (2,3) (1,1.5) (2,0.5) (4,0) (6,0.5) (7,1.5) (6,3) (5,1.5) (6,0.5)}]
		(8,0);
		\end{tikzpicture}
	}
	\subfloat[Looop]{
		\begin{tikzpicture}[scale=.6]
		\draw (0,0) to[quick curve through={(2,0.5) (3,1.5) (2,3) (1,1.5) (2,0.5) (4,0) (6,0.5) (7,1.5) (6,3) (5,1.5) (6,0.5) (8,0) (10,0.5) (11,1.5) (10,3) (9,1.5) (10,0.5)}]
		(12,0);
		\end{tikzpicture}
	}
	
	\subfloat[DNA]{
		\begin{tikzpicture}[scale=.4]
		\draw (-0.5,0.5) to[quick curve through={(1,1) (2,2) (1,3) (0,4) (1,5) (2,6) (1,7)}]
		(-0.5,7.5);
		\draw (2.5,0.5) to[quick curve through={(1,1) (0,2) (1,3) (2,4) (1,5) (0,6) (1,7)}]
		(2.5,7.5);
		\clip (-2.5,0) rectangle (4.5,8);
		\end{tikzpicture}
	}
	\subfloat[Candy]{
		\begin{tikzpicture}[scale=.5]
		\draw[very thick] (-0.5,0.5) to[quick curve through={(2,3)}]
		(-0.5,6.5);
		\draw[very thick] (2.5,0.5) to[quick curve through={(0,3)}]
		(2.5,6.5);
		\clip (-2.5,0) rectangle (4.5,8);
		\end{tikzpicture}
	}
	\subfloat[Cave]{
		\begin{tikzpicture}[scale = .5]
		\draw (0,0) to[quick curve through={(2,3)}]
		(6,4);
		\draw (-1,1) to[quick curve through={(2,0.5)}]
		(6,0);
		\draw (3,-1) to[out angle = 90, in angle = 90, curve through={(4,5)}]
		(5,-1);
		\end{tikzpicture}
	}
	\subfloat[Winky cat]{
		\begin{tikzpicture}[scale = .5]
		\draw (-0.5,0.5) to[quick curve through={(2,3) (-1.5,6.5) (-1.5, 5.5) (-1.5,6.5)}]
		(-3.5,6.5);
		\draw[very thick] (2.5,0.5) to[curve through={(0,3)}]
		(3.5,6.5);
		\end{tikzpicture}
	}
	
	\subfloat[Tree]{
		\begin{tikzpicture}[scale=.5]
		\draw (-0.5,-0.5) to[quick curve through={(2,3)}]
		(-0.5,6.5);
		\draw (2.5,-0.5) to[quick curve through={(0,3)}]
		(2.5,6.5);
		\draw[very thick] (-2,0) to (4,0);
		\end{tikzpicture}
	}
	\subfloat[Grl pwr]{
		\begin{tikzpicture}[scale=.5]
		\draw (0,2) to[curve through={(3,1)}]
		(6,3);
		\draw (4,3) to[curve through={(7,1)}] (10,2);
		\draw (0,1) to[in angle = 120, curve through={(3,2)}]
		(6,0);
		\draw (4,0) to[out angle = 60, curve through={(7,2)}] (10,1);
		\end{tikzpicture}
	}
	\subfloat[Garden]{
		\begin{tikzpicture}[scale=.4]
		\draw (-0.5,-0.5) to[quick curve through={(2,3)}]
		(-0.5,6.5);
		\draw (2.5,-0.5) to[quick curve through={(0,3)}]
		(2.5,6.5);
		\draw (-2,0) to[curve through={(2,0)(4,0)(5,0.5) (5,2) (5,0.5)}] (8,0);
		\end{tikzpicture}
	}
	
	\subfloat[Braid]{
		\begin{tikzpicture}[scale=.5]
		\draw (0,5) to (5,0);
		\draw (0,4.5) to (7,1.5);
		\draw (1,2) to (9,5);
		\draw (3,-1) to (8,5);
		\end{tikzpicture}
	}
	\subfloat[Cat]{
		\begin{tikzpicture}[scale = .5]
		\draw (-0.5,0.5) to[quick curve through={(2,3) (-1.5,6.5) (-1.5, 5.5) (-1.5,6.5)}]
		(-3.5,6.5);
		\draw (2.5,0.5) to[curve through={(0,3) (3.5,6.5) (3.5,5.5) (3.5,6.5)}]
		(5.5,6.5);
		\end{tikzpicture}
	}
	
	\caption{Admissible covers. The genus-2 components correspond to the thick dashed lines, and the genus-1 components correspond to the thick solid lines. The remaining components have genus 0.\label{figs:admissible-covers}}
\end{figure}
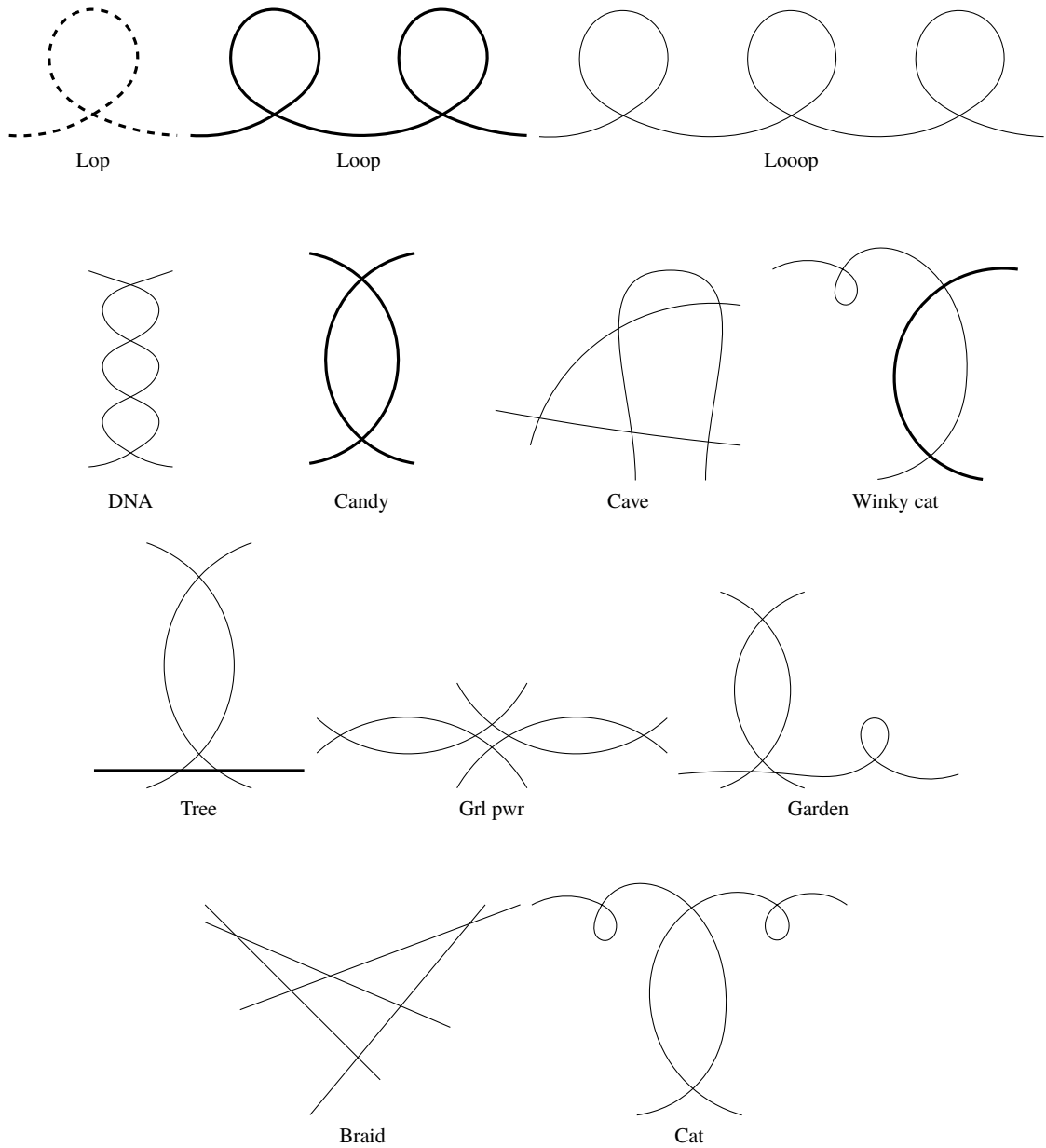

\clearpage

\end{document}